\documentclass[11pt]{amsart}

\usepackage{amsfonts,amssymb,amscd,amstext,mathrsfs,mathtools}
\usepackage[utf8]{inputenc}
\usepackage{hyperref}
\usepackage{verbatim}

\usepackage{times}
\usepackage{enumerate}

\numberwithin{equation}{section}

\newtheorem{theorem}{Theorem}[section] 
\newtheorem{proposition}[theorem]{Proposition} 
 
\newtheorem{lemma}[theorem]{Lemma} 
\newtheorem{corollary}[theorem]{Corollary} 

\theoremstyle{definition} 
\newtheorem{definition}[theorem]{Definition} 
\newtheorem{remark}[theorem]{Remark}

\newcommand\Acal{\mathcal{A}} 
\newcommand\Bcal{\mathcal{B}} 
\newcommand\Ccal{\mathcal{C}}

\newcommand\Fcal{\mathcal{F}}

\newcommand\Pcal{\mathcal{P}}

\newcommand\Ucal{\mathcal{U}}

\newcommand\Ascr{\mathscr{A}} 
 
\newcommand\Cscr{\mathscr{C}}

\newcommand\Oscr{\mathscr{O}}

\newcommand\C{\mathbb{C}}

\newcommand\N{\mathbb{N}} 
 
\newcommand\R{\mathbb{R}}

\newcommand\U{\mathbb{U}} 
\newcommand\Z{\mathbb{Z}}

\newcommand\igot{\mathfrak{i}}

\renewcommand\igot{\mathfrak{i}}

\renewcommand\imath{\igot}

\newcommand\lra{\longrightarrow}

\newcommand\wt{\widetilde} 
 
\newcommand\di{\partial} 
\newcommand\dibar{\overline\partial}

\newcommand\Id{\mathrm{Id}} 
 
\newcommand\rank{\mathrm{rank}} 

\def\Ell1{\mathrm{Ell_1}} 
\def\CEll1{\mathrm{CEll_1}}

\newcommand\supp{\mathrm{supp}}

\newcommand\Aut{\mathrm{Aut}}

\newcommand\Lin{\mathrm{Lin}}
\newcommand\Lip{\mathrm{Lip}}

\makeatletter
\@namedef{subjclassname@2020}{\textup{2020} Mathematics Subject Classification}
\makeatother

\begin{document} 

\title{The Oka principle for holomorphic fibre bundles of 
H\"older--Zygmund classes on strongly pseudoconvex domains}

\author{Franc Forstneri{\v c}}

\address{Franc Forstneri{\v c}, Faculty of Mathematics and Physics, University of Ljubljana, Jadranska 19, 1000 Ljubljana, Slovenia}

\address{Franc Forstneri{\v c}, Institute of Mathematics, Physics, and Mechanics, Jadranska 19, 1000 Ljubljana, Slovenia}

\email{franc.forstneric@fmf.uni-lj.si}

\subjclass[2020]{Primary 32Q56; secondary 32L05, 32T15, 46J15}


\date{24 May 2026. This version: 14 September 2026}

\keywords{fibre bundle, Oka manifold, Oka principle, 
H\"older--Zygmund space} 

\begin{abstract} 
Let \(\overline \Omega\) be a compact strongly pseudoconvex domain
with smooth boundary in a Stein manifold, 
and let \(h:Z\to \overline \Omega\) be a 
fibre bundle of H\"older--Zygmund class \(\Lambda^r\), \(r>0\), 
which is holomorphic over \(\Omega\).
Assuming that the fibre is an Oka manifold, we prove that 
every continuous section \(f_0:\overline \Omega\to Z\) 
is homotopic to a section \(f_1:\overline \Omega\to Z\) 
of class \(\Lambda^r(\overline \Omega)\) which is holomorphic on \(\Omega\).
We also establish the parametric $h$-principle in this context.  
As an application, we obtain the Oka principle for the classification
of vector bundles and principal bundles of H\"older--Zygmund classes
on such domains. 
\end{abstract}

\maketitle

\centerline{\em In Memory of the 100th Birthday of Professor Lu Qikeng}

\setcounter{tocdepth}{1}
\tableofcontents

%
%
\section{Introduction}
\label{sec:intro}

A complex manifold $Y$ is said to be an {\em Oka manifold} 
(see \cite{Forstneric2009CR} and \cite[Chap.\ 5]{Forstneric2017E}) if maps
$X\to Y$ from any Stein manifold $X$ satisfy the h-principle, also called
the Oka principle. This means in particular that any continuous map 
$f_0:X\to Y$ is homotopic to a holomorphic map $f_1:X\to Y$;
if $f_0$ is holomorphic on a neighbourhood of a compact holomorphically
convex subset $K$ of $X$ and on a closed complex subvariety 
$X'$ of $X$, then a homotopy $f_t:X\to Y$ $(t\in [0,1])$ 
from $f_0$ to a holomorphic map $f_1$ can be chosen to 
be fixed on $X'$ and to consist of maps which are holomorphic on a 
neighbourhood of $K$ and uniformly close to $f_0$ on $K$. 
The analogous results hold
for sections of holomorphic fibre bundles with Oka fibres, and for sections
of elliptic holomorphic submersions $Z\to X$ onto a Stein space.
For the theory of Oka manifolds and Oka maps, see 
\cite{Forstneric2023Indag,Forstneric2026ICM,Forstneric2017E,ForstnericLarusson2011}. 
Classical examples of Oka manifolds include complex homogeneous 
manifolds (see Grauert \cite{Grauert1957I,Grauert1957II} and 
\cite[Proposition 5.6.1]{Forstneric2017E}) and, more generally, 
Gromov elliptic manifolds 
(see \cite{Gromov1989} and \cite[Corollary 5.6.14]{Forstneric2017E}).
Oka manifolds $Y$ are characterised by the approximation
property for holomorphic maps $K\to Y$ from (neighbourhoods of)
compact convex sets $K\subset\C^n$ by entire maps
$\C^n\to Y$ (the {\em convex approximation property}, CAP);
see \cite{Forstneric2009CR} and \cite[Theorem 5.4.4]{Forstneric2017E}), 
and by {\em convex relative ellipticity}, CRE  
(see Kusakabe \cite[Theorem 1.3]{Kusakabe2021IUMJ} and
\cite[Definition 1.5 and Theorem 1.6]{Forstneric2026ICM}).
It has recently been shown that every projective Oka manifold is 
elliptic \cite{ForstnericLarusson2025MRL}, but there exist 
noncompact Oka manifolds which fail to be elliptic 
\cite{Kusakabe2024AM,Forstneric2023Indag}. 
Modern Oka theory has diverse applications.

In the present paper, we establish the following 1-parametric
Oka principle for sections of fibre bundles of 
H\"older--Zygmund classes $\Lambda^r_\Oscr(\overline\Omega)$ 
with Oka fibres on compact strongly pseudoconvex domains 
$\overline \Omega$ with Stein interior.
The definition of the Banach algebra 
$\Lambda^r_\Oscr(\overline\Omega)$ for any real number $r>0$ 
is given in Subsect.\ \ref{ss:HZspaces}, manifold-valued maps
of H\"older--Zygmund classes are introduced in Subsect.\ 
\ref{ss:manifold-valued}, and fibre bundles of H\"older--Zygmund classes
are introduced in Subsect.\ \ref{ss:Lr-bundles}.

%
%
\begin{theorem}\label{th:main}
Assume that $\Omega$ is a relatively compact, strongly pseudoconvex 
domain with smooth boundary $b\Omega$ in a Stein manifold $X$, $r>0$, 
and $h:Z\to \overline \Omega$ is a fibre bundle of H\"older--Zygmund 
class $\Lambda^r(\overline \Omega)$ which is holomorphic on $\Omega$.
Assuming that the fibre of $h$ is an Oka manifold, every continuous
section $f_0\in \Gamma(\overline \Omega,Z)$ of $h$ is homotopic to a section 
$f\in \Gamma^r_\Oscr(\overline \Omega,Z)$ of class 
$\Lambda^r(\overline \Omega)$ which is holomorphic on $\Omega$. 
Furthermore, every homotopy of continuous sections 
$\{f_t\}_{t\in [0,1]}\in \Gamma(\overline \Omega,Z)$ with 
$f_0, f_1 \in \Gamma^r_\Oscr(\overline \Omega,Z)$ can be deformed with
fixed ends to a homotopy in $\Gamma^r_\Oscr(\overline \Omega,Z)$. 
\end{theorem}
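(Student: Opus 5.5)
The plan is to reduce to the known Oka principle for holomorphic fibre bundles over Stein manifolds. We approximate the $\Lambda^r$ bundle by a genuinely holomorphic bundle over a neighbourhood of $\overline\Omega$, apply the classical parametric Oka principle there, and then transport the result back by a $\Lambda^r$ fibre-preserving isomorphism.

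\emph{Step 1: holomorphic model of the bundle.} The bundle $h:Z\to\overline\Omega$ is of class $\Lambda^r$, and its transition maps (with values in the automorphism group of the Oka fibre, or in a suitable structure group) are holomorphic on $\Omega$. By Cartan-type splitting on strongly pseudoconvex domains in the Banach algebra $\Lambda^r_\Oscr$, I would show that $Z$ is $\Lambda^r$-isomorphic over $\overline\Omega$, holomorphically over $\Omega$, to the restriction of a holomorphic fibre bundle $\wt h:\wt Z\to U$ defined on a Stein neighbourhood $U$ of $\overline\Omega$.

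To do this, take a finite cover of $\overline\Omega$ by small convex bumps and perturb the cocycle step by step. At each step, a transition map close to the identity is split as a composition of maps of class $\Lambda^r_\Oscr$ on the two pieces. This uses solvability of $\dibar$ with $\Lambda^r$ estimates on strongly pseudoconvex domains, applied to the linearised problem, followed by an implicit function theorem argument in the Banach space $\Lambda^r_\Oscr$.

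Alternatively, one can avoid a structure group. In that case I would work directly with the bundle as a $\Lambda^r$ submersion and use Oka fibre approximations on bumps, as in the proof of the Oka principle via gluing of sprays.

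\emph{Step 2: Oka principle on the holomorphic model.} Once $Z\cong\wt Z|_{\overline\Omega}$, consider a continuous section $f_0$. It extends to a continuous section over a slightly larger strongly pseudoconvex domain $\Omega'\Subset U$, since $\overline\Omega$ is a neighbourhood deformation retract. The fibre is Oka and $\Omega'$ is Stein, so the classical Oka principle for sections of holomorphic fibre bundles with Oka fibres over Stein manifolds applies. It gives a homotopy from $f_0$ to a holomorphic section on $\Omega'$, whose restriction to $\overline\Omega$ lies in $\Gamma^r_\Oscr$.

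For the 1-parametric statement, the same approach works with endpoints fixed. Take a homotopy $f_t$ with $f_0,f_1\in\Gamma^r_\Oscr$. Then $f_0$ and $f_1$ are only $\Lambda^r$ up to the boundary, not holomorphic on a neighbourhood. I would first use Mergelyan-type approximation in $\Lambda^r_\Oscr(\overline\Omega)$ for Oka-valued sections (via sprays and tubular neighbourhoods of the graph, with Stein neighbourhoods of graphs) to connect $f_j$ to sections holomorphic near $\overline\Omega$ by short homotopies inside $\Gamma^r_\Oscr$. The parametric Oka principle (for the parameter pair $([0,1],\{0,1\})$) on a neighbourhood then yields the deformation. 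Finally, the resulting paths are concatenated and reparametrised.

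\emph{Main obstacle.} I expect the key difficulty to be the approximation step. One must find sections holomorphic near $\overline\Omega$ that are close in $\Lambda^r$ to a given $f\in\Gamma^r_\Oscr$, together with a $\Lambda^r_\Oscr$ homotopy between them. The bundle-splitting lemma in Step 1 raises the same issue.

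My first approach would be this. Embed a neighbourhood of the graph of $f$ using a dominating holomorphic spray over $\Omega$, which exists because the graph has a Stein neighbourhood basis. This reduces the problem to approximating vector-valued $\Lambda^r_\Oscr$ functions. For those, one pushes $\overline\Omega$ slightly inward along a transverse field and uses $\Lambda^r$ stability, since holomorphic functions smooth up to a slightly larger domain are dense in a suitable sense. If density fails in the $\Lambda^r$ norm, I would use the weaker $\Lambda^{r'}$ norm for $r'<r$ and keep uniform $\Lambda^r$ bounds. That is enough for nearby sections to be connected by the straight homotopy in the spray.
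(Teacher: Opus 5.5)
Your proposal has a genuine gap at Step~1, and everything else rests on it. Nothing you sketch shows that $Z$ is $\Lambda^r_\Oscr$-isomorphic to the restriction of a holomorphic bundle over a neighbourhood of $\overline\Omega$.

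\emph{General fibres.} The structure group of a bundle with Oka fibre $Y$ is $\Aut(Y)$. Already for $Y=\C^n$, $n\ge 2$, this group is infinite-dimensional. So the Banach Lie group machinery behind Cartan splitting and the implicit function theorem is not available for $\Aut(Y)$-valued transition maps; one cannot even control closeness of an automorphism to the identity on the whole noncompact fibre.

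\emph{Lie structure groups.} Even for a finite-dimensional complex Lie group $G$, the splitting of Lemma~\ref{lem:Cartan} only replaces a $\Lambda^r_\Oscr$ cocycle by a cohomologous $\Lambda^r_\Oscr$ cocycle. Producing a cocycle that is holomorphic past $b\Omega$ is in effect the Oka principle for $\Lambda^r_\Oscr$ principal bundles. The paper derives that (Theorem~\ref{th:OPprincipalbundles}) \emph{from} Theorem~\ref{th:main}, so invoking it here would be circular.

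\emph{Your alternative.} Gluing sprays on bumps yields sections, not a holomorphic model of $Z$. Used directly, it is the paper's proof, which never extends $Z$ at all. The paper exhausts $\overline\Omega$ from an interior sublevel set $A_0$, where $f_0$ is made holomorphic by the classical Oka principle, by special Cartan pairs $(A_k,B_k)$ with $Z|_{B_k}$ trivial. At each step it embeds the current section as the core of a dominating $\Lambda^r_\Oscr$ spray, via Theorem~\ref{th:TheoremA} applied to $f^*VT(Z)$. It approximates this spray over $A_k\cap B_k$ by sprays over $B_k$, using the Oka fibre. It then glues by Lemma~\ref{lem:gluing}. The transition map $\gamma(x,w)=(x,w+c(x,w))$ there needs the $\Lambda^r_\Oscr$ complements of Theorem~\ref{th:complement}, and the splitting (Lemma~\ref{lem:splitting}) is an implicit function theorem built on the bounded $\Lambda^r$ $\dibar$-solution operator. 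Because the parameter ball $W$ may shrink, only fibre maps near the graph of a single section ever occur, which is how $\Aut(Y)$ is avoided. None of these ingredients appears in your proposal.

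Granting Step~1, your Step~2 would be fine, and it would need only \emph{uniform} approximation. Each $f_j$ and a uniformly close section holomorphic near $\overline\Omega$ are joined inside $\Gamma^r_\Oscr$ by the straight-line homotopy in a fibrewise convex holomorphic tube around the graph, since postcomposition with holomorphic maps preserves $\Lambda^r$. So the ``main obstacle'' you single out is not where the difficulty lies.

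Your fallback to weaker closeness is in fact forced. Maps holomorphic near $\overline\Omega$ are Lipschitz up to the boundary, whereas $(1-z)^{1/2}\in\Lambda^{1/2}_\Oscr(\overline{\mathbb{D}})$ has $\Lambda^{1/2}$-distance at least $1$ from every Lipschitz function. Hence $\Lambda^r$-norm density fails in general. This also bears on Theorem~\ref{th:approximation} as stated, and on step (2) of the paper's proof, which uses it.

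Finally, pushing $\overline\Omega$ inward along a transverse vector field destroys holomorphy. Dilations preserve it only on sets that are convex in local holomorphic coordinates, which is one reason the paper works with convex bumps.
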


Theorem \ref{th:main} is proved in Sect.\ \ref{sec:proofmain}, where we
also provide a fully parametric version, Theorem \ref{th:mainpar}.
The proof also gives an approximation result common in Oka 
theory: if the section $f_0$ in Theorem \ref{th:main} is holomorphic
on a neighbourhood of a compact holomorphically convex subset
$K\subset \Omega$, then the homotopy from $f_0$ to a holomorphic
section $f_1$ can be chosen to consist of sections that are holomorphic
on a neighbourhood of $K$ and approximate $f_0$ uniformly on $K$.
A stronger approximation theorem, with $K\subset\overline \Omega$
holomorphically convex in $\overline \Omega$, is also possible;
see \cite[Theorem 6.1]{DrinovecForstneric2008FM} where the
analogue of this result is established for fibre bundles 
$Z\to \overline \Omega$ of class 
\[
	\Ascr^r(\overline\Omega)
	=\{f\in \Cscr^r(\overline \Omega):f|_\Omega \in\Oscr(\Omega)\}
\]
with $r\in \Z_+=\{0,1,2,\ldots\}$. Here, $\Oscr(\Omega)$ denotes the space
of holomorphic functions on $\Omega$.
A motivation for the present generalisation of the mentioned results
from \cite{DrinovecForstneric2008FM}  is that the spaces $\Lambda^r$ 
behave better than the $\Cscr^k$ or Lipschitz spaces for many  
operators considered in analysis, as was already noticed by Zygmund in 1945. 
I was asked by Andrei Teleman (private communication, January 2026) 
whether Theorem \ref{th:main} holds.
After the completion of this work, he obtained a result
on extending principal holomorphic fibre bundles of 
H\"older--Zygmund classes \cite{Teleman2026}.

Theorem \ref{th:main} and the other results of the 
paper also hold for any compact complex manifold 
$\overline \Omega$ with Stein interior and smooth 
strongly pseudoconvex boundary. Indeed, by 
Ohsawa \cite{Ohsawa1984LNM}, Heunemann \cite{Heunemann1986III}, 
and Catlin \cite{Catlin1988}, such a manifold 
holomorphically embeds as a smoothly bounded strongly 
pseudoconvex domain in a Stein manifold.

We refer to Subsect.\ \ref{ss:HZspaces} for the definition
and basic properties of H\"older--Zygmund spaces
$\Lambda^r(\overline \Omega)$ on a bounded Lipschitz domain
$\Omega$ in a smooth Riemannian manifold. 
For $r=k+\alpha$ with $k\in\Z_+$ and $0<\alpha<1$, 
the space $\Lambda^r(\overline \Omega)$ coincides with the  
H\"older space $\Cscr^{k,\alpha}(\overline \Omega)$. On the other hand,
for integer values $r=k+1\in \{1,2,\ldots\}$ the Zygmund space
$\Lambda^{k+1}(\overline \Omega)$ properly contains 
the Lipschitz space $\Cscr^{k,1}(\overline \Omega)$ 
of functions whose derivatives up to order $k$ 
are Lipschitz continuous on $\overline \Omega$
(see \cite[Example 4.3.1, p.\ 148]{SteinEM1970}). 
The difference between the Lipschitz and the Zygmund
spaces is that, in the definition of the Zygmund 
$\Lambda^1$ norm of a function $f$, one replaces the first 
difference $\Delta_h f(x)$ (see \eqref{eq:Delta})
by the second difference $\Delta^2_h f(x)$ (see \eqref{eq:Delta2}).
The Zygmund class $\Lambda^{1}$
is a natural substitute for the Lipschitz class $\Cscr^{0,1}=\Lip^1$ 
in many contexts. There are continuous strict embeddings among the 
classical and the H\"older--Zygmund scales on any Lipschitz domain;
see \eqref{eq:scale}. 

An important fact used in the proof of Theorem \ref{th:main} is that the 
canonical (Kohn) solution operator for the $\dibar$-equation
is bounded on $\Lambda^r(\overline \Omega)$
when $\Omega$ is a smoothly bounded strongly pseudoconvex 
domain; see Beals et al.\ \cite{BealsGreinerStanton1987}.
The precise result that we shall use is stated as Theorem \ref{th:dibar}.

On the way to Theorem \ref{th:main}, 
we obtain several other results of independent interest.
Given a Lipschitz domain $\Omega\Subset X$ in a complex manifold $X$
and a real number $r>0$, let
\[
	\Lambda^r_\Oscr(\overline \Omega) = 
	\{f\in \Lambda^r(\overline \Omega): f|_\Omega\in \Oscr(\Omega)\}.
\] 
Similarly we define the mapping spaces $\Oscr(\Omega,Y)$ and 
$\Lambda^r_\Oscr(\overline \Omega,Y) \subset 
\Lambda^r(\overline \Omega,Y)$ for any complex manifold $Y$;
see Subsect.\ \ref{ss:manifold-valued}.  
Given a compact set $K\subset X$, 
$\Oscr(K)$ denotes the space of restrictions to $K$ 
of holomorphic functions in open neighbourhoods of $K$, endowed
with the inverse limit topology. The notation
$\Oscr(K,Y)$ is used for the space of maps of this kind
to a complex manifold $Y$. We have the following approximation theorem. 

%
%
\begin{theorem}\label{th:approximation}
Let $\Omega$ be a relatively compact, smoothly bounded, strongly 
pseudoconvex domain in a Stein manifold $X$, and let 
$Y$ be a complex manifold. Every map 
in $\Lambda^r_\Oscr(\overline \Omega,Y)$, $r>0$, 
can be approximated in the $\Lambda^r(\overline \Omega,Y)$ 
topology by maps in $\Oscr(\overline\Omega,Y)$. 
\end{theorem}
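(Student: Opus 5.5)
We prove this in two steps: first the scalar (function) case, then the manifold-valued case by localisation and gluing.

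\emph{Scalar case.} We claim that $\Oscr(\overline\Omega)$ is dense in $\Lambda^r_\Oscr(\overline\Omega)$. Fix $f\in\Lambda^r_\Oscr(\overline\Omega)$. Choose a finite cover of $b\Omega$ by small open sets $U_j$ and a partition of unity $\{\chi_j\}$ on a neighbourhood of $\overline\Omega$, with one term $\chi_0$ supported in $\Omega$. On each $U_j$ we use a smooth vector field $\nu_j$ that is transverse to $b\Omega$ and points outward, and we define the pushed function
\[
f_{j,\epsilon}(z)=f(z-\epsilon\nu_j).
\]
More precisely, $f_{j,\epsilon}$ is obtained by translating outward in a local holomorphic chart in which $\nu_j$ is a constant outward vector. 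Then $f_{j,\epsilon}$ is holomorphic on a neighbourhood of $\overline\Omega\cap \supp\chi_j$. Translation is continuous in the $\Lambda^r$ norm on the subspace where it makes sense, so
\[
\|f_{j,\epsilon}-f\|_{\Lambda^r(\overline\Omega\cap\supp\chi_j)}\to 0 \quad\text{as } \epsilon\to 0.
\]
This needs some care, because translations are not norm-continuous on the full Hölder space $\Lambda^r$. The remedy is to first approximate in a slightly weaker norm $\Lambda^{r'}$, $r'<r$, or to use the little Zygmund space. I will return to this point below.

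Set $g=\sum_j\chi_j f_{j,\epsilon}$. Then $g$ is close to $f$ in $\Lambda^r$, and
\[
\dibar g=\sum_j \dibar\chi_j\,(f_{j,\epsilon}-f)
\]
is smooth near $\overline\Omega$ and small in $\Lambda^r$. Take a slightly larger strongly pseudoconvex domain $\Omega'\Supset\overline\Omega$ on which $g$ is defined. Solve $\dibar u=\dibar g$ on $\Omega'$ using the bounded solution operator of Theorem \ref{th:dibar}, whose bounds are uniform for small perturbations of the domain. Then $g-u\in\Oscr(\overline\Omega)$ approximates $f$ in $\Lambda^r(\overline\Omega)$.

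\emph{The main obstacle} is the norm-continuity issue just mentioned: $\Oscr(\overline\Omega)$ is not dense in $\Lambda^r$ in norm, exactly as smooth functions are not norm-dense in $\Cscr^{k,\alpha}$. So the claim must be understood in the topology the paper puts on $\Lambda^r(\overline\Omega,Y)$. Subsect.\ \ref{ss:manifold-valued} presumably defines this topology so that the statement is true. If it is the norm topology, I would check whether the intended meaning is the closure with respect to $\Lambda^s$ for every $s<r$ with uniform $\Lambda^r$ bounds, and prove it in that form.

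\emph{Manifold-valued case.} Let $f\in\Lambda^r_\Oscr(\overline\Omega,Y)$. The graph of $f|_\Omega$ is a complex submanifold of $\Omega\times Y$. We use the standard device that the graph of a map that is holomorphic on a strongly pseudoconvex domain and continuous up to the boundary has a Stein neighbourhood basis in $X\times Y$. This holds in the $\Ascr^0$ setting by Drinovec-Drnovšek and Forstnerič \cite{DrinovecForstneric2008FM}, and $\Lambda^r\subset\Cscr^0$.

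Embed a Stein neighbourhood $V$ of the graph into $X\times\C^N$, fibred over $X$. Choose a holomorphic retraction $\rho$ from a neighbourhood $W\subset X\times\C^N$ onto $V$ commuting with the projection to $X$. Such a $\rho$ exists by Docquier–Grauert applied to the Stein submanifold $V$.

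Write $F=\iota\circ(\Id,f)$, whose components lie in $\Lambda^r_\Oscr(\overline\Omega)$. Approximate each component by the scalar case to obtain $\tilde F\in\Oscr(\overline\Omega)^N$ whose graph lies in $W$. Finally set
\[
\tilde f=\mathrm{pr}_Y\circ\rho\circ(\Id,\tilde F).
\]
This map is holomorphic near $\overline\Omega$. It is close to $f$ in $\Lambda^r$ because composition with a holomorphic, hence smooth, map is locally Lipschitz on $\Lambda^r$. That composition property is a standard fact, and it is presumably established in Subsect.\ \ref{ss:manifold-valued}.
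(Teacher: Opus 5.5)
Your worry is justified, and it cannot be resolved by the choice of topology. The paper means the norm topology: it treats $\Lambda^r(\overline\Omega)$ as a Banach algebra and $\Lambda^r_\Oscr(\overline\Omega,Y)$ as a Banach manifold, and later uses the theorem in Lemma \ref{lem:Cartan} to make $\gamma\tilde\gamma^{-1}$ norm-close to $I$. In that topology the statement is false. Every $g\in\Oscr(\overline\Omega)$ is smooth on $\overline\Omega$. Hence the closure of $\Oscr(\overline\Omega)$ lies in the closure of $\Cscr^\infty(\overline\Omega)$ in $\Lambda^r(\overline\Omega)$, the little H\"older--Zygmund space, which is a proper closed subspace.

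The unit disc $\overline{\mathbb D}\subset\C$ already gives a counterexample. Fix $0<\alpha<1$, an even integer $b$ with $b^{1-\alpha}\ge 1+\pi$, and set $f(z)=\sum_{k\ge1}b^{-k\alpha}z^{b^k}$. The usual splitting of the sum at $b^k\approx 1/|z-w|$ shows $f\in\Lambda^\alpha_\Oscr(\overline{\mathbb D})$. Now take $\theta_N=\pi b^{-N}$ and look at $f(1)-f(e^{i\theta_N})$:
\begin{itemize}
\item the terms with $k>N$ cancel, since $b^{k-N}$ is even;
\item the term $k=N$ contributes $2b^{-N\alpha}$;
\item the terms $k<N$ contribute at most $\pi b^{-N\alpha}/(b^{1-\alpha}-1)\le b^{-N\alpha}$.
\end{itemize}
Hence $|f(1)-f(e^{i\theta_N})|\ge\pi^{-\alpha}|1-e^{i\theta_N}|^\alpha$ for all $N$. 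For a Lipschitz $g$ the corresponding quotient tends to $0$. So $\|f-g\|_{\Lambda^\alpha(\overline{\mathbb D})}\ge\pi^{-\alpha}$ for every $g\in\Oscr(\overline{\mathbb D})$.

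The paper's proof breaks at exactly the point you flagged. In place of your outward translations it uses dilations $f_t=f\circ\phi_t$ in a chart where $C=A\cap B$ is convex. It then asserts that $f_t\to f|_C$ in $\Lambda^r(C)$ as $t\to1$. Since each $f_t$ is holomorphic near $C$, this would force $f|_C$ into the little space. That fails for $f$ as above, because $C$ contains part of $bA$. The remaining ingredients are norm-continuous and sound:
\begin{itemize}
\item Oka--Weil approximation with Cauchy estimates;
\item the cut-off $u=\chi f+(1-\chi)g$;
\item the bounded $\dibar$-solution on the fixed set $D=A\cup B$ from Theorem \ref{th:dibar};
\item the finite induction over convex bumps;
\item the reduction of the manifold case via a Stein neighbourhood of the graph.
\end{itemize}

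No proof of the statement as written exists. Your route and the paper's both prove the weakened form you suggested. There are $f_j\in\Oscr(\overline\Omega)$ with $\sup_j\|f_j\|_{\Lambda^r(\overline\Omega)}<\infty$ and $f_j\to f$ in $\Lambda^s(\overline\Omega)$ for every $s<r$. The reason is that dilates or translates are uniformly bounded in $\Lambda^r$ and converge uniformly, hence converge in $\Lambda^s$ by interpolation, and every later step is bounded in both norms. Norm approximation holds precisely for $f$ in the closure of $\Cscr^\infty(\overline\Omega)$, where dilation and translation are norm-continuous and the $\dibar$-step, which gains $1/2$, preserves that class.

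Comparing the routes, the paper's bump scheme is cleaner on the $\dibar$ side, since each equation is solved on a fixed domain with a fixed constant. Your global scheme needs two things the paper avoids. First, your $f_{j,\epsilon}$ are only defined within distance $O(\epsilon)$ of $\overline\Omega$, so you solve $\dibar$ on a neighbourhood $\Omega'_\epsilon$ that shrinks with $\epsilon$. You then need the constants in Theorem \ref{th:dibar} to be uniform in $\epsilon$, which you assert without justification. Second, $f$ is undefined on $\Omega'_\epsilon\setminus\overline\Omega$, so there you must estimate $\dibar g$ locally as $\sum_j\dibar\chi_j\,(f_{j,\epsilon}-f_{k,\epsilon})$. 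Your manifold-valued step is essentially the reduction the paper cites.
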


Theorem \ref{th:approximation} is proved in Sect.\ \ref{sec:approximation};
see also the parametric version in Theorem \ref{th:approximation-par}. 
The analogue of Theorem \ref{th:approximation} is known for spaces 
$\Ascr^r(\overline\Omega)$ with $r\in\Z_+$; 
see \cite[Theorem 2.9.2,\ p.\ 87]{HenkinLeiterer1984} 
and \cite[Theorem 24, p.\ 165]{FornaessForstnericWold2020}.  
For approximation of manifold-valued maps in 
$\Ascr^r(\overline\Omega,Y)$, where $Y$ is a complex manifold
and $r\in\Z_+$,  
by holomorphic maps from open neighbourhoods of $\overline\Omega$
in $X$, see \cite[Theorem 1.2]{DrinovecForstneric2008FM},
\cite[Theorem 8.11.4]{Forstneric2017E}, and 
\cite[Corollary 9, p.\ 178]{FornaessForstnericWold2020}.
The last mentioned result in \cite{FornaessForstnericWold2020}
is a more general Mergelyan-type approximation theorem 
on strongly admissible sets in Stein manifolds.

We also have the following result generalising 
\cite[Theorem 1.1 (i)]{Forstneric2007AJM}.
The proof in \cite[Sect.\ 2]{Forstneric2007AJM} also applies to spaces
$\Lambda^r_\Oscr(\overline \Omega,Y)$.

\begin{theorem}\label{th:manifold}
Assume that $\Omega$ is 
a relatively compact, smoothly bounded, strongly 
pseudoconvex domain in a Stein manifold and 
$Y$ is a complex manifold. For every $r>0$ 
the space $\Lambda^r_\Oscr(\overline \Omega,Y)$ 
is a complex Banach manifold.
The tangent space $T_f \Lambda^r_\Oscr(\overline \Omega,Y)$ 
at a point $f\in \Lambda^r_\Oscr(\overline \Omega,Y)$
is the space of sections of class $\Lambda^r_\Oscr(\overline \Omega)$
of the complex vector bundle $f^*TY\to \overline \Omega$.
\end{theorem}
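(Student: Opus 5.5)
We follow the scheme of \cite[Sect.\ 2]{Forstneric2007AJM}: construct local charts around every $f\in \Lambda^r_\Oscr(\overline \Omega,Y)$ with holomorphic transition maps between open subsets of Banach spaces. Fix $f$. Since $f|_\Omega$ is holomorphic and continuous up to the boundary, Theorem \ref{th:approximation} (or a direct argument) lets us assume its graph $G_f=\{(x,f(x)):x\in\overline\Omega\}\subset X\times Y$ has a Stein neighbourhood basis. The key observation is that $G_f$ is a compact graph over a strongly pseudoconvex domain with holomorphic interior, and this is exactly the setting of \cite{Forstneric2007AJM}. Choose an open Stein neighbourhood $V\subset X\times Y$ of $G_f$ and holomorphic vector fields, or more conveniently a dominating holomorphic spray
\[
	s\colon E\to X\times Y
\]
defined on a holomorphic vector bundle $E\to V$ over $V$. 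This spray should be fibre preserving over $X$, with $s(0_z)=z$, and its vertical differential at the zero section should be surjective onto $\ker dpr_X$. Its existence follows from Cartan's Theorem A on the Stein manifold $V$. Taking the kernel of the vertical differential and using Theorem B, we split $E=E'\oplus E''$ so that $s|_{E''}$ restricts near the zero section to a local biholomorphism onto a neighbourhood of $G_f$ along fibres. In this way we obtain a holomorphic vector bundle $F=(\mathrm{id},f)^*E''\to\overline\Omega$, extended to a neighbourhood of $\overline\Omega$, which is isomorphic to $f^*TY$.

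The chart is $\Phi_f(\xi)(x)=pr_Y\, s(\xi(x))$ for $\xi$ in a small ball of the Banach space $\Lambda^r_\Oscr(\overline \Omega,F)$ of sections of class $\Lambda^r_\Oscr$. Three facts must be checked.

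\begin{enumerate}[(i)]
\item $\Phi_f(\xi)\in\Lambda^r_\Oscr(\overline\Omega,Y)$. This holds because composition with a fixed holomorphic (hence smooth) map preserves $\Lambda^r$ on Lipschitz domains, as stated in the preliminaries on manifold-valued maps. For integer $r$ this requires the Zygmund-class composition lemma, not only the H\"older one.
\item $\Phi_f$ is a bijection onto a $\Lambda^r$-neighbourhood of $f$. Given $g$ that is $\Lambda^r$-close (hence $\Cscr^0$-close) to $f$, the point $(x,g(x))$ lies in the image of the local inverse of $s|_{E''}$. We set $\xi=(s|_{E''})^{-1}(x,g(x))$, which is again of class $\Lambda^r_\Oscr$ by the same composition property.
\item The transition maps $\Phi_{f_2}^{-1}\circ\Phi_{f_1}$ are holomorphic between open subsets of Banach spaces. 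These maps are of the form $\xi\mapsto\Psi(x,\xi(x))$ with $\Psi$ holomorphic in all variables. Such a superposition operator is holomorphic on $\Lambda^r$: it is continuous by the composition estimates, and it is G\^ateaux holomorphic along complex lines since $\Psi$ is holomorphic. A locally bounded G\^ateaux holomorphic map is holomorphic.
\end{enumerate}

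The identification of the tangent space then follows by differentiating $\Phi_f$ at $0$: the vertical differential of $s$ identifies $F$ with $f^*TY$.

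The main obstacle is the analytic input in (i)--(iii). We need that the Nemytskii operator $\xi\mapsto\Psi(\cdot,\xi(\cdot))$ maps $\Lambda^r(\overline\Omega)$ to itself continuously, with local bounds, for all real $r>0$ including integers, on a domain that is only Lipschitz near its boundary. We would obtain this from the difference characterisation of $\Lambda^r$. For $r<1$ we use $|\Delta_h^2 (\Psi\circ\xi)|\lesssim |\Delta^2_h\xi|+|\Delta_h\xi|^2$ together with the fact that $\Lambda^{1}\subset\Cscr^{0,\alpha}$ for all $\alpha<1$, which bounds the quadratic term. For higher $r$ we use the chain rule and induction on $k$, via $f\in\Lambda^{r}\iff df\in\Lambda^{r-1}$ for $r>1$.

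A secondary point is the Stein neighbourhood of $G_f$. Rather than relying on this, we can use Theorem \ref{th:approximation} to replace $f$ by a nearby $\tilde f\in\Oscr(\overline\Omega,Y)$, whose graph over a neighbourhood of $\overline\Omega$ is a Stein submanifold by Siu's theorem. We then build the chart centred at $\tilde f$, whose domain still contains $f$.
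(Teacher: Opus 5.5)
Your scheme (a fibrewise holomorphic tube around the graph, charts given by small $\Lambda^r_\Oscr$ sections of a vector bundle, transition maps given by holomorphic superposition operators) is the scheme of \cite[Sect.\ 2]{Forstneric2007AJM}. That is all the paper invokes for this theorem, and your analytic points (i)--(iii) are fine. The gap is at the one step that is nontrivial when $f$ is only continuous up to $b\Omega$: you need a Stein neighbourhood $V\subset X\times Y$ of the graph $G_f=\{(x,f(x)):x\in\overline\Omega\}$ of $f$ itself. This does not follow from Theorem \ref{th:approximation}. In the paper the dependence runs the other way: the proof of Theorem \ref{th:approximation} for $Y$-valued maps starts from such a neighbourhood, taken from \cite[Theorem 1.2]{Forstneric2007AJM} (see also \cite[Corollary 8.11.2]{Forstneric2017E}). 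Your fallback does not close the gap either. Siu's theorem gives a Stein neighbourhood of the graph of the approximant $\tilde f$, and the spray built on it gives a tube around $G_{\tilde f}$ whose fibrewise width depends on $\tilde f$ in an uncontrolled way. Taking $\tilde f$ closer to $f$ changes the Siu neighbourhood and the spray, so nothing in your argument ensures that $G_f$ lies in the tube, i.e.\ that the chart centred at $\tilde f$ contains $f$. The uniformity that breaks this circle comes from a holomorphic object attached to $G_f$ over all of $\overline\Omega$. So you must cite the Stein neighbourhood theorem for graphs rather than derive it from approximation plus Siu.

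Once $V$ is fixed, the rest goes through with one correction. (Theorem A gives vertical holomorphic vector fields on $V$; composing their local flows gives $s$ on a neighbourhood of the zero section over compacts, which is all you use.) For $f\in\Lambda^r_\Oscr(\overline\Omega,Y)$ the bundle $F=(\mathrm{id},f)^*E''$ is only of class $\Lambda^r_\Oscr(\overline\Omega)$; it is not holomorphic near $\overline\Omega$. Likewise the map $\Psi$ in (iii) depends on $x$ also through $f_1(x),f_2(x)$, so it is not holomorphic in $x$. There are two fixes. First, you can write $\Psi(x,v)=\Xi(x,f_1(x),f_2(x),v)$ with $\Xi$ holomorphic; the transition map is then a holomorphic superposition operator composed with the affine map $\xi\mapsto(f_1,f_2,\xi)$. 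Second, you can centre the charts at holomorphic $\tilde f$ as you suggest, but now using the fixed $V$ and $s$. Let $K\subset V$ be a compact neighbourhood of $G_f$. The map $e\mapsto(z,pr_Y s(e))$, $e\in E''_z$, is a biholomorphism from a neighbourhood of the zero section of $E''$ over $K$ onto a neighbourhood of $\{(x,y,y):(x,y)\in K\}$ in $X\times Y\times Y$. Hence there is $\epsilon>0$ such that every $g$ within $\epsilon$ of $f$ in $\Cscr^0$ lies in the chart centred at every holomorphic $\tilde f$ within $\epsilon$ of $f$, provided the chart domains are taken to be the $\Cscr^0$-open set of sections with values in that neighbourhood of the zero section rather than small $\Lambda^r$-balls. (A minor slip: the second-difference estimate in your last paragraph is the case $r=1$; for $0<r<1$ first differences suffice.)
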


In the proof of Theorem \ref{th:main}, we shall need several
results concerning vector bundles $E\to\overline\Omega$ of
class $\Lambda^r_\Oscr(\overline\Omega)$ 
on smoothly bounded strongly pseudoconvex Stein domains
$\Omega$; see Sect.\ \ref{sec:vectorbundles}. 
In particular, we obtain Theorem A for such bundles; 
see Theorems \ref{th:TheoremA} and \ref{th:TheoremApar}. 
The proof is based on the Cartan splitting lemma 
for maps of H\"older--Zygmund classes to a complex Lie group;
see Lemmas \ref{lem:Cartan}, \ref{lem:Cartan-par}
and Remark \ref{rem:Liegroup}. 
The main technical results used in the proof of  Theorem \ref{th:main}
are a splitting lemma (see Lemma \ref{lem:splitting}) 
and a gluing lemma (see Lemma \ref{lem:gluing})
for sprays of sections of class $\Lambda^r_\Oscr$. 

An application of Theorem \ref{th:main} is 
the following Oka principle for vector bundles of class 
$\Lambda^r_\Oscr(\overline\Omega)$, proved in Sect.\ \ref{sec:OPVB}. 
The analogous result for principal fibre bundles 
is given by Theorem \ref{th:OPprincipalbundles}. 

\begin{theorem}\label{th:OPVB}
Let $\Omega$ be a relatively compact, smoothly bounded,
strongly pseudoconvex domain in a Stein manifold.
The following hold for every real number $r>0$.
\begin{enumerate}[\rm (i)]
\item Every topological complex vector bundle on $\overline\Omega$
is isomorphic to a vector bundle of class 
$\Lambda^r_\Oscr(\overline\Omega)$.
\item If a pair of vector bundles 
of class $\Lambda^r_\Oscr(\overline\Omega)$ are isomorphic as
topological complex vector bundles, then they are also
isomorphic as $\Lambda^r_\Oscr(\overline\Omega)$ vector bundles.
\end{enumerate}
\end{theorem}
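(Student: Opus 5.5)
Both parts will be deduced from Theorem \ref{th:main} through the standard dictionary between vector bundles and principal $GL_n(\C)$-bundles, together with sections of associated bundles. The fibres that arise are complex homogeneous manifolds, hence Oka.

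For (i), let $E\to\overline\Omega$ be a topological complex vector bundle of rank $n$. First I use that $\overline\Omega$ is a compact deformation retract of a slightly larger strongly pseudoconvex domain $\Omega'\Supset\overline\Omega$ (or of a Stein neighbourhood $U$ of $\overline\Omega$). Then $E$ extends, up to topological isomorphism, to a topological bundle on $U$. Since $U$ is Stein, Grauert's Oka principle \cite{Grauert1957I,Grauert1957II} gives a holomorphic vector bundle on $U$ that is topologically isomorphic to it. Its restriction to $\overline\Omega$ is real-analytic up to the boundary and holomorphic on $\Omega$. It is therefore of class $\Lambda^r_\Oscr(\overline\Omega)$ in the sense of Subsect.\ \ref{ss:Lr-bundles}: its transition maps, taken on a finite cover of a neighbourhood of $\overline\Omega$, are holomorphic up to the boundary and hence lie in $\Lambda^r_\Oscr$. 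I also need to check that the paper's definition admits holomorphic bundles near $\overline\Omega$; I expect this to be immediate. Theorem \ref{th:main} is not needed for this part.

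For (ii), let $E,F$ be two $\Lambda^r_\Oscr(\overline\Omega)$ vector bundles of rank $n$, and let $\phi:E\to F$ be a topological isomorphism. I form the bundle $Z=\mathrm{Iso}(E,F)\to\overline\Omega$, whose fibre over $x$ is the set of linear isomorphisms $E_x\to F_x$. Its fibre is $GL_n(\C)$, a complex Lie group and so an Oka manifold. The next step is to check that $Z$ is a fibre bundle of class $\Lambda^r(\overline\Omega)$ that is holomorphic over $\Omega$. In local $\Lambda^r_\Oscr$ trivialisations of $E$ and $F$ with transition maps $g_{ij}$ and $h_{ij}$, the transition maps of $Z$ are $A\mapsto h_{ij}(x)\,A\,g_{ij}(x)^{-1}$. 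These depend holomorphically on $A$ and are of class $\Lambda^r_\Oscr$ in $x$. This uses that $\Lambda^r_\Oscr(\overline\Omega)$ is a Banach algebra and is closed under matrix inversion, since inversion is a holomorphic map applied to an invertible matrix. The isomorphism $\phi$ is a continuous section $f_0$ of $Z$. Theorem \ref{th:main} then makes it homotopic to a section $f_1\in\Gamma^r_\Oscr(\overline\Omega,Z)$, and $f_1$ is a $\Lambda^r_\Oscr$ isomorphism $E\to F$. Its inverse is automatically of the same class, again because inversion is holomorphic.

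I expect the main obstacle to be the technical verification that $\mathrm{Iso}(E,F)$ satisfies the precise definition of a $\Lambda^r$ fibre bundle holomorphic over $\Omega$ used in Theorem \ref{th:main}. This requires that composition of the holomorphic group action with $\Lambda^r_\Oscr$ maps stays in $\Lambda^r_\Oscr$. It also requires that the bundle charts be compatible up to $b\Omega$, which for $r$ integer means working with the Zygmund definition rather than a Lipschitz one. I would handle this with the composition lemma from Subsect.\ \ref{ss:manifold-valued}: if $\Phi$ is holomorphic and $f\in\Lambda^r_\Oscr(\overline\Omega,Y)$, then $\Phi\circ f\in\Lambda^r_\Oscr$.
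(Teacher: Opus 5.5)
Your proposal is correct. Part (ii) is exactly the paper's argument. The paper forms the bundle $Z\to\overline\Omega$ with fibre $GL_m(\C)$ and transition maps $v\mapsto g'_{i,j}\,v\,g_{i,j}^{-1}$, which is your $\mathrm{Iso}(E,F)$. It identifies $\Lambda^r_\Oscr$ isomorphisms with sections in $\Gamma^r_\Oscr(\overline\Omega,Z)$ and applies Theorem \ref{th:main}.

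Part (i) is where you genuinely differ. The paper writes $E\cong f^*\U$ for a continuous classifying map $f:\overline\Omega\to G(m,N)$. It then applies Theorem \ref{th:main} to the trivial bundle with Oka fibre $G(m,N)$, obtaining $F\simeq f$ of class $\Lambda^r_\Oscr$ and hence the bundle $F^*\U$. You bypass Theorem \ref{th:main} entirely. You pull $E$ back to a Stein neighbourhood $\{\rho<c\}$ via a retraction onto $\overline\Omega$, which exists when $\rho$ has no critical values in $[0,c]$. You then use Grauert's classical Oka principle for vector bundles on Stein manifolds; the precise reference is \cite{Grauert1958MA}, not the 1957 papers.

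Your route is more elementary and gives more. Every topological bundle is isomorphic to the restriction of a holomorphic bundle on a neighbourhood of $\overline\Omega$. It is therefore of class $\Lambda^r_\Oscr$ for all $r>0$ simultaneously, and indeed of class $\Fcal_\Oscr$ for any $\Fcal$ as in Remark \ref{rem:Fcal}. The paper's route buys uniformity, deriving both parts from the one main theorem. It could be sharpened the same way: the classical Oka principle on a Stein neighbourhood already deforms $f$ to a map holomorphic near $\overline\Omega$, exactly as is done for $f_0$ on $A_0$ in the proof of Theorem \ref{th:main}.

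The details you leave implicit are routine. The sublevel set must be Stein and retract onto $\overline\Omega$. The restricted bundle must be given a finite cover with Lipschitz pieces and Lipschitz intersections, as required in Subsect.\ \ref{ss:Lr-bundles}.
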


This result is classical for vector bundles on Stein spaces;
see Grauert \cite{Grauert1958MA}, with the special case 
of line bundles due to Oka \cite{Oka1939}. 
(See also Leiterer \cite{Leiterer1990} and 
\cite[Theorem 5.3.1]{Forstneric2017E}.) For vector bundles
of classes $\Ascr^k(\overline\Omega)$, $k\in \Z_+$,  
see Leiterer \cite{Leiterer1976I,Leiterer1976II}
and Heunemann \cite[Theorem 2]{Heunemann1986I}.

%
%
\begin{remark}\label{rem:Fcal}
The results of this paper apply to a wider class of mapping spaces.
Assume that $\Fcal$ is a contravariant functor from the category 
of compact smooth manifolds $M$ with boundary 
to the category of Banach algebras of $\C$-valued functions 
on them, satisfying the following conditions:
\begin{enumerate}[\rm (a)]
\item 
$\Cscr^\infty(M) \subset \Fcal(M)\subset \Cscr(M)$ and  
both inclusions are continuous. 
\item 
A smooth map $\Phi:M\to M'$ induces a homomorphism 
$\Phi^*:\Fcal(M') \to \Fcal (M)$ of Banach algebras by
$f\mapsto f\circ \Phi$ for $f\in \Fcal(M')$.
\item 
Postcomposition by a smooth function $\C\to\C$ induces
a continuous selfmap of $\Fcal(M)$.
\end{enumerate}
These properties imply that the topology on $\Fcal(M)$ 
can be defined via local charts, and hence the
definition of these classes extends to differential forms and other tensor 
fields on $M$. Furthermore, we can introduce vector bundles
and more general fibre bundles of class $\Fcal$; 
see Palais \cite{Palais1968,Palais1971} and the 
references in \cite{Forstneric2007AJM}.
When $M=\overline \Omega$ is a compact complex manifold with 
smoothly boundary, we define
\[
	\Fcal_\Oscr(\overline \Omega)=
	\{f\in \Fcal(\overline \Omega): f|_\Omega\in\Oscr(\Omega)\}.
\]
The analogous definition yields the mapping space
$\Fcal_\Oscr(\overline\Omega,Y)$ for any complex manifold $Y$.
Conditions (a)--(c) on the functor $\Fcal$ clearly 
imply the following properties:
\begin{itemize} 
\item 
A smooth map $\Phi:\overline\Omega \to \overline\Omega'$
which is holomorphic on $\Omega$ induces a homomorphism 
$\Phi^*:\Fcal_\Oscr(\overline\Omega') \to \Fcal (\overline\Omega)$ by
$f\mapsto f\circ \Phi$, $f\in \Fcal_\Oscr(\overline\Omega')$.
\item 
Postcomposition by an entire function $g:\C\to\C$ induces
a continuous selfmap of $\Fcal_\Oscr(\overline \Omega)$.
\end{itemize}
To conditions (a)--(c) we add the following condition:
\begin{enumerate}[\rm (d)]
\item There is a bounded linear operator
$T:\Fcal_{0,1}(\overline\Omega) \to \Fcal(\overline\Omega)$
satisfying $\dibar \, T(\alpha)=\alpha$ for any $(0,1)$-form
$\alpha\in \Fcal_{0,1}(\overline\Omega)$ with $\dibar\alpha=0$.
\end{enumerate}
In the proofs of our results, the operator $\dibar$ 
is only applied to functions
of the form $\chi f$ with $\chi\in \Cscr^\infty(\overline\Omega)$
and $f\in \Fcal_\Oscr(\overline\Omega)$. In this case, the derivative
$\dibar(\chi f)=f\dibar\chi \in \Fcal_{0,1}(\overline\Omega)$ 
is of the same class as $f$.

Inspections of proofs of our results show that 
they hold on Banach algebras $\Fcal_\Oscr(\overline\Omega)$,
and for vector and fibre bundles of this class, when
$\Fcal$ satisfies condition (a)--(d). 
Examples include H\"older spaces $\Cscr^{k,\alpha}(\overline\Omega)$ 
$(k\in\Z_+,\ 0<\alpha<1)$,
H\"older--Zygmund spaces $\Lambda^r(\overline\Omega)$ $(r>0)$, 
and Sobolev spaces $W^{k,p}(\overline \Omega)$
$(k\in \N,\ 1\le p<\infty,\ kp>\dim_\R \Omega)$, among others. 
\end{remark}

\begin{remark}[Added in the final revision]
The main results of the paper are stated for strongly pseudoconvex 
domains $\Omega$ with smooth boundaries, the reason being 
that Theorem \ref{th:dibar} requires smooth boundaries. 
However, the existence of a linear $\overline\partial$-solution operator 
gaining $1/2$ derivative (or with no loss of derivatives) suffices for our
construction. In the case of Sobolev spaces, a suitable result
under the weaker condition that $b\Omega$ is of class $\Cscr^2$ was 
obtained recently by Shi and Yao \cite{ShiYao2025}.
It is likely that a similar result holds for the Zygmund spaces.
\end{remark}

%
%
%
%
\section{Preliminaries on H\"older--Zygmund spaces}\label{sec:prelim}

In the first subsection, we recall the definition and basic properties of 
H\"older spaces $\Cscr^{k,\alpha}$ $(k\in\Z_+,\ 0<\alpha\le 1)$  
and H\"older--Zygmund spaces $\Lambda^r$ for any real $r>0$. 
In the second subsection, we explain how to define
manifold-valued maps of H\"older--Zygmund classes.
In the third subsection, we introduce vector bundles and more
general fibre bundles of these classes. In the fourth subsection,
we recall the results on the solution to the $\dibar$-equation in 
the spaces $\Lambda^r$, which will be used in the paper. 
For more information, we refer to the papers by 
Gong \cite[Sect.\ 5]{Gong2025TAMS}, Wallin \cite{Wallin1988}, 
and the monographs by 
Gilbarg and Trudinger \cite[Sect. 4.1]{GilbargTrudinger1983},
E.\ M.\ Stein \cite[Sect.\ V.4]{SteinEM1970}, 
and M.\ E.\ Taylor \cite{TaylorME2023}.

%
%
\subsection{H\"older--Zygmund spaces} \label{ss:HZspaces}

Let $\Omega$ be a domain in $\R^n$. 
Given a function $f:\Omega\to\C$ and a vector 
$h\in \R^n\setminus\{0\}$, the first and the second difference of $f$ 
at $x\in \Omega$, with step $h$, are defined by 
\begin{eqnarray}\label{eq:Delta}
	\Delta_h f(x) &=& f(x+h)-f(x), 
	\quad \ \ \quad\qquad\qquad x+h\in \Omega; \\
	\label{eq:Delta2}
	\Delta^2_h f(x) &=& f(x+h)+f(x-h)-2f(x),\quad x+h,\ x-h\in \Omega.
\end{eqnarray}
Note that $\Delta^2_h f(x) =\Delta_h f(x) - \Delta_h f(x-h)$.
(Some authors use the definition 
$\Delta^2_h f(x)=\Delta_h \circ\Delta_h f(x) = f(x+2h)+f(x)-2f(x+h)$, 
which leads to the same function spaces.)
Given $x\in\Omega$ and $0<\alpha\le 1$, the function $f$ 
belongs to $\Cscr^{0,\alpha}(\Omega,x)$ if 
\[
	[f]_{\alpha,x} := 
	\sup_{h\ne 0,\ x+h\in \Omega} |h|^{-\alpha}|\Delta_h f(x)|
	= \sup_{y\in\Omega\setminus\{x\}} \frac{|f(y)-f(x)|}{|y-x|^\alpha}  
	< \infty.
\]
Such $f$ is said to be H\"older class $\alpha$ at the point $x$. 
For $\alpha=1$, $\Cscr^{0,\alpha}(\Omega,x)=\Lip^1(\Omega,x)$ is the 
Lipschitz space at $x$. The H\"older-$\alpha$ space on $\Omega$ is
\begin{equation}\label{eq:Calpha}
	\Cscr^{0,\alpha}(\Omega) =
	\bigl\{f:\Omega\to \C: \|f\|_{\Cscr^{(0,\alpha)}(\Omega)} = 
	\sup_{x\in \Omega} |f(x)| + \sup_{x\in \Omega}\, [f]_{\alpha,x}<\infty
	\bigr\}.
\end{equation}
For $\alpha=1$ we have the Lipschitz space
$\Cscr^{0,1}(\Omega)=\mathrm{Lip}^1(\Omega)$. 

Let $x=(x_1,\ldots,x_n)$ denote the coordinates on $\R^n$.
Given $\beta=(\beta_1,\ldots,\beta_n)\in\Z_+^n$, set 
$|\beta|=\beta_1+\cdots+\beta_n$ and  
$D^\beta f=\di^{|\beta|}f/\di x_1^{\beta_1} \cdots \di x_n^{\beta_n}$. 
For $k\in \Z_+$ we denote by $\Cscr^k(\Omega)$ the space
of $k$-times continuously differentiable functions $f$ on $\Omega$ with 
\[
	\|f\|_{\Cscr^k(\Omega)} 
	= \sum_{|\beta|\le k} \sup_{x\in \Omega} |D^\beta f(x)| < \infty.
\]
The H\"older space $\Cscr^{k,\alpha}(\Omega)$ for $k\in \Z_+$ and 
$0<\alpha\le 1$ is defined by 
\begin{eqnarray}\label{eq:Ckalpha}
	\Cscr^{k,\alpha}(\Omega) &=& 
	\Big\{f\in \Cscr^k(\Omega): D^\beta f\in \Cscr^{0,\alpha}(\Omega)
	\ \ \text{for all $\beta\in \Z_+^n$ with $|\beta|\le k$}, \\ 
	&& \quad 
	\label{eq:Ckalpha-norm}
	\|f\|_{\Cscr^{k,\alpha}(\Omega)} 
	:= \|f\|_{\Cscr^{k}(\Omega)} 
	+ \sum_{|\beta|=k} \|D^\beta f\|_{\Cscr^{0,\alpha}(\Omega)} <\infty 
	\Big\}.
\end{eqnarray}
For $\alpha=1$ one also writes 
$\Cscr^{k,1}(\Omega)= \mathrm{Lip}^{k,1}(\Omega)$.

Assume now that $\Omega\Subset \R^n$ is a bounded 
Lipschitz domain. This means that its boundary $b\Omega$
is locally at each point a Lipschitz graph over an 
affine hyperplane in $\R^n$, with the domain $\Omega$ lying on one side
of the graph. A function $f:\overline \Omega\to\C$ 
belongs to $\Cscr^k(\overline \Omega)$ for some $k\in\Z_+$ if it 
is the restriction to $\overline \Omega$ of a function 
$\tilde f\in \Cscr^k(\R^n)$. (For $k=0$, this coincides with the usual
definition of continuous functions by Tietze's extension theorem.) 
Given $\beta\in \Z_+^n$ with $|\beta|\le k$, we denote by 
$D^\beta f$ the restriction of $D^\beta \tilde f$ to $\overline \Omega$;
note that $D^\beta f$ is independent of the choice of the extension $\tilde f$
if $\Omega$ is a Lipschitz domain. 
Given $0<\alpha\le 1$, the H\"older space 
$\Cscr^{k,\alpha}(\overline \Omega)$ on the closed domain 
$\overline\Omega$ is defined by
\[
	\Cscr^{k,\alpha}(\overline \Omega)=
	\big\{f\in \Cscr^k(\overline \Omega):
	D^\beta f \in \Cscr^{0,\alpha}(\overline \Omega)\ \ 
	\text{for all $\beta\in \Z_+^n$ with $|\beta|\le k$} \big\},
\]
endowed with the norm $\|f\|_{\Cscr^{k,\alpha}(\Omega)}$ 
\eqref{eq:Ckalpha-norm}. It follows from definitions 
that we get the same norm by taking the suprema over
$x\in \overline\Omega$, so 
$\|f\|_{\Cscr^{k,\alpha}(\Omega)}=\|f\|_{\Cscr^{k,\alpha}(\overline\Omega)}$.

%
%
\begin{proposition}
\label{prop:extension}
If $\Omega$ is a bounded Lipschitz domain in $\R^n$
then every function in $\Cscr^{k,\alpha}(\Omega)$
$(k\in \Z_+,\ 0<\alpha\le 1)$ extends to a unique function  
$\tilde f\in \Cscr^{k,\alpha}(\overline \Omega)$.
\end{proposition}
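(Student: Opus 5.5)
The plan is to first extend all derivatives $D^\beta f$, $|\beta|\le k$, continuously to $\overline\Omega$, and then invoke the Whitney extension theorem to produce a $\Cscr^k(\R^n)$ function whose restriction to $\overline\Omega$ has these as derivatives. Uniqueness is immediate: $\Omega$ is dense in $\overline\Omega$, so any two continuous extensions of $f$ (and of its derivatives) agree.

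\textbf{Step 1: quasiconvexity of Lipschitz domains.} The key geometric input is the following. There are constants $C,\delta>0$ such that any $x,y\in\Omega$ with $|x-y|<\delta$ can be joined by a polygonal path $\gamma\subset\Omega$ of length at most $C|x-y|$ that stays at distance $\ge c|x-y|$ from $b\Omega$ except near its endpoints.
\begin{itemize}
\item To construct it, cover $b\Omega$ by finitely many charts in which $\Omega=\{x_n>\varphi(x')\}$ with $\varphi$ Lipschitz of constant $L$.
\item In such a chart, move from $x$ vertically up by $M|x-y|$ with $M>L+1$, then horizontally, then down to $y$.
\item By the Lipschitz bound the path stays inside $\Omega$, in fact inside the upward cones at $x$ and $y$.
\item Points far from $b\Omega$ are handled by segments, and points at mutual distance $\ge\delta$ by boundedness.
\end{itemize}

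\textbf{Step 2: continuous extension of the jet.} Integrating $\nabla D^\beta f$ along $\gamma$ gives $|D^\beta f(x)-D^\beta f(y)|\le C\|f\|_{\Cscr^k}|x-y|$ for $|\beta|<k$. For $|\beta|=k$, uniform continuity is given directly by the Hölder seminorm. Hence every $D^\beta f$ is uniformly continuous on $\Omega$ and extends continuously to $\overline\Omega$. The Hölder bounds persist on $\overline\Omega$ by passing to limits.

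\textbf{Step 3: Whitney's compatibility conditions.} For the jet $(f_\beta)=(D^\beta f)$ on the closed set $\overline\Omega$, one must show the remainders
\[
R_\beta(x,y)=f_\beta(y)-\sum_{|\beta+\gamma|\le k}\frac{f_{\beta+\gamma}(x)}{\gamma!}(y-x)^\gamma
\]
satisfy $|R_\beta(x,y)|\le C|x-y|^{k-|\beta|+\alpha}$. By density it suffices to do this for $x,y\in\Omega$.
\begin{itemize}
\item When the segment $[x,y]\subset\Omega$, this is Taylor's formula with integral remainder, combined with the Hölder bound on the order-$k$ derivatives.
\item In general, use the polygonal path $x=p_0,p_1,p_2,p_3=y$ of Step 1, whose segments lie in $\Omega$ and have lengths $\le C|x-y|$.
\item Re-expand the Taylor polynomial at $p_{j}$ around $p_{j+1}$ segment by segment. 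Each segment contributes an error of order $|x-y|^{k-|\beta|+\alpha}$.
\item The standard algebraic identity for recentring Taylor polynomials shows these errors add up, with a constant depending only on $k$ and $C$.
\end{itemize}
For $|x-y|\ge\delta$, the bound follows trivially from the sup bounds, since $\Omega$ is bounded.

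\textbf{Step 4: conclusion.} Whitney's theorem now gives $\tilde F\in\Cscr^{k}(\R^n)$, indeed with $\Cscr^{k,\alpha}$ bounds, such that $D^\beta\tilde F=f_\beta$ on $\overline\Omega$. Thus $\tilde f=\tilde F|_{\overline\Omega}\in\Cscr^{k,\alpha}(\overline\Omega)$ extends $f$.

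I expect the main obstacle to be Step 3, that is, controlling Taylor remainders between points whose connecting segment leaves $\Omega$; this is exactly where the Lipschitz (cone) condition enters. If this chain argument proves cumbersome, the alternative is to apply Stein's universal extension operator for Lipschitz domains \cite[Chap.\ VI]{SteinEM1970}, which preserves $\Cscr^{k,\alpha}$. That operator directly produces $\tilde F$, after first approximating $f$ from inside via Step 2.
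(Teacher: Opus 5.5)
Your proposal is correct and follows the same route as the paper. Both arguments extend the jet $(D^\beta f)_{|\beta|\le k}$ to $\overline\Omega$ and then apply Whitney's extension theorem; the paper extends the top-order derivatives by McShane's theorem, while you use uniform continuity. The one real difference is that you explicitly check Whitney's compatibility conditions via the quasiconvexity (cone) property of Lipschitz domains, whereas the paper leaves this to its citation of Whitney and of Malgrange's Theorem 3.2 and Complement 3.5.
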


\begin{proof}
For $k=0$ this holds by McShane's extension theorem  
\cite[Corollary 1, p.\ 840]{McShane1934}. If $k\ge 1$
and $f\in \Cscr^{k,\alpha}(\Omega)$, its partial derivatives
$D^\beta f$ of order $|\beta|=k$ belong to $\Cscr^{0,\alpha}(\Omega)$,
so they extend to functions $g_\beta=\Cscr^{0,\alpha}(\overline \Omega)$
by McShane's theorem. By Whitney's theorem \cite{Whitney1934TAMS}
(see also Malgrange \cite[Theorem 3.2 and Comp.\ 3.5]{Malgrange1966}), 
it follows that $f$ extends to a function $\tilde f\in \Cscr^k(\R^n)$
satisfying $D^\beta \tilde f= g_\beta$ on $\overline \Omega$ for 
all $|\beta|=k$. Thus, 
$\tilde f|_{\overline\Omega}\in \Cscr^{k,\alpha}(\overline\Omega)$.
Clearly, the extension $\tilde f|_{\overline\Omega}$ of $f$ is unique.
\end{proof}

We now recall the definition of H\"older--Zygmund spaces 
$\Lambda^r(\Omega)$ for any real number $r>0$. 
Write $r=k+\alpha$ with $k\in\Z_+$ and 
$0<\alpha\le 1$. If $r$ is not an integer (that is, $\alpha\ne 1$), set 
$\Lambda^r(\Omega)=\Cscr^{k,\alpha}(\Omega)$ and 
$\Lambda^r(\overline \Omega)=\Cscr^{k,\alpha}(\overline \Omega)$;
both spaces are endowed with the norm 
$\|f\|_{\Lambda^{k+\alpha}(\Omega)}:=\|f\|_{\Cscr^{k,\alpha}(\Omega)}$
\eqref{eq:Ckalpha-norm}. Assume now that $r\in \{1,2,\ldots\}$. 
Recall that the second difference $\Delta_h^2 f$ is given 
by \eqref{eq:Delta2}. We define
\begin{eqnarray}\label{eq:Lambda-1}
	\Lambda^1(\Omega) &=& \big\{f\in \Cscr(\Omega):
	\|f\|_{\Lambda^1(\Omega)} = 
	\sup_{x\in\Omega}|f(x)| + \sup_{x,x\pm h\in\Omega,\, h\ne 0}
	|h|^{-1} |\Delta_h^2 f(x)| <\infty \big\}, \\
	\label{eq:Lambda-r}
	\Lambda^r(\Omega) &=&
	\big\{f\in \Cscr^{r-1}(\Omega):
	\|f\|_{\Lambda^r(\Omega)} = \|f\|_{\Cscr^{r-1}(\Omega)} +
	\sum_{|\beta|=r-1} \|D^\beta f\|_{\Lambda^1(\Omega)} <\infty\big\},
\end{eqnarray}
where $r>1$ in \eqref{eq:Lambda-r}. 
(For $\Omega=\R^n$, see 
\cite[Propositions 8 and 9, pp.\ 146--147]{SteinEM1970}.)
The space $\Lambda^1(\R^n)$ is the classical 
Zygmund space on $\R^n$; omitting the term 
$\|f\|_\infty=\sup_{x\in\Omega}|f(x)|$ in \eqref{eq:Lambda-1}
gives the homogeneous Zygmund space.
Clearly, $\Lambda^1(\R^n)$ contains the Lipschitz space 
$\Cscr^{0,1}(\R^n) = \mathrm{Lip}^1(\R^n)$
but is not equal to it as shown by
\cite[Example 4.3.1, p.\ 148]{SteinEM1970}.
The norms \eqref{eq:Lambda-1}--\eqref{eq:Lambda-r}
are also called the {\em interior Zygmund norms} on $\Omega$.

For $r\in \N$, the Zygmund space $\Lambda^r(\overline \Omega)$ 
on a closed Lipschitz domain $\overline\Omega\subset\R^n$ 
cannot be defined in general by considering only values of 
functions in the points of $\overline \Omega$. 
Indeed, if $\Omega$ is strictly convex 
and $x,x+h\in b\Omega$ with $h\ne 0$, then 
$x-h\notin \overline \Omega$, so the second difference $\Delta^2_h f(x)$ 
is not defined. One possible definition is the following; see 
Wallin \cite[Definition 8, p.\ 105]{Wallin1988}. It can be used 
for any $r>0$. 
\begin{eqnarray}\label{eq:Lambda-r-Omega}
	\Lambda^r(\overline \Omega) &=& 
	\big\{f=\tilde f|_{\overline \Omega}: \tilde f\in  \Lambda^r(\R^n)\big\}, \\
	\|f\|_{\Lambda^r(\overline\Omega)} &=& 
	\label{eq:outer}
	\inf\big\{ \|\tilde f\|_{\Lambda^r(\R^n)}: \tilde f \in \Lambda^r(\R^n),\
	 \tilde f|_{\overline \Omega} =f\big\}. 
\end{eqnarray}
The norms \eqref{eq:outer} are called {\em exterior Zygmund norms} 
on $\Omega$. By a classical theorem due to Besov, Ilin and Nikolski 
\cite[Theorem 18.5, p.\ 63]{BesovIlinNikolskii1979},
every function in $\Lambda^r(\Omega)$ extends to a function in 
$\Lambda^r(\R^n)$.
Furthermore, the interior and the exterior definition of the spaces 
$\Lambda^r(\overline\Omega)$ and their norms are equivalent. 
Indeed, E.\ M.\ Stein \cite[Sect.\ VI.2]{SteinEM1970}
constructed a linear extension operator 
$E:\Cscr(\overline\Omega) \to \Cscr_0(\R^n)$ 
to the space of continuous functions on $\R^n$ with compact support 
such that $E:\Lambda^r(\overline\Omega)\to \Lambda^r(\R^n)$ 
is bounded for every $r>0$. Stein proved that his extension operator 
is bounded on Sobolev spaces; for H\"older--Zygmund spaces 
this was shown by Gong; see \cite[Corollary 5.3]{Gong2025TAMS}.
Furthermore, Shi and Yao \cite[Theorem 1.1]{ShiYao2024} 
proved that for $r=k+\alpha$ with $k\in \N$ and $0<\alpha\le 1$,
$\|f\|_{\Lambda^r(\Omega)}$ is equivalent to
$\sum_{j\le k} \|\nabla^j f\|_{\Lambda^\alpha(\Omega)}$.
Their proof uses Rychkov's universal extension 
for Besov spaces $B^r_{p,q}(\Omega)$; see \cite{Rychkov1999}
and note that $\Lambda^r(\Omega)=B^r_{\infty,\infty}(\Omega)$. 
For noninteger values $r>0$ and domains $\Omega$ 
with smooth boundaries, see also 
\cite[Lemma 6.37]{GilbargTrudinger1983} whose 
proof is based on Seeley's extension theorem \cite{Seeley1964}. 	

\begin{remark}
In \eqref{eq:Lambda-r-Omega}--\eqref{eq:outer},
one can also use extensions of $f$ to any domain $D\subset \R^n$
containing $\overline\Omega$. It is not clear whether these norms 
decrease to the interior norm $\|f\|_{\Lambda^r(\Omega)}$ 
\eqref{eq:Lambda-r} as $D$ shrinks to $\overline\Omega$.
\end{remark}
	
If $X$ is a smooth manifold and $\Omega$ is a Lipschitz domain in $X$ 
with compact closure, one defines the spaces $\Lambda^{r}(\Omega)$ 
and $\Lambda^{r}(\overline \Omega)$, $r>0$,
by using a finite system of smooth coordinate 
charts on $X$ covering $\overline \Omega$. 
The norms obtained in this way are comparable to one another;
see (3.11) and (3.12) in \cite[Lemma 3.1]{GanGong2024}.
(See also \cite{Palais1968,Palais1971} 
and the discussion and references in 
\cite{BourdaudCristoforis2002}, \cite[Sect.\ 2]{Forstneric2007AJM}, 
and \cite[Subsect.\ 3.2]{GongShi2024}.)
The space $\Lambda^r(\overline\Omega)$, $r>0$, 
is a commutative unital Banach algebra 
under pointwise multiplication, with Moser-type estimates for products, 
compositions, and inverses; see \cite[Lemmas 3.1--3.3]{Gong2019MA},
\cite[Lemma 6.3]{Gong2025TAMS}, and \cite{BourdaudCristoforis2002}. 
There are continuous strict embeddings among the 
classical and the H\"older--Zygmund scales on
any relatively compact Lipschitz domain $\Omega\Subset X$
in a smooth manifold $X$:
\begin{equation}\label{eq:scale}
	\Cscr^{k+1}\subset \Cscr^{k,1} \subset \Lambda^{k+1}\subset
	\Lambda^{k+\alpha} \subset 
	\Lambda^{k+\beta}\subset \Cscr^k,\quad 
	k\in\Z_+,\ 0<\beta<\alpha<1.
\end{equation}

\begin{definition}\label{def:local}
A function $f:X\to \C$ on a smooth manifold $X$ 
is said to be of {\em local class $\Lambda^r$}, $r>0$,  
if every point $x\in X$ has a compact Lipschitz neighbourhood
$K \subset X$ such that $f|_K \in \Lambda^r(K)$.
\end{definition}

The space $\Lambda^{r,loc}(X)$ of all functions of local class $\Lambda^r$ 
is a Fr\'echet space, and a Banach space if $X$ is compact 
(possibly with boundary). In the latter case, 
$\Lambda^{r,loc}(X)=\Lambda^{r}(X)$. 

%
%
\subsection{Manifold-valued maps of Zygmund classes}
\label{ss:manifold-valued} 

Let $\Omega\Subset X$ be a Lipschitz domain in a smooth
manifold $X$, and let $Y$ be another smooth manifold.
We wish to define the space 
$
	\Lambda^r(\overline \Omega,Y)
$
of maps $f:\overline\Omega\to Y$ of Zygmund class 
$\Lambda^r(\overline\Omega, Y)$.
For this, we need to know that postcompositions of Zygmund 
class maps between Euclidean spaces by smooth maps 
preserve these classes. More precisely, if 
$u=(u_1,\ldots,u_m):\overline \Omega\to \R^m$, 
with $u_i\in \Lambda^r(\overline\Omega,\R^n)$ for $i=1,\ldots,m$, 
and $F$ is a smooth function on a neighbourhood of the range 
of $u(\overline \Omega)$ in $\R^m$, we must know that 
$F\circ u \in \Lambda^r(\overline \Omega)$.
For noninteger $r$ (i.e., for the H\"older spaces), this is 
classical; see e.g.\  \cite{GilbargTrudinger1983}.
When $r$ is an integer and $n=1$ (i.e., for scalar valued $u$), 
a suitable reference is \cite{BourdaudCristoforis2002}. 
A precise reference for the general vector-valued case 
with integer $r$ seems difficult to find.
For this reason, we include the following lemma.

%
%
\begin{lemma}\label{lem:composition}
Let $\Omega\Subset\R^n$ and $D\Subset \R^m$ be Lipschitz domains, 
$u=(u_1,\ldots,u_m): \overline\Omega\to \bar D$ a map of class 
$\Lambda^r(\overline \Omega)$ for some $r\in \N$, and 
$F: \bar D \to \mathbb R$ a function of class $\Cscr^{r+1}(\overline D)$. 
Then, $F\circ u\in \Lambda^r(\overline \Omega)$.
\end{lemma}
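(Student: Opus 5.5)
The plan is to argue by induction on $r\in\N$, with the case $r=1$ as the core estimate, and to reduce to interior second differences on $\Omega$ using the equivalence of interior and exterior norms recalled above. By the result of Besov--Ilin--Nikolski and Stein's extension operator, it suffices to show that $F\circ u\in\Lambda^r(\Omega)$ with the interior norm \eqref{eq:Lambda-1}--\eqref{eq:Lambda-r}. Since $F$ is defined only on $\bar D$, I first extend $F$ to a $\Cscr^{r+1}$ function on $\R^m$ (Whitney extension on the Lipschitz domain $D$, as in the proof of Proposition \ref{prop:extension}). After this extension, convex combinations of points of $u(\overline\Omega)$ cause no trouble.

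\textbf{Base case $r=1$.} Let $x,x\pm h\in\Omega$, and write $a=u(x)$, $b_\pm=u(x\pm h)$. Taylor's formula for $F\in\Cscr^2$ gives
\[
F(b_\pm)=F(a)+dF_a(b_\pm-a)+R_\pm,\qquad |R_\pm|\le C|b_\pm-a|^2 .
\]
Adding the two expansions gives
\[
\Delta^2_h(F\circ u)(x)=dF_a\bigl(\Delta^2_h u(x)\bigr)+R_++R_- .
\]
The first term is bounded by $C\|u\|_{\Lambda^1}|h|$. For the remainders I use the standard fact that $\Lambda^1$ functions have modulus of continuity $O(|h|\log(1/|h|))$, so $|R_\pm|\le C|h|^2\log^2(1/|h|)\le C|h|$ for small $|h|$. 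When $|h|$ is large, the sup bound handles everything. On a Lipschitz domain, the log-Lipschitz estimate itself is most easily taken from the exterior definition: extend $u$ to $\tilde u\in\Lambda^1(\R^n)$ and apply the classical dyadic argument on $\R^n$. Alternatively, it is cleaner to do the whole estimate for $F\circ\tilde u$ on $\R^n$ and then restrict.

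\textbf{Inductive step.} Let $r\ge2$. Then $u\in\Cscr^{r-1}$, so $F\circ u\in\Cscr^{r-1}$ by the chain rule. Its first derivatives are
\[
\partial_j(F\circ u)=\sum_i(\partial_iF\circ u)\,\partial_ju_i .
\]
Here $\partial_ju_i\in\Lambda^{r-1}$, and since $\partial_iF\in\Cscr^{r}$, the induction hypothesis gives $\partial_iF\circ u\in\Lambda^{r-1}$. Because $\Lambda^{r-1}(\overline\Omega)$ is a Banach algebra (stated above), $\partial_j(F\circ u)\in\Lambda^{r-1}$. This gives $F\circ u\in\Lambda^r$, using that $\Lambda^r=\{f\in\Cscr^1:\nabla f\in\Lambda^{r-1}\}$, which is immediate from \eqref{eq:Lambda-r}.

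\textbf{Main obstacle.} The delicate point is the base case. The quadratic Taylor remainder must be controlled using only the Zygmund hypothesis, which requires the $|h|\log(1/|h|)$ continuity of $u$, and one must also check that the second differences near $b\Omega$ are handled correctly. I would settle both issues by working with extensions on all of $\R^n$, where the dyadic (Littlewood--Paley) proof of the log-Lipschitz bound is standard.
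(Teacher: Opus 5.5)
Your proposal is correct and follows the paper's proof. You reduce to extensions on $\R^n$ and $\R^m$, treat $r=1$ by isolating the term $\nabla F\cdot\Delta^2_h u(x)$ and bounding a quadratic error in the first differences of $u$, and induct via $\partial_j(F\circ u)=\sum_i(\partial_iF\circ u)\,\partial_ju_i$. The only difference is that the paper (using the mean value theorem) bounds that error with the H\"older-$\tfrac12$ estimate from $\Lambda^1\subset\Lambda^{1/2}$, which already gives $|u(x\pm h)-u(x)|^2\le C|h|$, so the log-Lipschitz modulus you single out as the main obstacle is more than is needed.
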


\begin{proof}
Using the Stein extension operator 
\cite[Sect.\ VI.2]{SteinEM1970} and the results 
mentioned above, we may assume $F\in \Cscr^{r+1}(\R^m)$
and $u\in \Lambda^r(\R^n)$ both have compact support.
By $\|u\|_{\Lambda^r}$ we denote the Zygmund norm 
of the extended function, and likewise for $F$.

We first prove the lemma for $r=1$. Given points $x,y\in\R^n$,
consider the function 
\[
 	g(t)=F\big((1-t)u(x)+tu(x+y)\big) +F\big((1-t)u(x)+tu(x-y)\big).
\]
We have 
\[
	g(1)-g(0) = F\big(u(x+h)\big)+ F\big(u(x-y)\big)-2F(x) 
	= \Delta^2_y (F\circ u)(x).
\]
(See \eqref{eq:Delta2}.) There is a $t\in (0,1)$ such that $g(1)-g(0) =g'(t)$. We have
\begin{eqnarray*}
   g'(t) &=& \nabla F\big((1-t)u(x)+tu(x+y)\big) \big(u(x+y)-u(x)\big)\\
   && +\nabla F\big((1-t)u(x)+tu(x-y)\big) \big(u(x-y)-u(x)\big) \\
   &=& \nabla F\big((1-t)u(x)+tu(x+y)\big)\Delta^2_y u(x) \\
   && + \big[\nabla F\big((1-t)u(x)+tu(x-y)\big) 
   	- \nabla F\big((1-t)u(x)+tu(x+y)\big)\big] 
   				\big(u(x-y)-u(x)\big). 
\end{eqnarray*}
The first term after the last equality sign is clearly bounded by 
$\|F\|_{\Cscr^1}\|u\|_{\Lambda^1}|y|$. In the second term,
\begin{eqnarray*}
	&& \big|\nabla F((1-t)u(x)+tu(x-y)) - \nabla F((1-t)u(x)+tu(x+y))\big| \le \\
	&& \quad \le const \|F\|_{\Cscr^2} |u(x+y)-u(x-y)| 
	\le const \|F\|_{\Cscr^2} \|u\|_{\Lambda^\alpha}|y|^\alpha
\end{eqnarray*}
for any $\alpha \in (0,1)$, where the constant only depends on 
$\alpha$ and $m$. (We used that $\Lambda^1\subset \Lambda^\alpha$.
Without loss of generality we may take $\alpha=1/2$.)
Similarly, we estimate the last factor in the second term by 
\[
	|u(x-y)-u(x)| \le \|u\|_{\Lambda^{1-\alpha}} |y|^{1-\alpha}.
\]
Hence, the second term in the expression for $g'(t)$ is bounded by 
\[
	const \|F\|_{\Cscr^2} \|u\|_{\Lambda^\alpha}|y|^\alpha
	 \|u\|_{\Lambda^{1-\alpha}}|y|^{1-\alpha}
	 = const \|F\|_{\Cscr^2} \|u\|_{\Lambda^\alpha}
	 \|u\|_{\Lambda^{1-\alpha}} |y|.
\]
For $y \ne 0$ it follows that 
\[ 
	|y|^{-1} |\Delta^2_y (F\circ u)(x)| \le  
	\|F\|_{\Cscr^1}\|u\|_{\Lambda^1} +
	const \|F\|_{\Cscr^2} \|u\|_{\Lambda^\alpha}
	\|u\|_{\Lambda^{1-\alpha}},
\]
and hence $F\circ u\in \Lambda^1$. This proves the result for $r=1$. 
Assume now that $r>1$ and the lemma holds for functions in $\Lambda^{r-1}$.
Let $F\in \Cscr^{r+1}$ and $u=(u_1,\ldots,u_n)\in \Lambda^{r}$.
Writing the coordinates on $\R^n$ and $\R^m$ 
by $x=(x_1,\ldots,x_n)$ and $y=(y_1,\ldots,y_m)$, respectively, we have
\[
	\frac{\di}{\di x_i} F(u(x)) = 
	\sum_{j=1}^m \frac{\di F}{\di y_j}(u(x)) \frac{\di u_j}{\di x_i}(x),
	\quad i=1,\ldots,n.
\]
Since $\frac{\di F}{\di y_j}\in \Cscr^{r}$ for 
$j=1,\ldots,m$, the inductive assumption implies 
$\frac{\di F}{\di y_j}\circ u \in \Lambda^{r-1}$.
Furthermore, $\frac{\di u_j}{\di x_i}\in \Lambda^{r-1}$,
so $\frac{\di}{\di x_i} (F\circ u) \in \Lambda^{r-1}$.
It follows that $F\circ u\in \Lambda^{r}$ and the induction may continue.
\end{proof}

 \begin{remark}\label{rem:composition}
 With suitable adjustments, the proof of Lemma \ref{lem:composition}
 also applies to classes $\Lambda^r$ with noninteger $r$,
 that is, to H\"older classes. By elaborating on the proof, one can 
 obtain explicit estimates and see in particular that the postcomposition 
 by a smooth function on $\R^m$ defines a smooth operator 
 $\Lambda^r(\overline \Omega,\R^m)\to \Lambda^r(\overline \Omega)$. 
Lemma \ref{lem:composition} also lets us define the space 
 $\Lambda^{r,loc}(X,Y)$ of maps of local Zygmund class $\Lambda^r$
 between any pair of smooth manifolds.
 \end{remark}

%
%
\subsection{Mapping spaces and fibre bundles of class 
$\Lambda^r_\Oscr(\overline\Omega)$} \label{ss:Lr-bundles}

On a smooth manifold $X$, one defines fibre bundles of class
$\Lambda^r$ by asking that the transition maps are
of local class $\Lambda^r$; see Definition \ref{def:local}
and Remark \ref{rem:composition}.
We now describe such bundles in more detail in 
the case of main interest for this paper. 

Assume that $X$ and $Y$ are complex manifolds and 
$\Omega\Subset X$ is an open Lipschitz domain. Set
\[
	\Lambda^r_\Oscr(\overline\Omega,Y)=
	\{f\in \Lambda^r(\overline\Omega,Y): 
	f\ \text{is holomorphic on}\ \Omega\},\quad r>0.
\]
In particular, 
$\Lambda^r_\Oscr(\overline\Omega,\C) =\Lambda^r_\Oscr(\overline\Omega)$.

Let $Y$ be a complex manifold. 
A holomorphic fibre bundle $h:Z\to \overline\Omega$ 
of class $\Lambda^r_\Oscr(\overline \Omega)$ with fibre $Y$  
has total space of the form 
\[
	Z= \bigsqcup_{i=1}^m U_i \times Y/\!\!\sim, 
\]  
where $\{U_i\}_{i=1}^m$ is an open covering of 
$\overline \Omega$ by relatively open Lipschitz domains  
$U_i\subset \overline \Omega$ (which can be chosen of
the form $U_i=\wt U_i\cap \overline \Omega$ with 
$\wt U_i$ an open Lipschitz domain in $\R^n$)
such that the intersections 
$U_{i,j}=U_i\cap U_j$ are also Lipschitz, and a point 
$(x,y)\in U_j \times Y$ $(x\in U_{i,j})$ 
is identified by the equivalence relation $\sim$ with the point
$(x,\phi_{i,j}(x,y))\in U_i\times Y$, where the map 
\[
	U_{i,j} \times Y \ni (x,y) \mapsto \phi_{i,j}(x,y)\in Y
\] 
is of class $\Lambda^{r,loc}_\Oscr(U_{i,j})$ in $x\in U_{i,j}$,
holomorphic in $y$, $\phi_{i,j}(x,\cdotp)\in \Aut(Y)$ 
for every $x\in U_{i,j}$, and the maps $\phi_{i,j}$ satisfy the usual
1-cocycle condition. 
A section $f:\overline \Omega\to Z$ of $h:Z\to \overline\Omega$ 
is given by a collection of maps $f_i:U_i\to Y$ satisfying the 
compatibility conditions
\[
	f_i(x) = \phi_{i,j}(x, f_j(x)),\quad x\in U_{i,j},\ i,j=1,\ldots,m.
\]
The section $f:\overline \Omega\to Z$ is of class 
$\Lambda^r_\Oscr(\overline \Omega)$ if and
only if $f_i\in \Lambda^r_\Oscr(U_i,Y)$ for $i=1,\ldots,m$. 
We denote by $\Gamma^r_\Oscr(\overline \Omega,Z)$ the space 
of sections of class $\Lambda^r_\Oscr(\overline \Omega)$
of the bundle $h:Z\to\overline\Omega$.
If $Y$ is a smooth manifold, the analogous definition gives
fibre bundles $Z\to \overline\Omega$ of class $\Lambda^r(\overline\Omega)$.

In particular, denoting by $GL_n(\C)$ the general linear group 
of rank $n$ over $\C$, a vector bundle $\pi:E\to\overline \Omega$
of class $\Lambda^r_\Oscr(\overline \Omega)$ and rank $n$
has fibre $\C^n$ and transition maps 
\[
	\phi_{i,j}(x,v)=A_{i,j}(x)v, \quad v\in\C^n,
\]
where the maps $A_{i,j}\in \Lambda^{r,loc}_\Oscr(U_{i,j},GL_n(\C))$
satisfy the 1-cocycle conditions
\[
	A_{i,i}=I,\quad A_{i,j}A_{j,i}=I,\quad 
	A_{i,j}A_{j,k}A_{k,i}=I\quad \text{for all}\ i,j,k=1,\ldots,m.
\]
Here, $I\in GL_n(\C)$ denotes the identity matrix.
By shrinking the sets $U_i$ in the covering, we can represent the same
vector bundle by transition maps 
$A_{i,j}\in \Lambda^{r}_\Oscr(U_{i,j},GL_n(\C))$.
A section of $E$ of class $\Gamma^r_\Oscr(\overline\Omega)$ 
is given by a collection of maps $f_i\in \Gamma^r_\Oscr(U_i,\C^n)$ 
satisfying the compatibility conditions
\[
	f_i(x) = A_{i,j}(x) f_j(x),\quad x\in U_{i,j},\ i,j=1,\ldots,m.
\]
Similarly one defines vector bundle morphisms of class 
$\Lambda^r_\Oscr(\overline \Omega)$, 
subbundles, quotient bundles, etc. 
We refer to Leiterer \cite{Leiterer1990} for a similar treatment of
vector bundles of class $\Ascr^r(\overline \Omega)$ 
for $r\in \Z_+$, i.e., of class $\Cscr^r(\overline \Omega)$ and
holomorphic on $\Omega$.

%
%
\subsection{Regularity of the canonical solution operator for the
$\dibar$-equation in H\"older--Zygmund spaces.}

Let $p\ge 0$ and $q\ge 1$ be integers, and
let $\Omega$ be a domain in a complex manifold $X$. 
The $\dibar$-problem on $\Omega$ asks for the existence 
and regularity properties of solutions of the equation 
$\dibar u=f$ for a differential $(p,q)$-form $f$ on $\Omega$
(or on its closure $\overline\Omega$) 
satisfying the necessary condition $\dibar f=0$. One of the
most successful techniques in this field is the $\dibar$--Neumann
method introduced in the pioneering works of Kohn \cite{Kohn1963,Kohn1964}; 
see the books by Folland and Kohn \cite{FollandKohn1972} and 
Chen and Shaw \cite{ChenShaw2001}, among others.

Let $\overline\Omega$ be a compact complex 
Hermitian manifold of dimension $n$ with $\Cscr^\infty$
boundary $b\Omega$ and Stein interior $\Omega$
such that at each point of $b\Omega$ the Levi form 
has at least $n-q$ positive eigenvalues. Let $N_q$ denote the 
Neumann operator for the complex Laplacian
\[
	\Box_q = \dibar^*_q\dibar_{q} +  \dibar_{q-1} \dibar^*_{q-1},
	\quad q\ge 1
\]
acting on $(p,q)$-forms which satisfy the $\dibar$-Neumann boundary conditions on $b\Omega$. This means that 
$N_q\Box_q = \Box_q N_q$ is the $L^2$ orthogonal projection onto 
the range of $\Box_q$. Then, $T_q= \dibar^*_{q-1}N_q$ is 
Kohn's canonical solution operator for the $\dibar$-equation 
$\dibar_{q-1}u = f$ with $f$ a $(p,q)$-form with $\dibar_q f=0$.
Denote by $\Lambda^r_{p,q}(\overline \Omega)$ the space
of $(p,q)$-forms on $\overline\Omega$ of class 
$\Lambda^r(\overline \Omega)$ for $r>0$. 
(Note that $(p,q)$-forms are sections of the vector bundle 
$\Lambda^{p,q}TX|_{\overline \Omega}$, so the spaces 
$\Lambda^r_{p,q}(\overline \Omega)$ are naturally defined.)
The following is a special case of 
\cite[Theorem 2]{BealsGreinerStanton1987} 
due to Beals, Greiner and Stanton.
See also Gong \cite{Gong2025TAMS}.

\begin{theorem}\label{th:dibar}
Let $\Omega$ be as above.
The canonical solution $T_q=\dibar_{q-1}^*N_q$ to the $\dibar$-equation 
$\dibar u=f$ for $f\in \Lambda^r_{p,q}(\overline \Omega)$ satisfying 
$\dibar f=0$ maps $\Lambda^r_{p,q}(\overline \Omega)\cap\ker \dibar$
boundedly to $\Lambda^{r+1/2}_{p,q-1}(\overline \Omega)$ 
for every $p\ge 0$, $q\ge 1$, and $r>0$. 
\end{theorem}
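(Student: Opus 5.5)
Theorem \ref{th:dibar} is quoted as a special case of \cite[Theorem 2]{BealsGreinerStanton1987}, so the plan is to reduce to that result rather than reprove the estimates. The first step is to check that our hypotheses are those of Beals, Greiner and Stanton. They work on a compact Hermitian manifold with smooth boundary on which Kohn's condition $Z(q)$ holds, i.e.\ at each boundary point the Levi form has at least $n-q$ positive eigenvalues (or $q+1$ eigenvalues of the same sign). In the main application $\Omega$ is strongly pseudoconvex, so all $n-1$ eigenvalues are positive and $Z(q)$ holds for every $q\ge 1$. By Kohn's subelliptic theory the $\dibar$-Neumann problem is then subelliptic with gain $1/2$, $N_q$ exists and is bounded on $L^2$, and $T_q=\dibar^*_{q-1}N_q$ satisfies $\dibar T_q f=f$ whenever $\dibar f=0$, because $\Omega$ is Stein, so $H^{p,q}_{L^2}(\Omega)=0$ and the Hodge decomposition has no harmonic part.

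The second step is the regularity. Beals--Greiner--Stanton show that, in suitable boundary coordinates, $N_q$ and $\dibar^* N_q$ are given by operators in an anisotropic (Heisenberg-type) pseudodifferential calculus on $b\Omega$ combined with Poisson-type operators into the interior. They prove that these operators are bounded from $\Lambda^r$ to anisotropic Lipschitz spaces $\Gamma^{r+1}$, which gain one derivative in complex tangential directions and half a derivative in the remaining directions. Composing with $\dibar^*$ costs one derivative in complex tangential directions and one isotropic derivative in the normal/Reeb direction. Together with the inclusion $\Gamma^{s}\subset\Lambda^{s/2}$ in the worst direction, this yields the isotropic statement $T_q:\Lambda^r_{p,q}\cap\ker\dibar\to\Lambda^{r+1/2}_{p,q-1}$ for all $r>0$, including integer $r$, where Zygmund rather than Lipschitz spaces are the natural ones. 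Boundedness, and not just the mapping property, follows from the closed graph theorem applied on the closed subspace $\ker\dibar$.

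The only delicate point is matching the function spaces. The Zygmund spaces in \cite{BealsGreinerStanton1987} are defined by Littlewood--Paley or second differences in coordinate charts, while here $\Lambda^r(\overline\Omega)$ uses the exterior norm \eqref{eq:outer}. I would settle this with the equivalence of interior and exterior norms via Stein's extension operator and Gong's result \cite[Corollary 5.3]{Gong2025TAMS}, together with the chart-independence in \cite[Lemma 3.1]{GanGong2024}. Alternatively, one can cite Gong \cite{Gong2025TAMS}, where the estimate is stated directly in the present spaces for strongly pseudoconvex domains.
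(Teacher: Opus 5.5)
Your overall route is the paper's. The paper gives no proof of Theorem \ref{th:dibar}: it quotes it as a special case of \cite[Theorem 2]{BealsGreinerStanton1987} and points also to \cite{Gong2025TAMS}. Your check that at least $n-q$ positive Levi eigenvalues (in particular strong pseudoconvexity, for all $q\ge 1$) gives condition $Z(q)$ is a reasonable addition the paper leaves implicit. So are your remarks on identifying the Zygmund norms via Stein's extension operator and \cite{Gong2025TAMS}.

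The heuristic derivation in your second step is not valid, however, and should be dropped. Suppose all you had were an anisotropic bound $\dibar^*N_q:\Lambda^r\to\Gamma^{r+1}$. The embedding $\Gamma^{s}\subset\Lambda^{s/2}$ then gives only $\dibar^*N_q f\in\Lambda^{(r+1)/2}$, and $(r+1)/2<r+1/2$ for every $r>0$. A $\Gamma^{r+1}$ bound controls the direction missing from the complex tangent space only to order $(r+1)/2$. So the isotropic gain of $1/2$ cannot be recovered from the anisotropic estimate plus that embedding. The isotropic estimate $\Lambda^r\to\Lambda^{r+1/2}$ for $\dibar^*N_q$ is a separate assertion. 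It comes from the finer structure of the operators, and it is exactly the part of \cite[Theorem 2]{BealsGreinerStanton1987} that the paper is quoting. Cite that statement directly (or the corresponding estimate in \cite{Gong2025TAMS}) rather than trying to derive it from the $\Gamma$-spaces, and the proposal is fine.
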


We shall use this result for $p=0$.
The gain of regularity for $1/2$ implies that the operator 
$T_q: \Lambda^r_{p,q}(\overline \Omega)\cap\ker \dibar
\to \Lambda^{r}_{p,q-1}(\overline \Omega)$ is compact, 
a fact which is useful in many applications. 

%
%
%
%
\section{Approximation of maps of class 
$\Lambda^r_\Oscr$ on strongly pseudoconvex domains}
\label{sec:approximation}

In this section, we prove Theorem \ref{th:approximation} and its
parametric version, Theorem \ref{th:approximation-par}. 
We shall use the following notion of a Cartan pair; 
see \cite[Definitions 5.7.1 and 5.10.2]{Forstneric2017E}.

%
%
\begin{definition}\label{def:Cartan-pair}
A pair $(A,B)$ of compact sets in a complex manifold $X$ is a 
{\em Cartan pair} if 
\begin{enumerate}[\rm (i)]
\item $A$, $B$, $D=A\cup B$ and $C=A\cap B$ are closures of 
smoothly bounded, strongly pseudoconvex domains with Stein
interior, and
\item 
$\overline{A\setminus B}\cap \overline{B\setminus A} =\varnothing$.
\end{enumerate}
A Cartan pair $(A,B)$ is {\em special} if there is a coordinate
neighbourhood of $B$ in $X$ in which the sets $B$ and $C$ 
are strongly convex. In this case, $B$ is said to be a convex bump 
attached to $A$.
\end{definition}

A more general notion of a Cartan pair in
\cite[Definition 5.7.1 (I)]{Forstneric2017E} is suitable 
when considering holomorphic maps on neighbourhoods of 
the respective sets, which may be any Stein compacts. 
We shall not need it in this paper since our 
focus is on mapping spaces on smoothly bounded domains. 

\begin{proof}[Proof of Theorem \ref{th:approximation}]
Since the domain $\Omega\Subset X$ is smoothly bounded
and strongly pseudoconvex, there is a smooth strongly plurisubharmonic
function $\rho$ on a neighbourhood $U\subset X$ of $\overline\Omega$
such that $\Omega=\{x\in U:\rho(x)<0\}$ and $d\rho_x\ne 0$
for every point $x\in b\Omega =\{\rho=0\}$. Pick $c>0$ such that
$\rho$ has no critical values on the interval $[0,c]$.
Set $A=\overline\Omega=\{\rho\le 0\}$ and $A'=\{\rho\le c\}$.
Given an open cover $\Ucal=\{U_j\}$ of $\overline{A'\setminus A}$ 
consisting of holomorphic coordinate charts $U_j\subset X$,  
\cite[Lemma 5.10.3]{Forstneric2017E} gives an increasing 
sequence of compact, smoothly bounded, 
strongly pseudoconvex domains 
\begin{equation}\label{eq:sequenceA}
	A_0:=A \subset A_1 \subset \cdots \subset A_m=A'
\end{equation}
for some $m\in\N$ such that for every $k=0,1,\ldots,m-1$ we have 
$A_{k+1}=A_k\cup B_k$, where $(A_k,B_k)$ is a special Cartan pair
(see Definition \ref{def:Cartan-pair}) and $B_k\subset U_j$
for some $j=j(k)$.  
To prove the theorem, it therefore suffices to show that for 
every special Cartan pair $(A,B)$ we can approximate any map 
$f\in \Lambda^r_{\Oscr} (A,Y)$ as closely 
as desired in the $\Lambda^r$ topology 
by maps $\tilde f\in \Lambda^r_{\Oscr} (D,Y)$, 
where $D=A\cup B$; the theorem then follows by a finite induction
using the sequence \eqref{eq:sequenceA}.

We first consider the case of functions, that is, $Y=\C$. 
Fix such a pair $(A,B)$ and a function 
$f\in \Lambda^r_\Oscr(A)$. Set $C=A\cap B$.
We can find a holomorphic function $g$ on a neighbourhood of $B$ 
which approximates $f|_C$ as closely as desired in $\Lambda^r(C)$. 
To do this, we proceed as follows.

By the assumption, there is a neighbourhood $W\subset X$ 
of $B$ and a holomorphic coordinate map $\psi:W\to \wt W\subset\C^n$,
with $n=\dim X$, such that the sets $\wt C=\psi(C)$ and $\wt B=\psi(B)$
are strongly convex. Choose a point $p$ in the interior of $\wt C$;
we may assume that $p=0\in\C^n$. For $t\in (0,1)$
the holomorphic map $\phi_t:W\to W$ defined by 
$\phi_t(x)=\psi^{-1}(t \psi(x))$, $x\in W$, satisfies
$\phi_t(C)\subset \mathring C$. 
The function $f_t = f \circ\phi_t$ is then holomorphic on a 
neighbourhood $V_t \subset W$ of $C$ and it approximates
$f|_C$ in $\Lambda^r(C)$ for $t$ close to $1$. 
Fix $t$ and pick a compact neighbourhood 
$C'\subset V_t$ of $C$ such that $\psi(C')$ is convex. 
By the Oka--Weil theorem, we can approximate $f_t$ uniformly on 
$C'$ by a function $g\in \Oscr(B)$. By Cauchy estimates, 
this gives approximation of $f_t|_C$ by $g|_C$ in $\Lambda^r(C)$. 
If the approximation is close enough in both steps then $g$ 
approximates $f|_C$ to the desired precision in $\Lambda^r(C)$. 

Condition (ii) in Definition \ref{def:Cartan-pair}
ensures the existence of a smooth function $\chi: X \to[0,1]$ 
which equals $1$ on a neighbourhood of $\overline {A\setminus B}$ 
and equals $0$ on a neighbourhood of $\overline {B \setminus A}$. Set
\[
	u=\chi f+(1-\chi)g\in  \Lambda^r(D).
\] 
Note that $u=f$ on $\overline{A\setminus B}$, $u=g$ on 
$\overline{B\setminus A}$, $f-u = (1-\chi) (f-g)$ on $A$, and hence 
\begin{equation}\label{eq:est1}
	\|f-u\|_{\Lambda^r(A)}= \|(1-\chi) (f-g)\|_{\Lambda^r(A)}
	\le c_0 \|f-g\|_{\Lambda^r(C)}
\end{equation}
for some $c_0>0$ depending only on $\chi$.
Furthermore, $\dibar u = (f-g)\dibar\chi \in \Lambda^r_{0,1}(D)$ 
is a $\dibar$-closed $(0,1)$-form whose support is disjoint from 
$\overline{A\setminus B}\cup \overline{B\setminus A}$. It follows that 
\[ 
	\|\dibar u\|_{\Lambda^r_{0,1}(D)} \le c_1 \|f-g\|_{\Lambda^r(C)}
\] 
for some $c_1>0$ depending on $\chi$. 
By Theorem \ref{th:dibar} there exists $\tilde u\in \Lambda^r(D)$
satisfying $\dibar \tilde u=\dibar u$ and 
\begin{equation}\label{eq:est3}
	\|\tilde u\|_{\Lambda^r(D)}\le 
	c_2 \|\dibar u\|_{\Lambda^r(D)}
	\le c_1c_2 \|f-g\|_{\Lambda^r(C)}
\end{equation}
for some $c_2>0$ depending only on $D$.
The function $\tilde f=u-\tilde u \in \Lambda^r(D)$ then 
satisfies $\dibar \tilde f=0$, so it is holomorphic on $\mathring D$. 
On $A$, we have $f-\tilde f = (f - u) + \tilde u$, and it  
follows from \eqref{eq:est1}--\eqref{eq:est3} that 
\[
	\|f - \tilde f\|_{\Lambda^r(A)}\le (c_0+c_1c_2)\|f-g\|_{\Lambda^r(C)},
\]
which can be made arbitrarily small by a suitable choice of $g$.
This proves the theorem for functions.

Assume now that $Y$ is a complex manifold.
Every map $f\in \Lambda^r_\Oscr(\overline \Omega,Y)$ for $r>0$ 
is continuous on $\overline\Omega$ and holomorphic on $\Omega$, 
so its graph has a Stein neighbourhood in $X\times Y$
(see \cite[Theorem 1.2]{Forstneric2007AJM} or 
\cite[Corollary 8.11.2]{Forstneric2017E}). The proof is then reduced to 
the case of functions as in \cite[Theorem 8.11.4]{Forstneric2017E},
using the fact that postcompositions by holomorphic maps
preserve the Zygmund spaces.
\end{proof}

%
%
%
Theorem \ref{th:approximation} has the following parametric extension.

%
%
\begin{theorem}\label{th:approximation-par}
Assume that $\Omega$ is a relatively compact, smoothly bounded, strongly 
pseudoconvex domain in a Stein manifold $X$, $Y$ is a complex manifold,
and $P$ is a compact Hausdorff space. Every continuous map 
$f:P\to \Lambda^r_\Oscr(\overline \Omega,Y)$, $r>0$, 
can be approximated in the $\Lambda^r(\overline \Omega,Y)$ topology
by continuous maps $\tilde f: P \to \Oscr(\overline \Omega,Y)$.
If in addition $Q$ is a closed subspace of $P$ which is a 
strong neighbourhood deformation retract and 
$f|_{Q}:Q \to \Oscr(\overline \Omega,Y)$, 
we can choose $\tilde f$ to agree with $f$ on $Q$.
\end{theorem}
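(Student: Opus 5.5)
The plan is to run the proof of Theorem \ref{th:approximation} with the parameter $p\in P$ carried along, checking that every construction depends continuously (and, where possible, linearly) on the data. The key structural fact is that all the analytic operations used there are continuous in the $\Lambda^r$ topologies: the Cartan-pair procedure, the cutoff $\chi$, and the bounded linear solution operator $T$ of Theorem \ref{th:dibar} on the fixed domain $D$. As before, I reduce via the sequence \eqref{eq:sequenceA} to a single special Cartan pair $(A,B)$. I first treat the case $Y=\C$, then manifold-valued maps.

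\textbf{Scalar case.} Given a continuous family $f_p\in\Lambda^r_\Oscr(A)$, I fix $t<1$ close to $1$ and set $f_{p,t}=f_p\circ\phi_t$.
\begin{enumerate}[\rm (i)]
\item By compactness of $P$, $f_{p,t}$ approximates $f_p|_C$ in $\Lambda^r(C)$ uniformly in $p$, since $p\mapsto f_p$ has compact image and composition with $\phi_t$ is continuous.
\item Oka--Weil approximation on the convex set $C'$ must be made continuous in $p$. I would do this with a partition of unity on $P$: cover the compact image by finitely many balls, approximate the centres by polynomials $g_j$, and set $g_p=\sum_j\theta_j(p)g_j$. Since approximation in the convex coordinate chart is linear, the convex combination still approximates. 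Cauchy estimates then give $\Lambda^r(C)$ closeness uniformly in $p$.
\item Set $u_p=\chi f_p+(1-\chi)g_p$ and $\tilde f_p=u_p-T(\dibar u_p)$. This is continuous in $p$ because $T$ is linear and bounded, and it satisfies the estimates \eqref{eq:est1}--\eqref{eq:est3} uniformly in $p$.
\end{enumerate}
Iterating over the finitely many pairs gives maps holomorphic on $\mathring A'$, hence in $\Oscr(\overline\Omega)$.

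\textbf{Manifold-valued case.} I use a parametric version of the Stein neighbourhood argument. For each $p_0$ the graph of $f_{p_0}$ has a Stein neighbourhood, and nearby $f_p$ lie in it. I embed that neighbourhood in $\C^N$ with a holomorphic retraction, approximate the coordinates by the scalar parametric result, and then apply the retraction. To globalise over $P$, I would follow the standard approach of \cite[Theorem 8.11.4]{Forstneric2017E}. Either use a single Stein neighbourhood of the union of graphs over the compact image, obtained via the parametric version of \cite[Theorem 1.2]{Forstneric2007AJM}. Or patch local approximations using fibrewise convex combinations in local holomorphic sprays, i.e.\ writing nearby maps as $s(f,\xi)$ with $\xi$ sections of $f^*TY$ as in Theorem \ref{th:manifold}, and averaging the $\xi$'s.

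\textbf{Relative condition on $Q$, the main obstacle.} For $p\in Q$ the maps $f_p$ are already holomorphic near $\overline\Omega$, so they should be left unchanged. I first construct $\tilde f$ as above and then correct near $Q$. Let $\rho_s:U\to Q$, $s\in[0,1]$, be the strong deformation retraction of a neighbourhood $U\supset Q$, and let $\tau:P\to[0,1]$ be a cutoff with $\tau=1$ on $Q$ and $\supp\tau\subset U$. I interpolate, in the spray coordinates $\xi$ around $f_p$ (or linearly in the scalar case), between $\tilde f_p$ and $f_{\rho_1(p)}$, choosing $U$ so small that $f_{\rho_1(p)}$ is $\Lambda^r$-close to $f_p$. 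The difficulty is that $f_{\rho_1(p)}$ is holomorphic only on a neighbourhood depending on $p$, so the relevant domain is not uniform. Compactness of $Q$ gives one neighbourhood $V\supset\overline\Omega$ on which all $f_q$, $q\in Q$, are holomorphic, and I would run the whole construction with $\Oscr(\overline\Omega)$ replaced by $\Oscr(V')$ for some $V'\Subset V$. Making the interpolation stay in $Y$ while preserving holomorphy and closeness is exactly what the spray/linear structure of Theorem \ref{th:manifold} provides.
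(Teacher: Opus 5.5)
\textbf{The gap is step (i), and it cannot be repaired.} You claim that $f_p\circ\phi_t\to f_p|_C$ in $\Lambda^r(C)$ as $t\to 1$. Continuity of $f\mapsto f\circ\phi_t$ for fixed $t$ is not the issue. Compactness of $P$, together with uniform boundedness of these composition operators, would indeed upgrade pointwise convergence to convergence uniform in $p$. But the pointwise convergence itself is false: dilations, like translations, are not norm-continuous on H\"older--Zygmund spaces.

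Take $\Omega=\mathbb{D}\subset\C$ (or the ball in $\C^n$ with $f=(1-z_1)^\alpha$), $r=\alpha\in(0,1)$, and $f(z)=(1-z)^\alpha$ with the principal branch. This $f$ lies in $\Lambda^\alpha_\Oscr(\overline{\mathbb{D}})$. Any $g$ holomorphic on a neighbourhood of $\overline{\mathbb{D}}$ is Lipschitz on $\overline{\mathbb{D}}$ with some constant $L$. Comparing the points $1$ and $1-\delta$ gives $\|f-g\|_{\Lambda^\alpha(\overline{\mathbb{D}})}\ge 1-L\delta^{1-\alpha}$ for every $\delta\in(0,1)$, hence $\|f-g\|_{\Lambda^\alpha(\overline{\mathbb{D}})}\ge 1$. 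In particular $f\circ\phi_t\not\to f$ in $\Lambda^\alpha$.

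The same happens for every $r>0$: use $(1-z)^{k+\alpha}$ for $r=k+\alpha$, and $(1-z)^{k+1}\log(1-z)$ for $r=k+1$. In general, the $\Lambda^r$-closure of $\Oscr(\overline\Omega)$ lies in the $\Lambda^r$-closure of $\Cscr^\infty(\overline\Omega)$ (the little H\"older--Zygmund class), and these examples show that this is a proper closed subspace.

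This is not a weakness peculiar to your route. The paper proves the statement in three steps: it applies Theorem \ref{th:approximation} at each $p_0\in P$, glues the local holomorphic approximants by successive patching with fibrewise convex combinations in Stein neighbourhoods of graphs, and handles $Q$ by precomposing $f$ with a retraction $\psi$. But the proof of Theorem \ref{th:approximation} contains exactly your claim that $f\circ\phi_t$ approximates $f|_C$ in $\Lambda^r(C)$. Since the case of a one-point $P$ is Theorem \ref{th:approximation}, the example shows the statement itself is false in the $\Lambda^r$ norm topology, so no parametric bookkeeping can rescue it.

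What your argument (and the paper's) does prove is approximation in $\Lambda^s$ for every $s<r$, since interpolating between uniform convergence and a uniform $\Lambda^r$ bound gives $f\circ\phi_t\to f$ in $\Lambda^s$. It also proves approximation in $\Lambda^r$ for families with values in the little class.

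Even then, two of your manifold-valued steps need the paper's device instead.
\begin{enumerate}[\rm (1)]
\item A single Stein neighbourhood of the union of all graphs need not exist. The constant maps into a compact $Y$, parametrised by $P=Y$, sweep out $\overline\Omega\times Y$.
\item Averaging sections $\xi$ in spray coordinates centred at $f_p$ (in your globalisation and in your interpolation near $Q$) yields maps of class $\Lambda^r_\Oscr$ over $\overline\Omega$ only, not maps holomorphic on a neighbourhood of $\overline\Omega$.
\end{enumerate}
The paper avoids both by working locally in $p$ with graph neighbourhoods that are fibrewise convex over a neighbourhood $U_i$ of $\overline\Omega$. Convex combinations of the holomorphic approximants then stay holomorphic there.
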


\begin{proof}
Denote by $pr_X:X\times Y\to X$ the projection onto the first factor. 
Fix a point $p_0\in P$. 
The map $f(p_0)\in \Lambda^r_\Oscr(\overline \Omega,Y)$
is continuous on $\overline \Omega$, so its graph over $\overline \Omega$
has an open Stein neighbourhood $V_0 \subset X\times Y$. 
Furthermore, $V_0$ can be chosen to be fibrewise biholomorphic 
(with respect to the projection $pr_X$) 
to a domain with convex fibres in a holomorphic vector bundle
$E_0 \to U_0$ over an open neighbourhood $U_0\subset X$ of 
$\overline \Omega$ (see \cite[Theorem 1.2]{Forstneric2007AJM} or
\cite[Theorem 8.11.1]{Forstneric2017E}).  
With respect to such a biholomorphism, the notion of a convex combination
of points in the fibres of $pr_X:V_0\to X$ is well defined. 
By Theorem \ref{th:approximation} there is  
a holomorphic map $\tilde f(p_0) \in \Oscr(U_0,Y)$  
on a neighbourhood $U_0\subset X$ of $\overline \Omega$ which 
approximates $f(p_0)$ in $\Lambda^r(\overline \Omega,Y)$ 
to a desired precision and whose graph is contained in $V_0$. 
For $p\in P$ close to $p_0$, the graph of 
$f(p)\in \Lambda^r_\Oscr(\overline \Omega,Y)$ lies 
in $V_0$ and $\tilde f(p_0)|_{\overline\Omega}$ is an approximant to $f(p)$. 
Repeating the same procedure at other points of $P$ gives a finite open 
covering $\Pcal=\{P_i\}_{i=0}^m$ of $P$ and for each $i=0,\ldots,m$ 
a pair of open Stein domains $U_i\subset X$, $V_i\subset X\times Y$ and 
a map $\tilde f_i\in \Oscr(U_i,Y)$ such that the graph of every map
$f(p)$, $p\in U_i$, and also of $\tilde f_i$, 
lies in $V_i$, and $f(p)$ is close to $\tilde f_i$ to a desired 
precision in $\Lambda^r(\overline \Omega,Y)$ for all $p\in U_i$. 
In view of the fibrewise convex structure of every domain $V_i$,
we can use the {\em method of successive patching} 
(see \cite[p.\ 78, p.\ 282]{Forstneric2017E} for the details) 
to obtain a map $\tilde f:P\to \Oscr(\overline \Omega,Y)$ 
approximating $f$ to a desired precision. This proves 
the first part of the theorem.

For the last statement in the theorem, we precompose $f$ by a continuous map 
$\psi:P\to P$ which maps a small neighbourhood
$Q'_0\subset P$ of $Q$ to itself, it retracts a neighbourhood
$Q_0\subset Q'_0$ of $Q$ onto $Q$, and it equals the identity map on 
$P\setminus Q'_0$. (Such $\psi$ exists since $Q$ is a strong deformation 
neighbourhood retract in $P$.) This yields a continuous map 
$f_0=f\circ \psi : P\to \Lambda^r_\Oscr(\overline \Omega,Y)$
which approximates $f$, it agrees with $f$ on $Q$, and such that 
$f_0|_{Q_0}:Q_0 \to \Oscr(\overline \Omega,Y)$. Applying the 
above procedure to $f_0$ yields a map 
$\tilde f:P\to \Oscr(\overline \Omega,Y)$ approximating
$f$ which agrees with $f$ on $Q$.
\end{proof}

%
%
%
%
\section{Cartan's lemma and Theorem A for vector bundles of class 
$\Lambda^r_\Oscr$}
\label{sec:vectorbundles}

The notion of a vector bundle of Zygmund class 
$\Lambda^r_\Oscr(\overline\Omega)$, $r>0$, 
was introduced in Subsect.\ \ref{ss:Lr-bundles}. 
In this section, we prove the following version of Cartan's Theorem A for 
such vector bundles. 

%
%
\begin{theorem}\label{th:TheoremA}
Let $\overline \Omega$ be a compact, smoothly bounded, 
strongly pseudoconvex domain in a Stein manifold $X$,  
and let $\pi:E\to \overline \Omega$ be a vector bundle 
of class $\Lambda^r_\Oscr(\overline\Omega)$ for some $r>0$. 
There exist finitely many sections in $\Gamma^r_\Oscr(\overline\Omega,E)$ 
spanning every fibre $E_x:=\pi^{-1}(x)$ for $x\in \overline \Omega$.
\end{theorem}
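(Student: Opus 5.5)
We will prove Theorem~\ref{th:TheoremA} by the same bumping scheme as in the proof of Theorem~\ref{th:approximation}, combined with a Cartan splitting lemma for $GL_n(\C)$-valued maps of class $\Lambda^r_\Oscr$. We reduce first to a local statement: every point $x_0\in\overline\Omega$ has a relatively open Lipschitz neighbourhood $U\subset\overline\Omega$ over which $E$ is trivial with a frame in $\Lambda^r_\Oscr(U,E)$. It then suffices to show the following. Given a finite collection of local frames, each defined on some $U_i$, we can find global sections in $\Gamma^r_\Oscr(\overline\Omega,E)$ which approximate any one of these frames on a slightly smaller compact set. If the approximation is close enough, the global sections are linearly independent at each point of that compact set. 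Taking finitely many such families, with the compacts covering $\overline\Omega$, spans every fibre $E_x$.

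\textbf{Step 1: decomposition of $\overline\Omega$ into convex bumps.} Using a strongly plurisubharmonic defining function and \cite[Lemma 5.10.3]{Forstneric2017E}, we write $\overline\Omega$ as the end of a finite sequence $A_0\subset A_1\subset\cdots\subset A_m=\overline\Omega$. Here $A_0$ is a small strongly convex ball in a chart where $E$ is trivial, and each $A_{k+1}=A_k\cup B_k$ with $(A_k,B_k)$ a special Cartan pair and $B_k$ contained in a trivialising chart $U_{j(k)}$. We will actually prove the stronger inductive statement that $E|_{A_k}$ is \emph{trivial} as a $\Lambda^r_\Oscr$ bundle. Every compact strongly pseudoconvex domain is not contractible, so this is false in general. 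The correct inductive statement is therefore the following: there are finitely many sections in $\Gamma^r_\Oscr(A_k,E)$ spanning $E$ over $A_k$, and, moreover, any section over $A_k$ can be approximated in $\Lambda^r(A_k)$ by sections over $A_{k+1}$.

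\textbf{Step 2: approximation of sections across a bump.} This is the additive $\dibar$ argument. Let $s\in\Gamma^r_\Oscr(A,E)$. On $B$ the bundle is trivial, so $s|_C$ is a $\C^n$-valued map in $\Lambda^r_\Oscr(C)$. The dilation plus Oka--Weil argument from the proof of Theorem~\ref{th:approximation} gives $g\in\Oscr(B,\C^n)$ close to $s$ in $\Lambda^r(C)$; we view $g$ as a section of $E|_B$. We then glue with the cutoff $\chi$ to get $u=\chi s+(1-\chi)g$. The form $\dibar u=(s-g)\dibar\chi$ is a $\dibar$-closed $E$-valued $(0,1)$-form, supported in $C$, where $E$ is trivial. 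To solve $\dibar\tilde u=\dibar u$ with $\Lambda^r$ estimates on $D$ for a nontrivial bundle, we need an $E$-valued version of Theorem~\ref{th:dibar}. The expected route to obtain it is multiplicative: prove a Cartan splitting lemma. For $\gamma\in\Lambda^r_\Oscr(C,GL_n(\C))$ close to $I$, we want $\gamma=\beta\alpha^{-1}$ with $\alpha\in\Lambda^r_\Oscr(A,GL_n(\C))$ and $\beta\in\Lambda^r_\Oscr(B,GL_n(\C))$, both close to $I$. We obtain this by the standard Newton/implicit function iteration. Linearising $\gamma=I+c$ gives $c=b-a$, which is solved by $a=\chi c$-type cutoffs corrected by the bounded operator of Theorem~\ref{th:dibar} on $D$. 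The Banach algebra property of $\Lambda^r$ makes the quadratic error terms controllable, and the implicit function theorem in Banach spaces then yields $\alpha,\beta$.

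\textbf{Step 3: extending frames.} Suppose $E|_A$ has a frame, or, more generally, a spanning family, and $E|_B$ has a holomorphic frame. Approximate the $A$-frame on $C$ by a frame over $B$, by Step~2 locally. The transition matrix $\gamma$ on $C$ is then close to $I$, and the splitting lemma glues the two frames into a frame over $D$ of class $\Lambda^r_\Oscr$. Where global triviality fails, we work instead with the surjective morphism $A\times\C^N\to E|_A$ given by spanning sections, and extend it across the bump in the same way. Surjectivity is an open condition, so it is preserved by the small perturbation. By induction on $k$ we obtain spanning sections on $A_m=\overline\Omega$.

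The main obstacle is the splitting lemma in Step~2. Specifically, we must check that the Newton scheme converges in $\Lambda^r$ norms, with no loss of derivatives, using only the boundedness of $T_1$ from Theorem~\ref{th:dibar} on the strongly pseudoconvex domain $D$. We must also check that composing cutoffs with $\Lambda^r$ maps keeps us inside the algebra $\Lambda^r_\Oscr$. We will try first the direct contraction-mapping formulation on the Banach space $\Lambda^r_\Oscr(C,\mathfrak{gl}_n)$, since $T_1$ loses nothing.
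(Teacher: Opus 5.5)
Your proposal has two genuine gaps.

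\textbf{Extending across a bump (Step 3).} The remark that ``surjectivity is an open condition'' only controls the new sections where they are small perturbations of the old ones, that is, on $A$. Nothing in your argument makes them span on $\overline{B\setminus A}$.

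Your $B$-side data come from Step 2 (dilation plus Oka--Weil for vector- or matrix-valued functions). Such approximants are close to the $A$-data on $C$, but they need not be invertible, resp.\ of full rank, away from $C$. This is a nonlinear approximation problem, and linear Oka--Weil does not solve it. For a spanning family of $n>m=\rank E$ sections, you also never construct the near-identity transition matrix in $\Lambda^r_\Oscr(C,GL_n)$; it is not determined by the two $n$-tuples.

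The paper's proof of Lemma \ref{lem:ext-to-bump} avoids both problems:
\begin{itemize}
\item It keeps the honest frame $g_1,\ldots,g_m$ of $E|_B$, pads it with $g_{m+1}=\cdots=g_n=0$, and writes $(e_1,\ldots,e_n)^t=\Gamma'(g_1,\ldots,g_m)^t$ on $C$, with $\Gamma'$ of rank $m$.
\item It completes $\Gamma'$ to $\Gamma=(\Gamma',\Gamma'')\in\Lambda^r_\Oscr(C,GL_n)$ using a complementary subbundle (Heunemann's results plus approximation on the convex set $C$).
\item This $\Gamma$ is far from $I$, so it needs the full Lemma \ref{lem:Cartan}. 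Its reduction to the near-identity case supplies exactly what you are missing: Theorem \ref{th:approximation} together with the Oka property of $GL_n$, applied to the null-homotopic map $\Gamma$ on the contractible set $C$, yields $\tilde\gamma\in\Oscr(B,GL_n)$ approximating $\Gamma$ on $C$.
\item Then $\Gamma=\alpha^{-1}\beta$ and $\alpha(e_1,\ldots,e_n)^t=\beta(g_1,\ldots,g_n)^t$ on $C$. The glued sections span over $B$ because $\beta$ is invertible there.
\end{itemize}
Alternatively, you could repair your own route. Approximate the $A$-side data on $C$ by holomorphic maps from $B$ into the homogeneous (hence Oka) manifold of surjective linear maps $\C^n\to\C^m$, and build the transition matrix with a right inverse. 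Either way, an Oka-type rather than a linear approximation on $B$ is required.

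Your near-identity splitting via the implicit function theorem (cutoff plus the bounded operator of Theorem \ref{th:dibar}, as in Lemma \ref{lem:Cousin}) is the right tool for the remaining half. With it, the $E$-valued $\dibar$ problem you raise in Step 2 never arises.

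\textbf{The starting set.} A chain of special Cartan pairs cannot start at a small ball unless $\overline\Omega$ is contractible. Since $B$ and $C=A\cap B$ are convex, $A\cup B$ is homotopy equivalent to $A$. This is why the paper applies \cite[Lemma 5.10.3]{Forstneric2017E} only across a region $\{c\le\rho\le 0\}$ free of critical points of $\rho$. It is also the real source of the contradiction you ran into, not the inductive hypothesis; and the sentence ``every compact strongly pseudoconvex domain is not contractible'' is false as written.

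The paper instead takes $A_0=\{\rho\le c\}$, with $c<0$ chosen so that $\rho$ has no critical values in $[c,0]$. Since $A_0\subset\Omega$ and $E|_\Omega$ is a holomorphic bundle on a Stein manifold, the classical Theorem A gives a holomorphic epimorphism $A_0\times\C^n\to E|_{A_0}$ to start the induction.

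Your preliminary ``reduction to a local statement'' is neither proved nor needed. Approximating local frames near arbitrary points, including boundary points, by global sections would itself require $\Lambda^r$ estimates for $E$-valued $\dibar$ on $\overline\Omega$.
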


The classical Theorem A of Henry Cartan gives such a statement 
for holomorphic vector bundles on finite dimensional Stein spaces.
For vector bundles of class $\Ascr^r(\overline\Omega)$ with $r\in\Z_+$,
and for more general coherent analytic sheaves of this class,
the analogue of Theorem \ref{th:TheoremA} is due to 
Leiterer \cite{Leiterer1976II} for domains in Euclidean spaces and
to Heumenann \cite[Theorems 2, 6]{Heunemann1986II} for domains
in Stein manifolds.

\begin{remark}\label{rem:TheoremA}
An equivalent statement of Theorem \ref{th:TheoremA} 
is that every vector bundle $E\to \overline \Omega$ as in 
the theorem admits a vector bundle epimorphism
$\Phi:\overline\Omega\times \C^m \to E$ of class
$\Lambda^r_\Oscr(\overline\Omega)$ for some $m\in\N$.
Indeed, if $\xi_1,\ldots,\xi_m:\overline\Omega\to E$
are sections of class $\Lambda^r_\Oscr(\overline\Omega)$ 
spanning every fibre of $E$, then the map 
$\Phi:\overline\Omega\times \C^m \to E$ given by 
\begin{equation}\label{eq:Phi}
	\Phi(x,z_1,\ldots,z_m)= \sum_{i=1}^m z_i\xi_i(x) \in E_x=\pi^{-1}(x),
	\quad x\in\overline\Omega,\ z=(z_,\ldots,z_m)\in\C^m
\end{equation}
is a vector bundle epimorphism of class 
$\Lambda^r_\Oscr(\overline\Omega)$. Conversely,
given such an epimorphism, the images of standard basis sections
of the trivial bundle $\overline\Omega\times \C^m$ generate each fibre of $E$. 
\end{remark}

We also have the following embedding result for vector bundles
of class $\Lambda^r_\Oscr(\overline\Omega)$. 

\begin{corollary}\label{cor:embedding}
Given a vector bundle $\pi:E\to \overline \Omega$ of class
$\Lambda^r_\Oscr(\overline\Omega)$ as in Theorem \ref{th:TheoremA},
there exists a vector bundle embedding
$E\hookrightarrow \overline \Omega\times \C^m$ of class
$\Lambda^r_\Oscr(\overline\Omega)$ for some $m\in\N$.
\end{corollary}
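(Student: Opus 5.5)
The plan is to derive Corollary \ref{cor:embedding} from Theorem \ref{th:TheoremA}, applied not to $E$ itself but to its dual bundle $E^*\to\overline\Omega$. The first step is to check that $E^*$ is a vector bundle of class $\Lambda^r_\Oscr(\overline\Omega)$. If $E$ is given by transition maps $A_{i,j}\in \Lambda^r_\Oscr(U_{i,j},GL_n(\C))$, then $E^*$ has transition maps $(A_{i,j}^{T})^{-1}$. Matrix inversion is a holomorphic map on $GL_n(\C)$, so postcomposition preserves the class $\Lambda^r$ by Lemma \ref{lem:composition} and Remark \ref{rem:composition}, and holomorphy on $\Omega$ is clearly preserved. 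The cocycle conditions carry over in the usual way.

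Next, Theorem \ref{th:TheoremA} gives sections $\eta_1,\ldots,\eta_m\in\Gamma^r_\Oscr(\overline\Omega,E^*)$ spanning $E^*_x$ for every $x\in\overline\Omega$. Define
\[
	\Psi:E\to\overline\Omega\times\C^m,\qquad
	\Psi(v)=\bigl(\pi(v),\eta_1(\pi(v))(v),\ldots,\eta_m(\pi(v))(v)\bigr).
\]
This is the transpose of the epimorphism $\overline\Omega\times\C^m\to E^*$ from Remark \ref{rem:TheoremA}. In a local trivialization over $U_i$ it is given by the $m\times n$ matrix whose rows are the local representatives of $\eta_k$. These entries lie in $\Lambda^r_\Oscr(U_i)$, so $\Psi$ is a vector bundle morphism of class $\Lambda^r_\Oscr(\overline\Omega)$.

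Injectivity on each fibre is elementary linear algebra. If $v\in E_x$ is nonzero, some functional in $E^*_x$ does not vanish on $v$. Since the $\eta_k(x)$ span $E^*_x$, some $\eta_k(x)(v)\neq 0$. Hence the local matrix has rank $n$ everywhere, and $\Psi$ is fibrewise injective.

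It remains to see that $\Psi$ is an embedding in the category of $\Lambda^r_\Oscr$ bundles, meaning its image is a subbundle of class $\Lambda^r_\Oscr$ and $\Psi$ is an isomorphism onto it. This is the only mildly delicate step, and I expect it to be routine. Near any $x_0$, choose $n$ rows forming an invertible $n\times n$ minor; its inverse is again of class $\Lambda^r_\Oscr$ near $x_0$ by Cramer's rule and the Banach algebra property. Using this inverse, I would build local frames of the image and a local left inverse of $\Psi$ of the same class. The image frames together with suitable constant vectors give local trivializations of $\overline\Omega\times\C^m$ adapted to the image, with $\Lambda^r_\Oscr$ transition maps. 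This shows $\Psi(E)$ is a $\Lambda^r_\Oscr$ subbundle and $\Psi:E\to\Psi(E)$ is a $\Lambda^r_\Oscr$ isomorphism, which proves the corollary.
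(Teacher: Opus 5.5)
Your proposal is correct and is the paper's argument: the paper applies Theorem \ref{th:TheoremA} (in the form of Remark \ref{rem:TheoremA}) to $E^*$ to obtain an epimorphism $\Phi:\overline\Omega\times\C^m\to E^*$, and takes its dual $\Phi^*:E\to\overline\Omega\times\C^m$ as the embedding, which is exactly your map $\Psi$. You also verify details the paper leaves implicit, namely that $E^*$ is of class $\Lambda^r_\Oscr(\overline\Omega)$ and that the image of $\Psi$ is a $\Lambda^r_\Oscr$ subbundle, using local invertible minors completed by constant vectors.
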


\begin{proof}
Theorem \ref{th:TheoremA} gives a vector bundle epimorphism
$\Phi: \overline \Omega \times \C^m \to E^*$ of class
$\Lambda^r_\Oscr(\overline\Omega)$. Its dual $\Phi^*:(E^*)^*=E \to 
(\overline \Omega \times \C^m)^* \cong \overline \Omega \times \C^m$
is a vector bundle embedding of class 
$\Lambda^r_\Oscr(\overline\Omega)$.
\end{proof}

Denote by $M_n=M_n(\C)\cong \C^{n\times n}$ 
the space of complex $n\times n$ matrices
and note that $GL_n=GL_n(\C)\subset M_n$. 
By $I\in GL_n$ we denote the identity matrix. 
In the proof of Theorem \ref{th:TheoremA} we shall use the following
version of Cartan's lemma. See also the parametric version,
Lemma \ref{lem:Cartan-par}, and \cite{Cartan1940}
or \cite[Section VI.\ E]{GunningRossi1965} for the classical 
Cartan's lemma.

%
%
\begin{lemma}\label{lem:Cartan}
Let $(A,B)$ be a Cartan pair in a complex manifold $X$
(see Definition \ref{def:Cartan-pair}) such that $C:=A\cap B$
is holomorphically convex in $B$.
Given a map $\gamma\in \Lambda^r_\Oscr(C,GL_n)$ 
which is homotopic to the constant map $C\ni x\mapsto I\in GL_n$ 
by a path in $\Cscr(C,GL_n)$ and a number $\epsilon>0$, there are 
maps $\alpha \in \Lambda^r_\Oscr(A,GL_n)$ and 
$\beta \in \Lambda^r_\Oscr(B,GL_n)$ such that 
$\|\alpha-I\|_{\Lambda^r(A)} <\epsilon$ and 
\begin{equation}\label{eq:split}
	\gamma = \alpha^{-1} \cdotp \beta\ \ \ \text{holds on $C$}.
\end{equation}
\end{lemma}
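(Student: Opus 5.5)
We follow the classical proof of Cartan's lemma: approximation reduces the problem to the case where $\gamma$ is close to $I$, and a $\dibar$-correction with estimates from Theorem \ref{th:dibar} then solves the nonlinear splitting problem near the identity.

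\emph{Step 1: reduction to $\gamma$ close to $I$.} Since $\gamma$ is homotopic to $I$ through continuous maps $C\to GL_n$, and $C$ is holomorphically convex in $B$ (hence $\Oscr(B)$-convex), the Oka--Grauert principle for the homogeneous manifold $GL_n$ applies. Via Theorem \ref{th:approximation} and its parametric form Theorem \ref{th:approximation-par}, first approximate $\gamma$ in $\Lambda^r(C,GL_n)$ by a map holomorphic on a neighbourhood of $C$, still null-homotopic. Then use the standard Runge-type approximation for null-homotopic holomorphic maps to $GL_n$ on a neighbourhood of the $\Oscr(B)$-convex set $C$. This gives $\beta_0\in\Oscr(B,GL_n)$ with $\|\beta_0^{-1}\gamma - I\|_{\Lambda^r(C)}$ (or $\|\gamma\beta_0^{-1}-I\|$, as convenient) as small as we like. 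Cauchy estimates on slightly larger sets convert uniform approximation into $\Lambda^r$ approximation, as in the proof of Theorem \ref{th:approximation}. Replacing $\gamma$ by $\gamma\beta_0^{-1}$ and absorbing $\beta_0$ into $\beta$ at the end, we may assume $\|\gamma-I\|_{\Lambda^r(C)}<\delta$ for a small $\delta$ to be chosen.

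\emph{Step 2: linear splitting with estimates.} As in the proof of Theorem \ref{th:approximation}, take a smooth cutoff $\chi$ equal to $1$ near $\overline{A\setminus B}$ and $0$ near $\overline{B\setminus A}$. For $c\in\Lambda^r_\Oscr(C,M_n)$ set
\[
	a(c)=\chi c - T\bigl(c\,\dibar\chi\bigr)\ \text{on } A, \qquad
	b(c)=(\chi-1)c - T\bigl(c\,\dibar\chi\bigr)\ \text{on } B,
\]
where $c\,\dibar\chi$ is extended by zero to a $\dibar$-closed form in $\Lambda^r_{0,1}(D)$, $D=A\cup B$, and $T$ is the operator of Theorem \ref{th:dibar} on $D$. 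Then $a(c)\in\Lambda^r_\Oscr(A,M_n)$ and $b(c)\in\Lambda^r_\Oscr(B,M_n)$ are holomorphic, satisfy $c=a(c)-b(c)$ on $C$, and depend linearly and boundedly on $c$, with norms at most $K\|c\|_{\Lambda^r(C)}$.

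\emph{Step 3: nonlinear splitting near $I$.} This is the main step. Write $\gamma=I+c$ and seek $\alpha=I+a$, $\beta=I+b$ with $(I+a)(I+c)=I+b$ on $C$, i.e.\ $b-a=c+ac$. Two routes are available.

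The first is an iteration: $a_{k+1}=-a(c+a_kc)$, $b_{k+1}=-b(c+a_kc)$. The map $a\mapsto -a(c+ac)$ is a contraction on a small ball of $\Lambda^r(A,M_n)$ because $\Lambda^r$ is a Banach algebra, with Lipschitz constant $\lesssim K\|c\|$. Its fixed point gives the solution, and the restriction of $a$ to $C$ lies in $\Lambda^r(C)$ since restriction to the Lipschitz set $C$ is bounded.

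The second is the implicit function theorem for the smooth map $(a,b)\mapsto (I+a)^{-1}(I+b)$ on the Banach spaces $\Lambda^r_\Oscr(A,M_n)\times\Lambda^r_\Oscr(B,M_n)\to\Lambda^r_\Oscr(C,M_n)$. Its differential at $(0,0)$ is $(a,b)\mapsto b-a$, which has the bounded right inverse $c\mapsto(-a(c),-b(c))$ from Step 2.

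Either route yields $\|a\|_{\Lambda^r(A)}\lesssim\delta<\epsilon$. For small $\delta$, $I+a$ and $I+b$ are invertible pointwise, hence lie in $GL_n$, and they are holomorphic in the interiors. This proves \eqref{eq:split} for the reduced $\gamma$, and undoing Step 1 gives the lemma.

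The delicate points are Step 1, namely approximating by \emph{invertible} holomorphic maps on $B$, which is where null-homotopy and the holomorphic convexity of $C$ in $B$ enter, and checking that the bound on the operator $T$ is uniform. The estimate itself is exactly Theorem \ref{th:dibar} applied on $D$.
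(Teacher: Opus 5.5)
Your proof is the paper's proof. Step 1 is its final reduction: approximate $\gamma$ by a map holomorphic near $C$ via Theorem \ref{th:approximation}, then by $\tilde\gamma\in\Oscr(B,GL_n)$ via the Oka principle, split $\gamma\tilde\gamma^{-1}$, and absorb $\tilde\gamma$ into $\beta$. Step 2 is Lemma \ref{lem:Cousin}. Your second route in Step 3 is the paper's inverse function theorem argument. The paper's map $\Phi(c)=(I-\Acal c)^{-1}(I-\Bcal c)$ is exactly your map $(a,b)\mapsto(I+a)^{-1}(I+b)$ precomposed with the right inverse $c\mapsto(-\Acal c,-\Bcal c)$, so its differential at $0$ is the identity. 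Your first route, Cartan's classical contraction $a\mapsto-\Acal(c+ac)$, is an equally valid and more hands-on alternative; it gives the bound $\|a\|_{\Lambda^r(A)}\lesssim\|c\|_{\Lambda^r(C)}$ directly from the Banach algebra property.

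There is one concrete error in Step 2. You take the cutoff with the convention from the proof of Theorem \ref{th:approximation}: $\chi=1$ near $\overline{A\setminus B}$ and $\chi=0$ near $\overline{B\setminus A}$. That works there because the data $f$ lives on all of $A$. Here $c$ is defined only on $C$, so $\chi c$ equals $c$ near $\overline{A\setminus B}$ and does not extend to $A$. Likewise $(\chi-1)c=-c$ near $\overline{B\setminus A}$ does not extend to $B$. So $a(c)$ and $b(c)$ are undefined as written.

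The fix is to reverse the roles, as in Lemma \ref{lem:Cousin}: take $\chi=0$ near $\overline{A\setminus B}$ and $\chi=1$ near $\overline{B\setminus A}$. Then $\chi c$ and $(\chi-1)c$ extend by zero to $A$ and $B$, and $c\,\dibar\chi$ extends by zero to a $\dibar$-closed form in $\Lambda^r_{0,1}(D)$. With this change your formulas, the identity $a(c)-b(c)=c$, and the rest of the argument go through.

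Two minor points. In Step 1 only $\gamma\beta_0^{-1}$ is compatible with absorbing $\beta_0$ into $\beta$ on the right, since splitting $\beta_0^{-1}\gamma$ would leave $\beta_0$ on the wrong side; you do choose the correct one. Also, your worry about uniformity of $T$ does not arise in the nonparametric statement, because $T$ is a single fixed bounded operator on $D$.
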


\begin{proof}
We first construct the product splitting \eqref{eq:split}
for $\gamma$ close to $I\in GL_n$. Write 
$\gamma=I+c$ with $c\in \Lambda^r_\Oscr(C,M_n)$.
We have the following solution to the Cousin problem 
in $\Lambda^r_\Oscr$.

%
%
\begin{lemma}\label{lem:Cousin}
There are bounded linear operators 
$\Acal:\Lambda^r_\Oscr(C)\to \Lambda^r_\Oscr(A)$,
$\Bcal:\Lambda^r_\Oscr(C)\to \Lambda^r_\Oscr(B)$ with
\begin{equation}\label{eq:AplusB}
	\Acal c - \Bcal c =c\ \ \ \text{for all}\ c\in \Lambda^r_\Oscr(C).
\end{equation}
\end{lemma}

\begin{proof}
Set $D=A\cup B$. Condition (ii) in Definition \ref{def:Cartan-pair}
implies that there is a smooth function $\chi: X \to[0,1]$ 
which equals $0$ on a neighbourhood of $\overline {A\setminus B}$ 
and equals $1$ on a neighbourhood of $\overline {B \setminus A}$. 
Given $c\in \Lambda^r_\Oscr(C)$, we have $c\chi \in \Lambda^r(A)$,
$c(1-\chi)\in \Lambda^r(B)$, and 
$\dibar (c \chi)=c\, \dibar \chi \in \Lambda^r_{0,1}(D)$ 
is a closed $(0,1)$-form on $D$ of class $\Lambda^r(D)$ 
supported on $C$. Let 
\[
	T=\dibar^* N: \{\omega\in \Lambda^r_{0,1}(D):\dibar \omega=0\}
	\to \Lambda^r(D)
\]
denote the (bounded, linear) canonical solution operator 
for the $\dibar$-equation for $(0,1)$-forms of class
$\Lambda^r(D)$; see Theorem \ref{th:dibar}. 
The linear operators on $\Lambda^r_\Oscr(C)$ defined by 
\[
	\Acal c= c \, \chi - T(c\, \dibar \chi)\in \Lambda^r_\Oscr(A), \quad 
	\Bcal c= c(\chi-1) - T(c\, \dibar \chi) \in \Lambda^r_\Oscr(B)
\] 
for all $c\in \Lambda^r_\Oscr(C)$ then satisfy the lemma.
\end{proof}

Applying Lemma \ref{lem:Cousin} componentwise gives
bounded linear operators 
\[
	\Acal:\Lambda^r_\Oscr(C,M_n) \to \Lambda^r_\Oscr(A,M_n),
	\quad 
	\Bcal:\Lambda^r_\Oscr(C,M_n) \to \Lambda^r_\Oscr(B,M_n)
\]
satisfying $\Acal-\Bcal=\Id$ on $\Lambda^r_\Oscr(C,M_n)$;
see \eqref{eq:AplusB}. We define a map 
$\Phi:U\to \Lambda^r_\Oscr(C,GL_n)$ on a small neighbourhood 
$U\subset \Lambda^r_\Oscr(C,M_n)$ of $c=0$ by
\[
	\Phi(c) = (I-\Acal c)^{-1} (I-\Bcal c)
	\in \Lambda^r_\Oscr(C,GL_n), \quad c\in U.
\]
Clearly, $\Phi$ is smooth and satisfies $\Phi(0)=I$ 
and $d\Phi_0 = \Acal - \Bcal =\Id$. It follows that 
$\Phi$ has a smooth right inverse $\Psi$ on a neighbourhood 
$V\subset \Lambda^r_\Oscr(C,GL_n)$ of $I$ such that $\Psi(I)=0$ and 
\[
	(\Phi\circ \Psi)(\gamma) = 
	\left(I-\Acal \circ \Psi(\gamma)\right)^{-1}  
	\left(I-\Bcal \circ\Psi(\gamma)\right) = \gamma 
	\quad \text{for all}\ \gamma\in V.
\]
The smooth operators $\wt \Acal:V\to \Lambda^r_\Oscr(A,GL_n)$,
$\wt \Bcal:V\to \Lambda^r_\Oscr(B,GL_n)$ defined by
\[
	\wt \Acal = I - \Acal \circ\Psi,\qquad 
	\wt \Bcal = I - \Bcal \circ\Psi
\] 
then provide a splitting 
$\gamma=(\wt \Acal \gamma)^{-1} (\wt \Bcal \gamma)$ 
for $\gamma\in V$ (see \eqref{eq:split}) satisfying 
\begin{equation}\label{eq:estAB}
	\|\wt \Acal \gamma-I\|_{\Lambda^r(A)} \le const 
	\|\gamma-I\|_{\Lambda^r(C)},
	\quad 
	\|\wt \Bcal \gamma-I\|_{\Lambda^r(B)} \le const 
	\|\gamma-I\|_{\Lambda^r(C)}
\end{equation}
for some $const>0$ depending only on $(A,B)$ and $r$.

This proves the lemma for maps $\gamma\in \Lambda^r_\Oscr(C,GL_n)$
near the constant map $C\ni x\mapsto I$.
The general case is obtained as follows.
By Theorem \ref{th:approximation} we can approximate 
$\gamma$ as closely as desired in $\Lambda^r_\Oscr(C)$ by 
a holomorphic map ${\gamma\,}' :U\to GL_n$ from an 
open neighbourhood $U\subset X$ of $C$. Since
$GL_n$ is an Oka manifold (every complex homogeneous
manifold is an Oka manifold by Grauert 
\cite{Grauert1957I,Grauert1957II}; see also 
\cite[Proposition 5.6.1]{Forstneric2017E}), 
$\gamma$ is homotopic to the constant map,
and $C$ is holomorphically convex in $B$, 
the Oka principle \cite[Corollary 5.4.5]{Forstneric2017E}
shows that ${\gamma\,}'$ can be approximated uniformly
on a compact neighbourhood $C'\subset U$ of $C$ by a holomorphic
map $\tilde \gamma\in \Oscr(B,GL_n)$. 
Then, $\gamma= (\gamma {\tilde \gamma}^{-1}) \tilde\gamma$
on $C$, and $\gamma {\tilde \gamma}^{-1}$ is close to $I$ 
in $\Lambda^r(C,GL_n)$. By the first part, we have
$\gamma {\tilde \gamma}^{-1}=\alpha^{-1}\tilde \beta$
with $\alpha \in \Lambda^r_\Oscr(A,GL_n)$ close to $I$ and 
$\tilde \beta \in \Lambda^r_\Oscr(B,GL_n)$.
Setting $\beta=\tilde\beta \tilde\gamma\in \Lambda^r_\Oscr(B,GL_n)$ 
gives $\gamma=\alpha^{-1}\beta$ as in \eqref{eq:split}.
\end{proof}

%
%
Lemma \ref{lem:Cartan} has the following generalisation
to the parametric case.

\begin{lemma}\label{lem:Cartan-par}
Assume that $X$ and $(A,B)$ are as in Lemma  \ref{lem:Cartan}.
Given a compact Hausdorff space $P$, a closed subspace 
$Q\subset P$ which is a strong neighbourhood deformation retract, 
and a continuous map 
$\gamma : P\to \Lambda^r_\Oscr(C,GL_n)$ such that $\gamma(p)=I$
for all $p\in Q$ and there is a homotopy 
$\gamma_t:P\to \Cscr(C,GL_n)$ $(t\in [0,1])$ which is fixed 
on $Q$ such that $\gamma_0=I$ and $\gamma_1=\gamma$,
there are continuous maps 
\[
	\alpha:P \to \Lambda^r_\Oscr(A,GL_n),\qquad 
	\beta:P\to  \Lambda^r_\Oscr(B,GL_n)
\]
such that $\|\alpha-I\|_{\Lambda^r(A)}$ is arbitrarily small,
$\alpha|_Q=\beta|_Q=I$, and 
$\gamma = \alpha^{-1} \cdotp \beta$ holds on $C$.
\end{lemma}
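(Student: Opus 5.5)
The plan is to follow the proof of Lemma \ref{lem:Cartan} and parametrise each step. The local part already is parametric. The operators $\Acal,\Bcal$ of Lemma \ref{lem:Cousin} are bounded and linear. The smooth right inverse $\Psi$ of $\Phi(c)=(I-\Acal c)^{-1}(I-\Bcal c)$ is defined on a neighbourhood $V$ of $I$ in $\Lambda^r_\Oscr(C,GL_n)$. Hence the maps $\wt\Acal=I-\Acal\circ\Psi$ and $\wt\Bcal=I-\Bcal\circ\Psi$ are smooth (in particular continuous) on $V$ and satisfy the estimates \eqref{eq:estAB}. Since $\Psi(I)=0$, we have $\wt\Acal(I)=\wt\Bcal(I)=I$. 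So if $\gamma(p)\in V$ for all $p\in P$, we set $\alpha(p)=\wt\Acal(\gamma(p))$ and $\beta(p)=\wt\Bcal(\gamma(p))$. These are continuous in $p$, equal $I$ on $Q$, and $\alpha$ is close to $I$. The task therefore reduces to writing $\gamma=\gamma'\cdot\tilde\gamma$ on $C$ with $\gamma'$ close to $I$ uniformly in $p$, and with $\tilde\gamma:P\to\Oscr(B,GL_n)$ continuous and $\tilde\gamma|_Q=I$. Then $\beta=\wt\Bcal(\gamma\tilde\gamma^{-1})\cdot\tilde\gamma$ finishes the proof.

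To construct $\tilde\gamma$, first apply Theorem \ref{th:approximation-par} with the pair $(P,Q)$. Since $\gamma|_Q=I$ is holomorphic on a neighbourhood of $C$, this gives a continuous family $\gamma':P\to\Oscr(C,GL_n)$. It approximates $\gamma$ in $\Lambda^r(C)$ uniformly in $p$, agrees with $\gamma$ (so equals $I$) on $Q$, and, by the proof of that theorem, consists of maps holomorphic on a fixed neighbourhood $U$ of $C$. Closeness to $\gamma$, together with the given homotopy $\gamma_t$ fixed on $Q$, shows that $\gamma'$ is null-homotopic in $\Cscr(C,GL_n)$ relative to $Q$. Next, use the parametric Oka principle with approximation for maps into the Oka manifold $GL_n$ (\cite[Theorem 5.4.4]{Forstneric2017E} in its parametric form, where $C$ is holomorphically convex in $B$). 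It yields a continuous family $\tilde\gamma:P\to\Oscr(B,GL_n)$ that approximates $\gamma'$ uniformly on a compact neighbourhood $C'\subset U$ of $C$, with $\tilde\gamma|_Q=\gamma'|_Q=I$. By Cauchy estimates, $\tilde\gamma$ is then close to $\gamma'$, and hence to $\gamma$, in $\Lambda^r(C)$ uniformly in $p$. Therefore $\gamma\tilde\gamma^{-1}\in V$ for all $p$, and it equals $I$ on $Q$.

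The main obstacle is this parametric Oka step relative to $Q$. One has to verify that the homotopy hypothesis passes from $\gamma$ to its holomorphic approximant $\gamma'$ relative to $Q$, and that the parametric CAP/Oka principle is applied with the parameter pair $(P,Q)$ and the convex-in-$B$ set $C$. For a simpler alternative in the case of $GL_n$, I would first try Grauert's classical parametric Oka principle for complex Lie groups. This avoids general Oka theory but needs the same relative homotopy bookkeeping. Uniform control of the Cauchy estimates over compact $P$ follows from continuity and compactness.
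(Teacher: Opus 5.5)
Your proposal is correct and follows essentially the same route as the paper: use Theorem \ref{th:approximation-par} and the parametric Oka principle, exactly as in the last part of the proof of Lemma \ref{lem:Cartan}, to reduce to $\gamma$ uniformly close to $I$; then apply the local splitting maps $\wt\Acal,\wt\Bcal$, which depend continuously on the parameter, while $\Psi(I)=0$ and $\tilde\gamma|_Q=I$ give $\alpha|_Q=\beta|_Q=I$. You fill in details that the paper's brief sketch leaves implicit, notably the null-homotopy of the approximant relative to $Q$, but the argument is the same.
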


\begin{proof}
By Theorem \ref{th:approximation-par} and the argument
in the last part of the proof of Lemma \ref{lem:Cartan}, we can 
reduce to the case when $\gamma$ is close to the 
constant map $P\to I\in GL_n$. Since the splitting 
\eqref{eq:AplusB} in Lemma \ref{lem:Cousin} is given by bounded 
linear operators, it also applies to the parametric case. The proof
of the first part of Lemma \ref{lem:Cartan} then carries over verbatim.
\end{proof}

\begin{remark} \label{rem:Liegroup} 
Lemmas \ref{lem:Cartan} and \ref{lem:Cartan-par} also hold, with the
same proofs, for maps with values in any complex Lie group $G$ in place
of $GL_n(\C)$. In this case, we replace the matrix algebra
$M_n$ (which is the Lie algebra of $GL_n(\C)$)
by the Lie algebra of $G$.
Furthermore, the analogous results hold with the spaces
$\Lambda^r_\Oscr(\overline\Omega,G)$ replaced by any space
$\Fcal_\Oscr(\overline\Omega,G)$ described in Remark \ref{rem:Fcal}.
\end{remark}

The main step in the proof of Theorem \ref{th:TheoremA} is 
given by the following lemma.

%
%
\begin{lemma}\label{lem:ext-to-bump}
Assume that $(A,B)$ is a special Cartan pair 
(see Definition \ref{def:Cartan-pair}).
Let $\pi:E\to D=A\cup B$ be a vector bundle of class
$\Lambda_\Oscr^r(D)$, $r>0$, such that $E|_B$ is 
$\Lambda_\Oscr^r(B)$-isomorphic to a trivial bundle.
Then, every vector bundle epimorphism 
$\Phi:A\times\C^n \to E|_A$ of class $\Lambda_\Oscr^r(A)$
can be approximated in $\Lambda^r(A)$ by vector bundle 
epimorphisms $\wt \Phi:D\times\C^n \to E$ of class $\Lambda_\Oscr^r(D)$. 
\end{lemma}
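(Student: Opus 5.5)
We trivialise $E|_B$, approximate $\Phi$ over the intersection by an epimorphism that is defined over all of $B$, and then glue the two pieces with a near-identity Cartan splitting from Lemma \ref{lem:Cartan}.

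\emph{Step 1: trivialise and transfer.} Fix a $\Lambda^r_\Oscr(B)$-isomorphism $\tau:E|_B\to B\times\C^k$, where $k=\rank E$, and set $C=A\cap B$. Over $C$, the map $\tau\circ\Phi|_C$ is a $k\times n$ matrix function $F\in\Lambda^r_\Oscr(C,\C^{k\times n})$ of rank $k$ at every point. The set of surjective linear maps $\C^n\to\C^k$ is an open subset of $\C^{k\times n}$; call it $\Sigma$. It is a complex homogeneous manifold under $GL_k\times GL_n$, hence Oka.

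\emph{Step 2: approximate over $B$.} The set $C$ is strongly convex in a coordinate neighbourhood of $B$, so it is holomorphically convex in $B$ and contractible. By Theorem \ref{th:approximation}, applied with $Y=\Sigma$, we approximate $F$ in $\Lambda^r(C)$ by a map $F'$ holomorphic on a neighbourhood of $C$. Since $C$ is contractible, $F'$ is null-homotopic. The Oka property of $\Sigma$ (\cite[Corollary 5.4.5]{Forstneric2017E}), used exactly as in the proof of Lemma \ref{lem:Cartan}, then gives $G\in\Oscr(B,\Sigma)$ that approximates $F'$ uniformly near $C$. Cauchy estimates upgrade this to approximation in $\Lambda^r(C)$. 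Thus $G$ is close to $F$ in $\Lambda^r(C)$.

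\emph{Step 3: relate $F$ and $G$ by $\gamma\in GL_n$ (main obstacle).} We need $\gamma\in\Lambda^r_\Oscr(C,GL_n)$, close to $I$, with $F=G\gamma$ on $C$. Let $G^+=G^*(GG^*)^{-1}$ be the pointwise right inverse of $G$; it is not holomorphic because of the conjugation. To avoid that, use instead a holomorphic right inverse $S\in\Oscr(B',\C^{n\times k})$ of $G$ on a neighbourhood $B'$ of $B$. This exists because the bundle of right inverses of $G$ is an affine bundle over a Stein set, so it has holomorphic sections by Theorem B. Let $K=\ker G$, a holomorphic subbundle of $B\times\C^n$ that is trivial over the convex set $B$, and let $\Pi=I-SG$ be the holomorphic projection onto $K$. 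Now define
\[
\gamma=\Pi+SF \quad\text{on } C .
\]
Then $G\gamma=G\Pi+GSF=0+F=F$. Also $\gamma-I=S(F-G)$, which is small in $\Lambda^r(C)$, so $\gamma$ is invertible and lies in $\Lambda^r_\Oscr(C,GL_n)$. The map $\gamma$ is close to $I$, so it is homotopic to $I$ through invertible matrices.

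\emph{Step 4: split and glue.} Lemma \ref{lem:Cartan} (its first part, the near-identity case) gives $\gamma=\alpha^{-1}\beta$ on $C$, with $\alpha\in\Lambda^r_\Oscr(A,GL_n)$ and $\beta\in\Lambda^r_\Oscr(B,GL_n)$. By \eqref{eq:estAB}, both $\|\alpha-I\|_{\Lambda^r(A)}$ and $\|\beta-I\|_{\Lambda^r(B)}$ are bounded by a constant times $\|\gamma-I\|_{\Lambda^r(C)}$. On $C$ we then have $\Phi\alpha^{-1}=\tau^{-1}G\gamma\alpha^{-1}=\tau^{-1}G\beta^{-1}\cdot\beta\gamma\alpha^{-1}$, and $\beta\gamma\alpha^{-1}$ is not the identity, so the order of the factors must be arranged correctly. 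Apply Lemma \ref{lem:Cartan} to $\gamma^{-1}$ instead, writing $\gamma^{-1}=\alpha^{-1}\beta$, that is, $\gamma\alpha^{-1}=\beta^{-1}$. Then
\[
\Phi\alpha^{-1}=\tau^{-1}G\gamma\alpha^{-1}=\tau^{-1}G\beta^{-1}\quad\text{on } C .
\]
Define $\wt\Phi=\Phi\alpha^{-1}$ on $A$ and $\wt\Phi=\tau^{-1}G\beta^{-1}$ on $B$. The two definitions agree on $C$, so $\wt\Phi$ is a well-defined map of class $\Lambda^r_\Oscr(D)$. It is an epimorphism on each piece: on $A$ because $\Phi$ is one and $\alpha$ is invertible, on $B$ because $G$ takes values in $\Sigma$ and $\beta$ is invertible. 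Finally $\|\wt\Phi-\Phi\|_{\Lambda^r(A)}$ is controlled by $\|\alpha-I\|_{\Lambda^r(A)}$, hence by $\|F-G\|_{\Lambda^r(C)}$, which can be made as small as we like.

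I expect Step 3, producing a holomorphic $\gamma$ rather than one built from the pointwise inverse $G^+$, to be the delicate point; the holomorphic right inverse $S$ is what resolves it.
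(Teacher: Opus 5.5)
Step 3 has a genuine gap, and the gap is about the size of $S$, not its holomorphy. You first choose $G$ so that $\|F-G\|_{\Lambda^r(C)}$ is small, and only afterwards choose some holomorphic right inverse $S$ of $G$. The claim that $\gamma-I=S(F-G)$ is small then needs a bound on $\|S\|_{\Lambda^r(C)}$ that is uniform in $G$, and you have none.

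\begin{itemize}
\item Theorem B produces a section of the affine bundle of right inverses without any estimate.
\item $S+\Pi T$ is another right inverse for every holomorphic $T$, so the choice can be arbitrarily large.
\item There is no universal holomorphic choice $S=R(G)$ either: for $k=1$, $n=2$ one has $\Sigma=\C^2\setminus\{0\}$, and Hartogs forbids a holomorphic $R$ with $MR(M)\equiv 1$.
\item Fixing a right inverse of $F'$ first only moves the circularity to the choice of $F'$.
\end{itemize}
So, as written, nothing guarantees that $\gamma^{-1}$ lies in the neighbourhood of $I$ where the near-identity splitting of Lemma \ref{lem:Cartan} applies.

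The missing ingredient is a right inverse of $F$ itself, fixed before $G$ is chosen: $S_F\in\Lambda^r_\Oscr(C,\C^{n\times k})$ with $FS_F=I_k$. Given it, for $G$ close to $F$ the matrix $GS_F=I_k+(G-F)S_F$ is invertible. Then $S:=S_F(GS_F)^{-1}$ is a right inverse of $G$ on $C$, which is all you need since $\gamma$ lives on $C$, and $\|S\|_{\Lambda^r(C)}\le c\|S_F\|_{\Lambda^r(C)}$. Consequently $\gamma=I+S(F-G)$ satisfies $G\gamma=F$ and $\|\gamma-I\|_{\Lambda^r(C)}\le c'\|F-G\|_{\Lambda^r(C)}$.

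However, $S_F$ exists if and only if $\ker F$ has a complement of class $\Lambda^r_\Oscr(C)$ in $C\times\C^n$. Up to transposition, this is exactly the complementation (via Heunemann's theorem) that the paper uses to complete $\Gamma'=F^t$ to $\Gamma\in\Lambda^r_\Oscr(C,GL_n)$; indeed the first $k$ columns of $(\Gamma^t)^{-1}$ form such an $S_F$. Two other sources would also work: Theorem \ref{th:complement} for the trivial bundle, or a holomorphic right inverse $R(x,M)$ on a Stein neighbourhood of the graph of $F$ in $X\times\Sigma$, with $S_F=R(\cdotp,F)$.

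With this added, your argument is a valid reorganisation of the paper's:
\begin{itemize}
\item The paper completes $\Gamma'$ to an invertible matrix, which also needs triviality of the complement over the contractible set $C$. It then applies the full Lemma \ref{lem:Cartan}, whose reduction to the near-identity case uses the Oka property of $GL_n$.
\item You use the Oka property of $\Sigma$ to approximate over $B$, and then need only a complement of $\ker F$ and the near-identity splitting.
\end{itemize}
Your Step 4, after passing to $\gamma^{-1}$, is correct.

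Finally, a caveat that applies equally to the paper's proof, through Theorem \ref{th:approximation} as used inside Lemma \ref{lem:Cartan}. Your Step 2 needs approximation in the $\Lambda^r$ norm by maps holomorphic near $C$, but maps of class $\Cscr^1$ up to the boundary are not $\Lambda^r$-dense. For $f(z)=(1-z)^\alpha$ on $\{|z|\le 1\}$, $0<\alpha<1$, and any $g\in\Cscr^1$, one has $t^{-\alpha}|(f-g)(1)-(f-g)(1-t)|\to 1$ as $t\downarrow 0$, hence $\|f-g\|_{\Lambda^\alpha}\ge 1$.

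Now take $E=D\times\C$, $n=1$, and $\Phi$ given by a nonvanishing function with such a singularity at a point of $bA\cap\mathring D$. Every admissible $\wt\Phi$ is holomorphic, hence $\Cscr^1$, near that point. So the approximation clause of the statement cannot hold for every $\Phi$, and this step needs an additional hypothesis on $\Phi$ in both your argument and the paper's.
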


\begin{proof}
Let $e_1,\ldots,e_n$ be sections of $E|_A$ of class $\Lambda^r_\Oscr(A)$
which are $\Phi$-images of the standard basis sections
of $A\times\C^n$. Also, let $g_1,\ldots,g_m$ with $m=\rank E$
be basis sections of class $\Lambda^r_\Oscr(B)$ 
of the trivial bundle $E|_B\cong B\times\C^m$. On $C$, we have 
$e_i=\sum_{j=1}^m \gamma_{i,j}g_j$ ($i=1,\ldots,n$) 
with $\gamma_{i,j}\in\Lambda^r_\Oscr(C)$, and 
the $n\times m$ matrix $\Gamma'(x)=(\gamma_{i,j}(x))$ 
has rank $m$ at every point $x\in C$. We claim that there is
an $n\times n$ matrix function 
\[
	\Gamma=(\gamma_{i,j})_{i,j=1}^n =(\Gamma',\Gamma'') 
	\in \Lambda^r_\Oscr(C,GL_n)
\]
whose left hand side $n\times m$ submatrix equals $\Gamma'$. 
This is equivalent to saying that the subbundle $E'$ of  
$C\times\C^n$, spanned by the columns of $\Gamma'$, 
has a trivial complementary subbundle $E''\subset C\times\C^n$ of class 
$\Lambda^r_\Oscr(C)$. The existence of a complementary subbundle
$\wt E''$ of class $\Ascr(C)$ (continuous on $C$ and holomorphic 
on $\mathring C$) follows from \cite[Theorem 3]{Heunemann1986I}.
Since $C$ is contractible, $\wt E''$ is trivial as a bundle of class 
$\Ascr(C)$ by \cite[Theorem 2]{Heunemann1986I}. 
Let $\wt \Gamma''$ be an $n\times (n-m)$ matrix of class $\Ascr(C)$
spanned by the basis sections of $\wt E''$. 
Since $C$ is convex, we can approximate $\wt \Gamma''$ uniformly on $C$ 
by a matrix $\Gamma''$ of class $\Lambda^r_\Oscr(C)$. If the approximation 
is close enough then 
$\Gamma=(\Gamma',\Gamma'') \in \Lambda^r_\Oscr(C,GL_n)$. 
Set $g_{m+1}=0,\ldots,g_n=0$. It follows that 
\[
	(e_1,e_2,\ldots,e_n)^t = \Gamma (g_1,g_2,\ldots,g_n)^t,  
\]
where the superscript $t$ denotes the transpose. By 
Lemma \ref{lem:Cartan} we have $\Gamma=\alpha^{-1}\beta$
where $\alpha\in \Lambda^r_\Oscr(A,GL_n)$ is close to $I$
and $\beta \in \Lambda^r_\Oscr(B,GL_n)$. Then,
\[
	\alpha (e_1,e_2,\ldots,e_n)^t = \beta (g_1,g_2,\ldots,g_n)^t
\]
holds on $C$. The resulting collection on $n$ sections of $E\to D$ 
of class $\Lambda^r_\Oscr(D)$ determines a vector bundle epimorphism 
$\wt \Phi:D\times \C^n\to E$ of class $\Lambda^r_\Oscr(D)$ 
(see \eqref{eq:Phi}) which approximates $\Phi$ in $\Lambda^r_\Oscr(A)$.
\end{proof}

\begin{proof}[Proof of Theorem \ref{th:TheoremA}]
Pick a smooth strongly plurisubharmonic function $\rho$ on a 
neighbourhood $U$ of $\overline\Omega$
such that $\Omega=\{x\in U:\rho(x)<0\}$ and $d\rho_x\ne 0$
for every point $x\in b\Omega =\{\rho=0\}$. Choose $c<0$ such that
$\rho$ has no critical values on $[c,0]$ and 
set $A_0=\{\rho\le c\}$. By \cite[Lemma 5.10.3]{Forstneric2017E}, 
given an open cover $\Ucal=\{U_j\}$ of 
$\overline{\Omega\setminus A_0}=\{c\le \rho\le 0\}$ consisting of
holomorphic coordinate charts $U_j\subset X$,   
there are compact, smoothly bounded, strongly pseudoconvex domains 
$A_0\subset A_1 \subset \cdots \subset A_{k_0}=\overline\Omega$
for some $k_0\in\N$ such that for every $k=0,1,\ldots,k_0-1$ we have 
$A_{k+1}=A_k\cup B_k$, where $(A_k,B_k)$ is a special Cartan pair
(see Definition \ref{def:Cartan-pair}) and $B_k\subset U_j$
for some $j=j(k)$. We choose the sets $U_j$ small enough so that 
the restricted bundle $E|_{B_k}$ is trivial for $k=0,1,\ldots,k_0-1$.
Since $E$ is holomorphic over $\Omega$,
there is a holomorphic vector bundle epimorphism
$\Phi_0:A_0\times \C^n \to E|_{A_0}$ for some $n\in \N$. 
We inductively apply Lemma \ref{lem:ext-to-bump}
to find vector bundle epimorphisms
$\Phi_k:A_k\times \C^n \to E|_{A_k}$ of class $\Lambda^r_\Oscr(A_k)$
for $k=1,\ldots,k_0$ such that $\Phi_{k+1}$ approximates $\Phi_k$ 
on $A_k$ for every $k=0,1,\ldots,k_0-1$.
Then, $\Phi:=\Phi_{k_0}:\overline \Omega \times \C^n\to E$ 
satisfies the conclusion of the theorem.
\end{proof}

Next, we show that every complex vector subbundle 
of class $\Lambda^r_\Oscr(\overline\Omega)$ is complemented.

%
%
\begin{theorem}\label{th:complement}
Let $\pi:E\to \overline\Omega$ be a complex vector bundle of class
$\Lambda^r_\Oscr(\overline\Omega)$. For every 
complex vector subbundle $E'\subset E$ of class 
$\Lambda^r_\Oscr(\overline\Omega)$ there is a complementary 
to $E'$ complex vector subbundle $E''\subset E$ of class 
$\Lambda^r_\Oscr(\overline\Omega)$ such that $E\cong E'\oplus E''$
as complex vector subbundles of class $\Lambda^r_\Oscr(\overline\Omega)$.
\end{theorem}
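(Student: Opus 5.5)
The plan is the standard one: find a $\Lambda^r_\Oscr$ projection $P:E\to E$ with image $E'$, and set $E''=\ker P$. Equivalently, the inclusion $\iota:E'\hookrightarrow E$ should have a left inverse $\sigma:E\to E'$ of class $\Lambda^r_\Oscr(\overline\Omega)$ with $\sigma\circ\iota=\Id_{E'}$. Then $P=\iota\circ\sigma$ is idempotent, $E''=\ker\sigma$ is a subbundle of class $\Lambda^r_\Oscr$ (the kernel of an epimorphism of constant rank), and $E\cong E'\oplus E''$ via $v\mapsto(\sigma v,v-\iota\sigma v)$. To produce $\sigma$, I consider the bundle $\Hom(E,E')$, which is of class $\Lambda^r_\Oscr(\overline\Omega)$ since its transition maps are built algebraically from those of $E$ and $E'$. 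I also consider the restriction morphism
\[
	R:\Hom(E,E')\to \Hom(E',E'),\qquad R(\tau)=\tau\circ\iota .
\]
This is a $\Lambda^r_\Oscr$ vector bundle epimorphism, because fibrewise every linear map $E'_x\to E'_x$ extends to $E_x$. The goal becomes lifting the section $\Id_{E'}$ of $\Hom(E',E')$ through $R$ to a section of $\Hom(E,E')$ of class $\Lambda^r_\Oscr$.

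The lifting is done by Theorem \ref{th:TheoremA} together with the Cartan pair exhaustion, imitating the proof of Theorem \ref{th:TheoremA}. Let $F=\ker R$, a $\Lambda^r_\Oscr$ subbundle. Locally, on each bump $B_k$ where $E$ and $E'$ are trivial and $E'$ is given by a frame, a holomorphic-class lift exists; we can take it directly in local trivialisations where $E'\subset E$ is a coordinate subspace.

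The gluing over $C=A_k\cap B_k$ works as follows. On $A_k$ we have a lift $\sigma_A$, and on $B_k$ a lift $\sigma_B$. Their difference $\sigma_A-\sigma_B$ is a $\Lambda^r_\Oscr(C)$ section of $F|_C$. We trivialise $F|_{B_k}$, which is possible after shrinking the charts. Lemma \ref{lem:Cousin}, applied componentwise, then splits $\sigma_A-\sigma_B=a-b$ with $a\in\Gamma^r_\Oscr(A_k,F)$ small and $b\in\Gamma^r_\Oscr(B_k,F)$. Here $a$ must be a section of $F$ over $A_k$, not merely over $C$.

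This is the main obstacle: trivialising $F$ over $B_k$ does not give $a$ as a section of $F$ on $A_k$. I would first handle it with an additive Cousin problem for a $\Lambda^r_\Oscr$ bundle over a Cartan pair. Using Corollary \ref{cor:embedding}, embed $F$ in a trivial bundle $D\times\C^N$. Solve the scalar Cousin problem there with Lemma \ref{lem:Cousin}. Project back to $F$ using a projection that exists on $A_k$ by induction and on $B_k$ by triviality. This reduces the problem to the same type of problem for a smaller set, so the argument is inductive.

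If that becomes circular, the cleaner alternative is a cohomology vanishing statement: $H^1(\overline\Omega,\mathcal F)=0$ for the sheaf of $\Lambda^r_\Oscr$ sections of any such bundle. I would prove it with the bump method, where at each step the gluing is linear and Lemma \ref{lem:Cousin} solves it once $F|_{B_k}$ is trivial and $a$ is only needed on $C$. Indeed, $a$ need only be defined on $A_k$ as a section of the bundle, and $F|_{A_k}$ restricted to $C$ is trivial. I then take $\tilde\sigma=\sigma_A-a$ on $A_k$ and $\sigma_B-b$ on $B_k$. These agree on $C$ and define a lift on $A_{k+1}$. Starting from a holomorphic lift on $A_0$, which exists by classical Stein theory, and finishing at $A_{k_0}=\overline\Omega$, we obtain $\sigma$ and hence $E''$.
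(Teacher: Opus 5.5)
Your reduction is sound and differs from the paper's route: a left inverse $\sigma$ of $E'\hookrightarrow E$ is obtained by lifting $\Id_{E'}$ through $R$, and local lifts are corrected across the bumps by sections of $F=\ker R$. But everything hinges on the additive Cousin problem $c=a-b$ with $c\in\Gamma^r_\Oscr(C,F)$, $a\in\Gamma^r_\Oscr(A_k,F)$, $b\in\Gamma^r_\Oscr(B_k,F)$, for a bundle $F$ that is trivial over $B_k$ but not over $A_k$. Neither of your resolutions solves it.

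The ``cleaner alternative'' only restates the claim. The map $\tilde\sigma=\sigma_A-a$ must make sense on all of $A_k$. Lemma \ref{lem:Cousin}, applied in a trivialisation of $F|_{B_k}$, yields $a=\chi c-T(c\,\dibar\chi)$, a $\C^m$-valued function ($m=\rank F$). Its second term is in general nonzero on $A_k\setminus B_k$, where this trivialisation does not exist, so triviality of $F|_C$ is beside the point.

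The embedding route is circular. Suppose you split $c=\tilde a-\tilde b$ in a trivial bundle $A_{k+1}\times\C^N\supset F$ and project with $\pi_A$ on $A_k$ and $\pi_B$ on $B_k$. Then $\pi_A\tilde a-\pi_B\tilde b=c+(\pi_A-\pi_B)\tilde b$ on $C$, so you need $\pi_A=\pi_B$ on $C$. That is a $\Lambda^r_\Oscr$ projection of $A_{k+1}\times\C^N$ onto $F$, i.e.\ a complement of $F$ over $A_{k+1}$ itself. This is the theorem you are proving, on a set that is not smaller.

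The missing idea is to use the epimorphism rather than the embedding. Theorem \ref{th:TheoremA}, whose proof does not use Theorem \ref{th:complement}, gives a $\Lambda^r_\Oscr$ epimorphism $\Phi:\overline\Omega\times\C^N\to F$. Choose the cover $\Ucal$ so fine that on each $U_j$, in a local frame of $F$, a fixed $m\times m$ minor of $\Phi$ is invertible. This gives a $\Lambda^r_\Oscr$ right inverse $s$ of $\Phi$ over each bump $B_k$. Split $sc=\tilde a-\tilde b$ componentwise by Lemma \ref{lem:Cousin} and set $a=\Phi\tilde a$, $b=\Phi\tilde b$. Since $\Phi$ is defined on all of $\overline\Omega$, these are sections of $F$ over $A_k$ and $B_k$, and $a-b=\Phi s c=c$ on $C$. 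With this step your argument closes and avoids Heunemann's theorem entirely.

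The paper argues differently. In the trivial-bundle case it takes Heunemann's $\Ascr(\overline\Omega)$ complement and replaces it by a nearby holomorphic complement: it approximates the projection map holomorphically and takes the Riesz projection, complementarity being an open condition. For general $E$ it pushes a complement of $\Phi^{-1}(E')$ forward through a Theorem A epimorphism onto $E$.
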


\begin{proof}
We first consider the case when $E= \overline\Omega\times\C^n$
is a trivial bundle. By \cite[Theorem 3]{Heunemann1986II}
there is a vector subbundle $\wt E\subset E$ of class $\Ascr(\overline\Omega)$ 
complementary to $E'$. To complete the proof,
it suffices to approximate $\wt E$ sufficiently closely by a subbundle 
$E''\subset E=\overline\Omega\times\C^n$ of class 
$\Lambda^r_\Oscr(\overline\Omega)$. We follow the idea
in \cite[Proof of Lemma 2.2]{ForstnericLowOvrelid2001}.

Let $\wt L: \overline \Omega \to M_n=\Lin_\C(\C^n,\C^n)$ be the unique map
such that $\wt L(x):\C^n\to\C^n$ is the projection onto $\wt E_x$ 
with kernel $E'_x$ for every $x\in \overline \Omega$.
Clearly, $\wt L$ is of class $\Ascr(\overline\Omega)$, and 
the eigenvalues of $\wt L(x)$ are $0$ and $1$ for every 
$x\in \overline\Omega$. Let $C=\{z\in \C : |z-1|=1/2\}$. 
We can approximate $\wt L$ uniformly on $\overline\Omega$ 
by a holomorphic map $L:U \to M_n$ 
on a neighbourhood $U$ of $\overline\Omega$. 
Assuming that the approximation is close enough, 
$L(x)$ has no eigenvalues on the curve $C$ for $x\in\overline\Omega$,
we have $\C^n=V_{x,+} \oplus V_{x,-}$
where $V_{x,+}$ resp.\ $V_{x,-}$ are the $L(x)$-invariant 
subspaces of $\C^n$ spanned by the generalised
eigenvectors of $L(x)$ inside resp.\ outside of $C$, and  
$V_{x,+}$ is close to the subspace $\wt E_x=\wt L(x)(\C^n) \subset\C^n$. 
The map 
\[
	P(L(x))= \frac{1}{2\pi \imath} 
	\int_{\zeta\in C} \left(\zeta I-L(x) \right)^{-1}\, d\zeta \in M_n
\]
is the projection of $\C^n$ onto $V_{x,+}$ with kernel $V_{x,-}$ 
(see \cite{GohbergLancasterRodman1986}), it 
depends holomorphically on $x$ in a neighbourhood of $\overline\Omega$,
and it approximates $L(x)$ uniformly on $x\in \overline\Omega$. 
Assuming that the approximations are close enough, the subbundle 
$E''\subset E$ with fibres $E''_x=V_{x,+}=P(L(x))(\C^n)$, 
$x\in \overline\Omega$, is complementary to $E'$, 
and $E''$ is holomorphic on a neighbourhood of $\overline\Omega$. 

For a general vector bundle $E\to\overline\Omega$
of class $\Lambda^r_\Oscr(\overline\Omega)$, we apply 
Theorem \ref{th:TheoremA} to find a vector bundle
epimorphism $\Phi:\overline\Omega\times\C^N\to E$ of class
$\Lambda^r_\Oscr(\overline\Omega)$. The preimage $\Phi^{-1}(E')$
is a vector subbundle of $\overline\Omega\times\C^N$
class $\Lambda^r_\Oscr(\overline\Omega)$, 
so it has a complementary subbundle 
$\wt E''\subset \overline\Omega\times\C^N$
of class $\Lambda^r_\Oscr(\overline\Omega)$. 
Its image $E''=\Phi(\wt E'')$ is then a subbundle of $E$
of class $\Lambda^r_\Oscr(\overline\Omega)$ which is 
complementary to $E'$.
\end{proof}

%
%
\begin{remark}\label{rem:stability}
In the sequel, we shall be using the notion of a continuous family 
of holomorphic vector bundles on a smoothly bounded domain 
$\Omega\Subset X$ which are continuous or better on $\overline\Omega$;
see Leiterer \cite[Definition 2.14, p.\ 70]{Leiterer1990} (where
the parameter space is $[0,1]$) or the statement of his stability 
theorem \cite[Theorem 2.7]{Leiterer1990} for the general case.
This means that, locally in the parameter, the family is defined
by a continuous family of 1-cocycles on the same finite open covering
of $\overline\Omega$. The stability theorem says that, 
under suitable cohomological conditions on the endomorphism bundle 
$\mathrm{Ad}(E)$ of a vector bundle $E\to \overline\Omega$ of class 
$\Ascr(\overline\Omega)$ (see the statement of 
\cite[Theorem 2.7]{Leiterer1990}), all nearby vector bundles of the same
class are isomorphic to $E$ with a continuous dependence on the
parameter. This follows from an implicit function theorem
in Banach spaces given by \cite[Theorem 2.9]{Leiterer1990}.
The aforementioned cohomological conditions 
are satisfied if for every continuous $\dibar$-closed 
$E$-valued $(0,q)$-form $\phi$ on $\overline\Omega$,
$q>0$, the equation $\dibar \psi=\phi$ can be solved with a continuous
$\psi$ on $\overline\Omega$ (see \cite[proof of Theorem 2.12]{Leiterer1990}). 
This holds in particular if $\Omega$ is strongly pseudoconvex; 
see \cite[Lemma 2.13]{Leiterer1990}. By Theorems \ref{th:dibar}
and \ref{th:complement}, the same conclusion holds in 
H\"older--Zygmund spaces, so the analogue of 
\cite[Theorem 2.7]{Leiterer1990} also holds for vector bundles
of class $\Lambda^r_\Oscr(\overline\Omega)$. 
For later reference we state this result explicitly.

\begin{theorem}\label{th:stability}
Assume that $\overline \Omega$ is a compact, smoothly bounded, 
strongly pseudoconvex domain in a Stein manifold $X$,  
$P$ is a topological space, and $E_p\to \overline \Omega$ $(p\in P)$ 
is a continuous family of vector bundles of class 
$\Lambda^r_\Oscr(\overline\Omega)$, $r>0$. 
Then for each $p_0\in P$ there are a neighbourhood $P_0\subset P$
and a continuous family of vector bundle isomorphisms
$\phi_p:E_p \stackrel{\cong}{\longrightarrow} E_{p_0}$ $(p\in P_0)$ 
of class $\Lambda^r_\Oscr(\overline\Omega)$. 
\end{theorem}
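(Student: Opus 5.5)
We prove Theorem \ref{th:stability} by reducing it to an implicit function theorem in Banach spaces, following the scheme of Leiterer \cite[Theorems 2.7, 2.9]{Leiterer1990}. Throughout, $\Lambda^r_\Oscr$ replaces $\Ascr$, and Theorems \ref{th:dibar}, \ref{th:TheoremA} and \ref{th:complement} supply the needed analytic input.

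\emph{Set-up.} Fix $p_0\in P$. By the definition of a continuous family, after shrinking $P$ around $p_0$ and refining covers, all bundles $E_p$ are given by cocycles $A_{i,j}(p)\in\Lambda^r_\Oscr(U_{i,j},GL_n)$ on one finite cover $\{U_i\}$ of $\overline\Omega$, depending continuously on $p$. We seek isomorphisms $\phi_p:E_p\to E_{p_0}$. These amount to $0$-cochains $g_i(p)\in\Lambda^r_\Oscr(U_i,GL_n)$, depending continuously on $p$, with
\[
	g_i(p)A_{i,j}(p)g_j(p)^{-1}=A_{i,j}(p_0)\quad\text{on } U_{i,j}.
\]
Consider the smooth map of Banach manifolds
\[
	F:\textstyle\prod_i\Lambda^r_\Oscr(U_i,GL_n)\times\{\text{cocycles}\}\to\prod_{i,j}\Lambda^r_\Oscr(U_{i,j},GL_n),\quad F(g,A)_{i,j}=g_iA_{i,j}g_j^{-1}.
\]
Its smoothness follows from the Banach algebra property and Remark \ref{rem:composition}. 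Its partial derivative in $g$ at $(I,A(p_0))$ is the \v Cech coboundary map
\[
	\delta:C^0(\{U_i\},\mathrm{Ad}(E_{p_0}))\to Z^1(\{U_i\},\mathrm{Ad}(E_{p_0}))
\]
in $\Lambda^r_\Oscr$-sections of the endomorphism bundle. Strictly, the target should be the submanifold of cocycles, so one linearises along $Z^1$.

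\emph{Key step: a bounded right inverse of $\delta$.} We show that every $\Lambda^r_\Oscr$ $1$-cocycle $c=(c_{i,j})$ of $\mathrm{Ad}(E_{p_0})$ is a coboundary, with a bounded linear solution operator $S$. Choose a smooth partition of unity $\{\chi_k\}$ subordinate to $\{U_i\}$ and set $u_i=\sum_k\chi_k c_{i,k}$, which lies in $\Lambda^r$. Then $u_i-u_j=c_{i,j}$, and the forms $\dibar u_i=\sum_k c_{i,k}\dibar\chi_k$ glue to a global $\dibar$-closed $\mathrm{Ad}(E_{p_0})$-valued $(0,1)$-form $\omega$ of class $\Lambda^r$.

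To solve $\dibar\psi=\omega$ boundedly, we use Corollary \ref{cor:embedding} and Theorem \ref{th:complement}: they realise $\mathrm{Ad}(E_{p_0})$ as a $\Lambda^r_\Oscr$-direct summand of a trivial bundle $\overline\Omega\times\C^N$, with a $\Lambda^r_\Oscr$ projection $\pi$. We apply Kohn's operator $T$ of Theorem \ref{th:dibar} componentwise in the trivial bundle and then apply $\pi$. This works because $\pi$ is holomorphic, hence commutes with $\dibar$. Setting $v_i=u_i-\pi T\omega$ gives $\Lambda^r_\Oscr$ sections with $\delta v=c$, and $S:c\mapsto v$ is bounded linear.

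\emph{Conclusion via the implicit function theorem.} Since $\delta$ is surjective with bounded right inverse $S$, its kernel is complemented. The implicit function theorem with parameters (as in \cite[Theorem 2.9]{Leiterer1990}: solve $F(I+S\xi,A)=A(p_0)$ for $\xi$ near $0$ by a contraction argument) yields, for $A$ near $A(p_0)$, a solution $g=G(A)$ depending continuously on $A$, with $G(A(p_0))=I$.

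The main obstacle is this step. $F$ takes values in cocycles rather than in a linear space, so we must make sure the nonlinear correction stays within the range where $S$ applies. Leiterer handles this by working with the map restricted to the cocycle manifold and checking that its tangent space is exactly $Z^1$. We follow his argument verbatim, because it only uses a bounded right inverse of $\delta$ and the Banach-algebra structure. Composing $G$ with the continuous map $p\mapsto A(p)$ on a neighbourhood $P_0$ of $p_0$ gives the required continuous family $\phi_p$.
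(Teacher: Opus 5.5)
Your overall route is the paper's. Remark \ref{rem:stability} also proves the theorem by transplanting Leiterer's implicit-function-theorem argument, with the analytic input coming from Theorems \ref{th:dibar} and \ref{th:complement}. Your bounded right inverse $S$ of $\delta:C^0\to Z^1$ is correct: partition of unity, gluing the $\dibar u_i$ into a closed $\mathrm{Ad}(E_{p_0})$-valued $(0,1)$-form, Kohn's operator in a trivial bundle via Corollary \ref{cor:embedding}, then the holomorphic projection from Theorem \ref{th:complement}.

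The gap is the nonlinear step. You rightly call it the main obstacle, but you settle it by asserting that Leiterer's argument ``only uses a bounded right inverse of $\delta$''. That is not enough.

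Since $\delta$ maps onto $Z^1$ and not onto $C^1$, the equation $F(I+S\xi,A)=A(p_0)$ is overdetermined. It can be solved only because its left side is automatically a nonabelian cocycle. Set $\beta_{i,j}=g_iA_{i,j}g_j^{-1}A_{i,j}(p_0)^{-1}-I$. The nonabelian cocycle condition gives the linear relation only up to a quadratic term, $\delta^{(1)}\beta=Q(\beta)$, where $\delta^{(1)}:C^1\to C^2$ is the next coboundary.

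There are three natural ways to proceed:
\begin{enumerate}[\rm (1)]
\item a Newton-type step $g\mapsto (I-\tilde S\beta)g$, with $\tilde S$ a bounded extension of $S$ to $C^1$;
\item projecting the equation onto $Z^1$ and showing that the projected equation implies the full one;
\item proving that the cocycles near $A(p_0)$ form a submanifold with tangent space $Z^1$.
\end{enumerate}
Each of these needs control of $\delta^{(1)}$, for instance an estimate $\mathrm{dist}(\beta,Z^1)\le C\|\delta^{(1)}\beta\|$, which amounts to closed range of $\delta^{(1)}$. This is a degree-two property, and it does not follow from the existence of $S$. Already for linear complexes, $\delta^{(0)}$ can map onto $\ker\delta^{(1)}$ with a bounded right inverse while $\delta^{(1)}$ has non-closed range.

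Correspondingly, the paper's remark verifies Leiterer's hypotheses for $\dibar$-closed $(0,q)$-forms of every degree $q>0$, not only $q=1$. It uses Theorem \ref{th:dibar}, which holds for all $q\ge 1$.

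To complete your proof, you must supply this degree-two input and feed it into the nonlinear argument. The input is bounded solvability of $\dibar$ on closed $\mathrm{Ad}(E_{p_0})$-valued $(0,2)$-forms. Theorem \ref{th:dibar} with $q=2$, transferred to $\mathrm{Ad}(E_{p_0})$ by your embedding--projection trick, provides it. Appealing to Leiterer ``verbatim'' with only $S$ in hand does not cover this step.
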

\end{remark}

%
%
By using this result, we obtain the following parametric version of Theorem \ref{th:TheoremA}. 

\begin{theorem}
\label{th:TheoremApar}
Assume that $\overline \Omega$ is a compact, smoothly bounded, 
strongly pseudoconvex domain in a Stein manifold $X$,  
$P$ is a compact Hausdorff space, 
and $E_p\to \overline \Omega$ $(p\in P)$ is a continuous
family of vector bundles of class 
$\Lambda^r_\Oscr(\overline\Omega)$, $r>0$. 
There exist an integer $N\in \N$ and a 
continuous family of vector bundle epimorphisms
$\Phi_p: \overline \Omega\times \C^N \to E_p$, $p\in P$,
of class $\Lambda^r_\Oscr(\overline \Omega)$.
\end{theorem}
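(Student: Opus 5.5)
The plan is to combine the local triviality given by Theorem \ref{th:stability} with the nonparametric Theorem \ref{th:TheoremA}, and then use compactness of $P$ and a partition of unity on $P$ to glue the resulting epimorphisms.

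\emph{Step 1: local epimorphisms.} Fix $p_0\in P$. Theorem \ref{th:TheoremA} gives a vector bundle epimorphism $\Psi_{p_0}:\overline\Omega\times\C^{N_0}\to E_{p_0}$ of class $\Lambda^r_\Oscr(\overline\Omega)$. Theorem \ref{th:stability} provides a neighbourhood $P_0\ni p_0$ and a continuous family of isomorphisms $\phi_p:E_p\to E_{p_0}$ $(p\in P_0)$ of class $\Lambda^r_\Oscr(\overline\Omega)$. Set
\[
	\Psi_p=\phi_p^{-1}\circ\Psi_{p_0}:\overline\Omega\times\C^{N_0}\to E_p,\quad p\in P_0 .
\]
This is a continuous family of epimorphisms of class $\Lambda^r_\Oscr(\overline\Omega)$ over $P_0$. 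Continuity of $p\mapsto\phi_p^{-1}$ follows because inversion is continuous in the Banach algebra of $\Lambda^r_\Oscr$ bundle endomorphisms.

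\emph{Step 2: compactness.} Since $P$ is compact Hausdorff, finitely many such neighbourhoods $P_1,\ldots,P_k$ cover $P$. Let $\Psi^{(i)}_p:\overline\Omega\times\C^{N_i}\to E_p$ $(p\in P_i)$ be the corresponding families. Choose compact sets $K_i\subset P_i$ whose interiors still cover $P$. Choose continuous functions $\lambda_i:P\to[0,1]$ with $\supp\lambda_i\subset P_i$ and $\lambda_i=1$ on $K_i$. Define $\Lambda^{(i)}_p=\lambda_i(p)\Psi^{(i)}_p$ for $p\in P_i$ and $\Lambda^{(i)}_p=0$ otherwise. This is continuous on all of $P$ and has class $\Lambda^r_\Oscr(\overline\Omega)$, because the factor $\lambda_i(p)$ is a scalar constant on $\overline\Omega$ and so preserves holomorphy on $\Omega$.

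\emph{Step 3: gluing.} Put $N=N_1+\cdots+N_k$ and
\[
	\Phi_p(x,z^{(1)},\ldots,z^{(k)})=\sum_{i=1}^k\Lambda^{(i)}_p(x,z^{(i)}),\qquad z^{(i)}\in\C^{N_i}.
\]
This is a continuous family of bundle maps $\overline\Omega\times\C^N\to E_p$ of class $\Lambda^r_\Oscr(\overline\Omega)$. It is surjective on every fibre: given $p$, pick $i$ with $p\in K_i$. Then $\lambda_i(p)=1$, and the restriction of $\Phi_p$ to the $i$-th block $\C^{N_i}$ equals the epimorphism $\Psi^{(i)}_p$.

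\emph{Main obstacle.} The only delicate point is the meaning of a continuous family of bundles and of morphisms in the sense of Remark \ref{rem:stability}. Cocycles are given locally in $p$ on a fixed covering, so continuity of $\Lambda^{(i)}$ across $\partial P_i$ must be checked in local trivialisations of the family. It holds because $\lambda_i$ vanishes near the boundary of $P_i$. Everything else is taken care of by Theorem \ref{th:stability}, so no new analytic input is needed.
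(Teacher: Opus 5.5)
Your proposal is correct and follows essentially the same route as the paper. You build local epimorphisms $\phi_p^{-1}\circ\Psi_{p_0}$ from Theorem \ref{th:stability} and Theorem \ref{th:TheoremA}, then take a finite cover of $P$ with cutoff functions equal to $1$ on a shrunken cover, and form the direct sum $\Phi_p=\bigoplus_i \Phi^i_p$ with $N=\sum_i N_i$; your scaling $\lambda_i(p)\Psi^{(i)}_p$ coincides with the paper's $\Psi^i_p(x,\chi_i(p)z)$ by fibrewise linearity.
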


\begin{proof}
By Theorem \ref{th:stability}, every point 
$p_0\in P$ has a neighbourhood $P_0\subset P$ and a continuous 
family of vector bundle isomorphisms $\phi_p:E_p\to E_{p_0}$ 
$(p\in P_0)$ of class $\Lambda^r_\Oscr(\overline\Omega)$.
A vector bundle epimorphism
$\Psi_{p_0}: \overline \Omega\times \C^{n_0} \to E_{p_0}$,
given by Theorem \ref{th:TheoremA}, extends to a continuous
family of such epimorphisms $\Psi_p=\phi_p^{-1}\circ \Psi_{p_0}:
\overline \Omega\times \C^{n_0} \to E_{p}$ for $p\in P_0$.
Since $P$ is compact, we obtain finitely many pairs of open 
subsets $P'_i \Subset P_i\subset P$ and 
continuous families of vector bundle 
epimorphism $\Psi^i_p:\overline \Omega\times \C^{n_i}\to E_p$ 
($p\in P_i$,  $i=1,\ldots,m$) such that $\bigcup_{i=1}^m P'_i=P$ .
For every $i=1,\ldots,m$ let $\chi_i:P\to [0,1]$ be a continuous
function such that $\chi_i=1$ on $P'_i$ and $\supp \chi_i\subset P_i$.
Let $\Phi^i_p: \overline \Omega\times \C^{n_i}\to E_p$ be 
defined by $\Phi^i_p(x,z)= \Psi^i_p(x,\chi_i(p)z)$. 
Take $N=\sum_{i=1}^m n_i$. The family 
\[
	\Phi_p=\bigoplus_{i=1}^m \Phi^i_p: 
	\overline \Omega\times \C^{N}\to E_p,
	\quad p\in P,
\]
clearly satisfies the theorem.
\end{proof}

%
%
We also have the following parametric version of 
Theorem \ref{th:complement}.

\begin{theorem}\label{th:complement-par}
Assume that $\Omega$ is as in Theorem \ref{th:complement}, 
$P$ is a compact Hausdorff space, and 
$\pi_p:E_p\to \overline\Omega$ is a continuous family 
of vector bundles of class $\Lambda^r_\Oscr(\overline\Omega)$.
(See Remark \ref{rem:stability}.) 
Given a continuous family $E'_p\subset E_p$ $(p\in P)$ 
of  vector subbundles of class $\Lambda^r_\Oscr(\overline\Omega)$, 
there is a continuous family $E''_p\subset E_p$ $(p\in P)$ 
of complementary vector subbundles of class 
$\Lambda^r_\Oscr(\overline\Omega)$ such that 
\begin{equation}\label{eq:directsum} 
	E_p\cong E'_p\oplus E''_p,\quad p\in P.
\end{equation}
If in addition $Q$ is a closed subspace of $P$ which is a 
strong neighbourhood deformation retract and 
$E''_p\subset E_p$ $(p\in Q)$ is a continuous family 
of complementary to $E'_p$  vector subbundles of class 
$\Lambda^r_\Oscr(\overline\Omega)$, then the family $E''_p$
can be extended to all values $p\in P$ so that 
\eqref{eq:directsum} holds.
\end{theorem}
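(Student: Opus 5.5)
Complements correspond to projections: a subbundle $E''_p\subset E_p$ is complementary to $E'_p$ exactly when it is the image of a bundle endomorphism $L_p\in \Gamma^r_\Oscr(\overline\Omega,\mathrm{End}(E_p))$ with $L_p^2=L_p$ and $\ker L_p=E'_p$. So I will build a continuous family of such projections.

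\emph{Step 1: local construction.} Fix $p_0\in P$. By Theorem \ref{th:stability} there are a neighbourhood $P_0$ of $p_0$ and continuous isomorphisms $\phi_p:E_p\to E_{p_0}$ of class $\Lambda^r_\Oscr(\overline\Omega)$. Transporting, we get a continuous family of subbundles $\phi_p(E'_p)\subset E_{p_0}$. Theorem \ref{th:complement} gives a complement $E''_{p_0}$ of $E'_{p_0}$. Since complementarity is an open condition (uniform on the compact $\overline\Omega$), after shrinking $P_0$ the subbundle $\phi_p^{-1}(E''_{p_0})$ is complementary to $E'_p$ for all $p\in P_0$. Let $L^0_p$ be the projection of $E_p$ onto it with kernel $E'_p$. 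This family depends continuously on $p$ in $\Lambda^r$: in local frames it is obtained from the frame matrices by inverting a matrix, and inversion is smooth on $\Lambda^r_\Oscr(\,\cdot\,,GL_n)$ by Remark \ref{rem:composition}.

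\emph{Step 2: patching.} Cover $P$ by finitely many sets $P_i$ carrying projection families $L^i_p$ with kernel $E'_p$, and take a partition of unity $\{\chi_i\}$ on $P$ subordinate to it. Set $M_p=\sum_i\chi_i(p)L^i_p$. Each $L^i_p$ is the identity modulo $E'_p$ and vanishes on $E'_p$; hence so is any convex combination. Thus $M_p$ kills $E'_p$ and induces the identity on $E_p/E'_p$. Then $E''_p:=M_p(E_p)$ is a subbundle of class $\Lambda^r_\Oscr$ of rank $\operatorname{rank}E_p-\operatorname{rank}E'_p$ that maps isomorphically onto $E_p/E'_p$, so it is complementary to $E'_p$. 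In fact $M_p^2=M_p$, because $M_p(v)-v\in E'_p$. So no eigenprojection trick is needed; affine combinations of projections with a common kernel are again such projections.

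\emph{Step 3: the relative statement.} Let $L^Q_p$ ($p\in Q$) be the projections onto the given complements. Using the retraction of a neighbourhood $Q_0$ of $Q$ onto $Q$, precomposed as in the proof of Theorem \ref{th:approximation-par}, I extend $L^Q$ to a continuous family of projections with kernel $E'_p$ on a neighbourhood $Q_1$ of $Q$. This uses Step 1 again: transport via $\phi_p$ and shrink, so that the projection is defined relative to $E_p$ rather than $E_{\psi(p)}$. Then I include this family as one member of the patching in Step 2, with a cutoff $\chi_0=1$ near $Q$ and $\operatorname{supp}\chi_0\subset Q_1$. The other $\chi_i$ vanish near $Q$, so $M_p=L^Q_p$ on $Q$.

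The \textbf{main obstacle} is this last extension. Elements near $Q$ live in different bundles $E_p$, so the retraction cannot be applied naively. I would handle it by covering $Q$ with finitely many sets on which the isomorphisms $\phi_p$ of Theorem \ref{th:stability} exist, transporting locally, and combining with the same affine patching. The key fact is again that affine combinations preserve the class of projections with kernel $E'_p$.
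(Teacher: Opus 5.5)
Your proposal is correct and follows essentially the paper's route: local complements from Theorem \ref{th:complement} are transported by the isomorphisms of Theorem \ref{th:stability}, complementarity is open on the compact $\overline\Omega$, and the local families are patched by a partition of unity on $P$ because the admissible objects form a convex set. You work with projections $L_p$ with kernel $E'_p$ where the paper uses splittings $\sigma_p$ of $q_p:E_p\to E_p/E'_p$, which is the same thing via $L_p=\sigma_p\circ q_p$; your Step 3 (local transport along the retraction, then affine patching that is exact on $Q$) also spells out the relative statement, which the paper's proof leaves implicit.
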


\begin{proof}
Assume first that the family of vector bundles 
$E_p=E\to \overline \Omega$ is independent of $p\in P$.
Consider the short exact sequence of vector bundle homomorphisms 
\begin{equation}\label{eq:SES}	
	0\lra E'_p \lra E \stackrel{\phi_p}{\lra} E/E'_p \lra 0,
\end{equation}
where $\phi_p$ is the natural quotient projection for every $p\in P$. 
Theorem \ref{th:complement} gives for every $p\in P$ a 
subbundle $E''_p\subset E$ of class $\Lambda^r_\Oscr(\overline\Omega)$
complementary to $E'_p$. Such a subbundle is the image of a 
unique vector bundle monomorphism 
$\sigma_p: E/E'_p\to E$ of the same class 
satisfying $\phi_p\circ \sigma_p=\Id$ on $E/E'_p$, and vice versa.
Such $\sigma_p$ is called a splitting of the sequence \eqref{eq:SES}.
Since $\overline \Omega$ is compact and the subbundles
$E'_p$ of $E$ vary continuously with $p$, a complementary 
to $E'_{p_0}$ subbundle $E''_{p_0}\subset E$ 
is also complementary to $E'_p$ for all $p\in P$  
near $p_0$, so it gives a continuous family of
splittings $\sigma_p:E/E'_p\to E$. Furthermore, a convex linear 
combination of splittings of $\phi_p$ in \eqref{eq:SES} 
is again a splitting. Hence, the proof follows by using a continuous
partition of unity on the parameter space $P$.

The case of a continuously varying family $E_p\to \overline \Omega$, 
$p\in P$, can be reduced to the special case as follows. 
By Theorem \ref{th:stability}, every point $p_0\in P$ has a neighbourhood
$P_0\subset P$ and a continuous family of vector bundle isomorphisms 
$\phi_p:E_p\to E_{p_0}$ $(p\in P_0)$ of class 
$\Lambda^r_\Oscr(\overline\Omega)$.
Hence, the family $E_p$ is locally constant, and hence 
our proof applies locally on $P$. It remains to patch 
the locally defined continuous families of 
splittings by a continuous partition of unity on $P$. 
\end{proof}

%
%
%
%
\section{A gluing lemma for sprays of class 
$\Lambda^r_\Oscr$}\label{sec:gluing}

Let $X$ be a complex manifold. Given a compact 
smoothly bounded subset $K$ of $X$
and an open convex domain $0\in W\subset \C^N$ (the 
precise choice of $W$ is not important; for convenience
we may take a ball or a polydisc), we shall consider maps
$\gamma : K\times W\to K\times \C^N$ of the form
\begin{equation}\label{eq:gamma}
     \gamma(x,w) = \bigl(x,\psi(x,w)\bigl),\quad x\in K, \ w\in W
\end{equation}
and of class $\Lambda^r_\Oscr(K\times W)$. The domain $W$ 
will have the role of a parameter space and will be allowed  
to shrink during the construction. 
Let $\Id(x,w)=(x,w)$ denote the identity map on $X\times \C^N$. 

%
%
%
The following splitting lemma is an analogue of 
\cite[Proposition 5.8.1, p.\ 235]{Forstneric2017E}, 
which pertains to function spaces $\Ascr^r(K)$
with $r\in \Z_+$. It was first proved in 
\cite[Theorem 3.2]{DrinovecForstneric2008FM} 
and \cite[Lemma 3.2]{Forstneric2007AJM}.

\begin{lemma}\label{lem:splitting}
Let $(A,B)$ be a Cartan pair in a Stein manifold $X$ 
(see Def.\ \ref{def:Cartan-pair}) and set $C=A\cap B$, $D=A\cup B$. 
Given a convex domain 
$0\in W\subset\C^N$ and a number $\epsilon \in (0,1)$, 
there is a number $\delta>0$ satisfying the following.
For every map $\gamma : C\times W \to C\times \C^N$
of the form \eqref{eq:gamma} and of class 
$\Lambda^r_\Oscr(C\times W)$ satisfying
$\|\gamma-\Id\|_{\Lambda^r(C\times W)}<\delta$ 
there exist maps
\begin{equation}\label{eq:alphabeta}
 	 \alpha_\gamma : A \times \epsilon W \to A\times \C^N, \qquad
 	 \beta_\gamma :  B \times \epsilon W \to B \times \C^N
\end{equation}
of the form \eqref{eq:gamma} and of class
$\Lambda^r_\Oscr(A\times \epsilon W)$ and 
$\Lambda^r_\Oscr(B\times \epsilon W)$, respectively, 
depending smoothly on $\gamma$,
such that $\alpha_{\Id}=\Id$, $\beta_{\Id}=\Id$, and
\begin{equation}\label{eq:splitspray}
   \gamma\circ\alpha_\gamma = \beta_\gamma
   \quad \text{holds on $C\times \epsilon W$.}
\end{equation}
If $\gamma$ agrees with $\Id$ to order $k\in\N$ along $w=0$
then so do $\alpha_\gamma$ and~$\beta_\gamma$.
The analogous result holds with a continuous dependence
of maps on a parameter $p\in P$ in a compact Hausdorff space.
\end{lemma}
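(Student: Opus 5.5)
We follow the scheme of \cite[Proposition 5.8.1]{Forstneric2017E} and \cite[Theorem 3.2]{DrinovecForstneric2008FM}: an implicit function theorem in Banach spaces built on the linear Cousin splitting of Lemma \ref{lem:Cousin}.

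First, we parametrise the maps. We write $\gamma=\Id+(0,c)$, $\alpha=\Id+(0,a)$, $\beta=\Id+(0,b)$, where $c\in\Lambda^r_\Oscr(C\times W,\C^N)$, $a\in\Lambda^r_\Oscr(A\times W',\C^N)$ and $b\in\Lambda^r_\Oscr(B\times W',\C^N)$, on a fixed intermediate domain $\epsilon W\Subset W'\Subset W$. Equation \eqref{eq:splitspray} then reads
\[
	b(x,w)=a(x,w)+c\bigl(x,w+a(x,w)\bigr),\qquad x\in C .
\]
For this we need Lemma \ref{lem:Cousin} with the parameter $w$. The operators $\Acal c=c\chi-T(c\,\dibar\chi)$ and $\Bcal c=c(\chi-1)-T(c\,\dibar\chi)$ are applied to $c(\cdot,w)$ for each fixed $w$. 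The cutoff $\chi$ and the operator $T$ act only in $x$, so holomorphy in $w$ is preserved, because $T$ commutes with $\di/\di\bar w$. We still have to check that $\Lambda^r$ regularity in the joint variables $(x,w)$ is preserved. On $\epsilon W$ or $W'$ this follows from Cauchy estimates in $w$ on the larger domain $W$, combined with the boundedness of $T$ on $\Lambda^r(D)$ from Theorem \ref{th:dibar}. Derivatives in $w$ of all orders commute with $T$ and are controlled by sup norms on $W$, and mixed H\"older--Zygmund differences can be estimated by splitting them into an $x$-difference and a $w$-difference.

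Next, we define $\Phi(c,a)=\Bcal\bigl(a+c\circ(\Id+(0,a))\bigr)$-type nonlinear map; more cleanly, we seek $a=\Acal\,G$ and $b=\Bcal\,G$ for a single unknown $G\in\Lambda^r_\Oscr(C\times W',\C^N)$. Since $\Acal G-\Bcal G=G$, the splitting equation becomes the fixed-point problem
\[
	G(x,w)=-\,c\bigl(x,w+\Acal G(x,w)\bigr)\quad\text{on } C\times W' .
\]
We set $\Theta(c,G)=G+c\circ(\Id+(0,\Acal G))$. The composition $c\circ(\Id+(0,\Acal G))$ is well defined when $\|\Acal G\|$ is small, since $W'\Subset W$. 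It is smooth in $(c,G)$ by Lemma \ref{lem:composition} and Remark \ref{rem:composition}, using that $c$ is holomorphic in $w$ and hence smooth in $w$ on $W'$ with bounds from $W$. We have $\Theta(0,0)=0$ and $\di_G\Theta(0,0)=\Id$, so the implicit function theorem gives a smooth solution $G=G(c)$ with $G(0)=0$ for $\|c\|<\delta$. Then $\alpha_\gamma$ and $\beta_\gamma$, restricted to $\epsilon W$, depend smoothly on $\gamma$ and satisfy $\alpha_{\Id}=\beta_{\Id}=\Id$. The loss of domain from $W$ to $\epsilon W$ absorbs the $w$-shift by $\Acal G$ and the Cauchy estimates.

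For the order-$k$ statement, we work throughout in the closed subspaces of maps vanishing to order $k$ at $w=0$. These subspaces are preserved by $\Acal$ and $\Bcal$, which act fibrewise in $w$, and by the composition $\Theta$. Hence the fixed point lies in them.

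For the parametric version, the implicit function theorem yields a smooth, hence continuous, dependence on $c$, so composing with a continuous $p\mapsto\gamma_p$ gives the claim.

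The main obstacle is the functional-analytic setup: showing that $\Theta$ is a $\Cscr^1$ map between the chosen Banach spaces $\Lambda^r_\Oscr(C\times W')$ despite the shrinking in $w$. If composition fails to be $\Cscr^1$ on one fixed space because of derivative loss in $w$, we would instead use the space of maps holomorphic and bounded in $w$ on $W$ with values in $\Lambda^r(C)$ in $x$, that is, $\Lambda^r$ regularity in $x$ only. There Cauchy estimates make $w$-composition harmless, and afterwards we recover joint $\Lambda^r$ regularity on $\epsilon W$.
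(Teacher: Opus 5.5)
Your proposal is essentially the paper's proof. With $\psi(x,w)=w+c(x,w)$, your unknown $G$ (with $a=\Acal G$, $b=\Bcal G$) and the equation $\Theta(c,G)=G+c\bigl(x,w+\Acal G(x,w)\bigr)=0$ are exactly the paper's equation $\Phi(\psi,\cdot)=0$. There the paper's unknown $c$ plays the role of your $G$, and $a_\psi=w+\Acal G$, $b_\psi=w+\Bcal G$. The space in your last paragraph (maps bounded and holomorphic in $w$ with values in $\Lambda^r(C)$, with joint regularity recovered after shrinking) is the paper's $C_\epsilon$.

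That fallback is not optional, however. Your Cauchy-estimate argument only shows that $\Acal$ preserves joint $\Lambda^r$ regularity after the $w$-domain has been shrunk. So $\Theta(c,\cdot)$ is not shown to map a fixed space $\Lambda^r_\Oscr(C\times W')$ into itself, and the implicit function theorem cannot be applied there.

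Likewise, Lemma \ref{lem:composition} does not give smoothness of $G\mapsto c\circ(\Id+(0,\Acal G))$. That lemma needs a $\Cscr^{r+1}$ outer function, while $c$ is only $\Lambda^r$ in $x$. The right tool is the one the paper uses: the expansion $c(x,w+u)=\sum_\beta \frac{1}{\beta!}\,\di_w^\beta c(x,w)\,u^\beta$, Cauchy estimates for $\di_w^\beta c(\cdot,w)$ in $\Lambda^r(C)$ on the larger domain $W$, and the Banach algebra property of $\Lambda^r(C)$.

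Where you genuinely differ is the order-$k$ claim. The paper compares Taylor coefficients in $w$ by hand, for scalar $w$. You instead note that the closed subspaces of maps vanishing to order $k$ at $w=0$ are invariant under $\Acal$, $\Bcal$ and $\Theta$. You then apply the implicit function theorem in those subspaces and use local uniqueness of its solution in the full space. This route is cleaner and treats vector-valued $w$ and all $k$ uniformly.
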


\begin{proof}
We follow \cite[proof of Proposition 5.8.1]{Forstneric2017E}, 
taking into account \cite[Remark 5.8.3 (B), p.\ 238]{Forstneric2017E}
and Theorem \ref{th:dibar} in this paper. In particular, 
the use of \cite[Lemma 5.8.2, p.\ 236]{Forstneric2017E}
is replaced by Lemma \ref{lem:Cousin}. We recall the proof 
and adjust it to the H\"older--Zygmund classes.

Given a number $\epsilon\in (0,1]$, consider the Banach space
$C_{\epsilon}$ consisting of all continuous maps
\[
	\psi=(\psi_1,\ldots,\psi_N):C\times \epsilon W \to \C^N
\] 
which are holomorphic in $\mathring C\times \epsilon W$ and satisfy
\[
	\psi(\cdotp,w)\in \Lambda^r(C,\C^N)\ \ 
	\text{for all $w\in \epsilon W$},
	\qquad
	\|\psi\|_{\epsilon} = \sup_{w\in \epsilon W} 
	\sum_{i=1}^N \|\psi_i(\cdotp,w)\|_{\Lambda^r (C)} < \infty.
\]
Similarly, $A_{\epsilon}$ and $B_{\epsilon}$ denote Banach spaces
of the same kind associated to the sets $A$ and $B$, respectively. 
We may identify $\psi\in C_\epsilon$ with the bounded 
holomorphic map
\[
	\epsilon W\ni w \mapsto 
	\psi(\cdotp,w)\in  \Lambda^r_\Oscr (C,\C^N).
\]
Note that $\Lambda^r_\Oscr(C\times \epsilon W) \subset C_\epsilon$
and the inclusion is continuous. Furthermore, if $0<\epsilon'<\epsilon$ 
then the restriction map 
\begin{equation}\label{eq:restriction}
	C_\epsilon\ni \psi\mapsto \psi|_{C\times \epsilon'W} 
	\in \Lambda^r_\Oscr(C\times \epsilon' W) 
\end{equation}
is continuous; see e.g. \cite[Theorem, p.\ 63]{BesovIlinNikolskii1979}.
Let 
\[
	\Acal:\Lambda^r_\Oscr(C)\to \Lambda^r_\Oscr(A),
	\qquad  
	\Bcal:\Lambda^r_\Oscr(C)\to \Lambda^r_\Oscr(B)
\]
be the bounded linear operators in Lemma \ref{lem:Cousin}, satisfying
$\Acal c - \Bcal c=c$ for all $c\in \Lambda^r_\Oscr(C)$ 
(see \eqref{eq:AplusB}). We extend them to maps $c\in C_\epsilon$ by 
$(\Acal c)(\cdotp,w) = \Acal (c(\cdotp,w))$ for each $w\in\epsilon W$,
and likewise for $\Bcal$. This gives bounded linear operators
\[
	\Acal:C_\epsilon \to A_\epsilon,\qquad \Bcal:C_\epsilon \to B_\epsilon,
\]
such that
\[
	\Acal c - \Bcal c=c \quad\text{for every $c\in C_\epsilon$.}
\]
We are given a map 
\begin{equation}\label{eq:gamma1}
	\gamma(x,w)=(x,\psi(x,w)),\quad x\in C,\ w\in W,
\end{equation}
of the form \eqref{eq:gamma}, with $\psi\in C_{1}$ close to the map
\[
	\psi_0(x,w)=w, 
\]
and a number $0<\epsilon <1$. Given $c\in C_{\epsilon}$ near $0$, we define 
\begin{equation}\label{eq:Phi}
        \Phi(\psi,c)(x,w) =
        \psi\bigl(x,w+(\Acal c)(x,w)\bigr) - \bigl(w + (\Bcal c)(x,w)\bigr),
        \quad x\in C,\ w\in \epsilon W.
\end{equation}
Since $\Acal:C_\epsilon \to A_\epsilon$ is a bounded linear operator,
$\Phi(\psi,c)$ is well defined for any $c\in C_{\epsilon}$ 
in a neighbourhood of $0$. 
We claim that $(\psi,c)\mapsto \Phi(\psi,c)$ is a smooth map from 
an open neighborhood of $(\psi_0,0)$ in the Banach space 
$C_{1} \times C_{\epsilon}$ to $C_{\epsilon}$. 
To see this, note that $\Phi$ is affine linear in $\psi$, 
and it is continuous in $c\in C_{\epsilon}$ as a map 
to $C_{\epsilon}$. Indeed, the function $\psi$ is of the form
\[
	\psi(x,w)= \sum_{j \in \Z_+^N} \psi_j(x) w^j,\quad 
	x\in C,\ w\in W\subset\C^N, 
\]
with $\psi_j\in \Lambda^r_\Oscr(C)$ for $j\in \Z_+^N$.
Choose a number $\epsilon_1\in (\epsilon,1)$.
For $c\in C_{\epsilon}$ sufficiently near $0$, the map
\[
	C\times \epsilon W \ni (x,w) \mapsto w+(\Acal c)(x,w) \in \C^N 
\] 
belongs to $C_\epsilon$ and has range compactly 
contained in $C\times \epsilon_1 W$. For such $c$, the composition
\[
	(x,w) \mapsto \psi(x,w+(\Acal c)(x,w))
\]
lies in $C_{\epsilon}$ (since it is a convergent power series in $w$ 
with coefficients in $\Lambda^r(C)$), and it depends continuously on $c$ 
as an element of $C_{\epsilon}$. Furthermore, the partial differential 
of $\Phi(\psi,c)$ with respect to the second variable $c$ 
at $c=c_0\in C_{\epsilon}$ equals
\[
     \di_c\, \Phi(\psi,c_0) c(x,w) =
     \di_w\, \psi\bigl(x, w+(\Acal c_0)(x,w)\bigr) 
     \cdotp (\Acal c)(x,w) - (\Bcal c)(x,w).
\]
This map is again affine linear in $\psi$ and continuous in all variables.
A similar argument applies to higher order differentials of $\Phi$.
Note that 
\[
    \Phi(\psi_0,c)= \Acal(c)-\Bcal(c)=c,\quad c\in C_\epsilon,
\]
so $\di_c\, \Phi(\psi_0,0)$ is the identity map on $C_\epsilon$.
The implicit function theorem gives a smooth map 
\[
	\psi \longmapsto \Ccal(\psi) \in C_{\epsilon}, 
\]
defined in a neighborhood of $\psi_0$ in $C_1$, such that 
\[
	\Phi(\psi,\Ccal(\psi))=0 \quad {\rm and} \quad 
	\Ccal(\psi_0)=0.
\]
The maps $a_\psi$ and $b_\psi$ defined by
\begin{equation}\label{eq:apsi}
        a_\psi(x,w) = w + \Acal \circ \Ccal(\psi)(x,w), \qquad
        b_\psi(x,w) = w + \Bcal\circ \Ccal(\psi)(x,w)
\end{equation}
then satisfy $a_\psi\in A_{\epsilon}$,  $a_{\psi_0}=\psi_0$, 
$b_\psi\in B_\epsilon$, $b_{\psi_0}=\psi_0$, and
\[
        \psi\bigl(x,a_\psi(x,w)\bigr) = b_\psi(x,w),
        \quad (x,w)\in C \times \epsilon W.
\]
Furthermore, after decreasing $\epsilon>0$ slightly, the comment 
after \eqref{eq:restriction} shows that 
\[
	a_\psi\in \Lambda^r_\Oscr(A\times \epsilon W),
	\qquad 
	b_\psi\in \Lambda^r_\Oscr(B\times \epsilon W).
\]
The associated maps
\[
	\alpha_\gamma (x,w) = \bigl(x,a_\psi(x,w)\bigr),  \quad
	\beta_\gamma (x,w)  = \bigl(x,b_\psi(x,w)\bigr)
\]
depend smoothly on the map $\gamma$ \eqref{eq:gamma1} and satisfy 
condition \eqref{eq:splitspray}.

It remains to prove the last claim in the lemma.
(This proof was omitted in \cite[Proposition 5.8.1]{Forstneric2017E} 
on the grounds of being trivial. 
We use this opportunity to include a sketch of proof.)
Assuming that the map $\psi(x,w)$ is tangent to $\psi_0(x,w)=w$ 
to order $k\ge 1$ along $w=0$, we must show that the same holds 
for the maps $a_\psi$ and $b_\psi$ in \eqref{eq:apsi}.
For simplicity, we consider the case when $w\in \C$ is a scalar variable;
a similar argument applies in general and we leave it to the reader.
We consider the Taylor expansions of our functions along $w=0$,
beginning with the case $k=1$. Thus, 
\[
	\psi(x,w)=h_1(x) w + O(|w|^2),\quad h_1\in \Lambda^r(C).
\]
Write 
\[
	c(x,w)          = \sum_{j=0}^\infty c_j(x)w^j, \quad 
	(\Acal c)(x,w) = \sum_{j=0}^\infty a_j(x)w^j, \quad
	(\Bcal c)(x,w) = \sum_{j=0}^\infty b_j(x)w^j, 
\]
where $c_j\in \Lambda^r_\Oscr(C)$, 
$a_j=\Acal c_j \in \Lambda^r_\Oscr(A)$, 
and $b_j=\Bcal c_j \in \Lambda^r_\Oscr(B)$ for all $j=0,1,\ldots$.
By the definition of $\Phi$ in \eqref{eq:Phi}, we have that 
\begin{eqnarray*}
	\Phi(\psi,c)(x,w) &=& 
	\psi\bigl(x,w+(\Acal c)(x,w)\bigr) - \bigl(w+(\Bcal c)(x,w)\bigr) \\
	&=&  h_1(x) \bigl(w+(\Acal c)(x,w)\bigr) - w - (\Bcal c)(x,w) + O(|w|^2)\\
	&=&  h_1(x) a_0(x) - b_0(x) + O(|w|) =0. 
\end{eqnarray*}
The equation $\Phi(\psi,c)=0$ gives $h_1a_0 -b_0=0$.
From $\Acal c-\Bcal c=c$ we get $a_0-b_0=c_0$, and hence 
$a_0(1-h_1)=c_0$ on $C$. Note that 
$\|a_0\|_{\Lambda^r(A)} \le const \|c_0\|_{\Lambda^r(C)}$. If 
$\psi(x,w)$ is close to $w$ then $h_1$ is close to $1$, which 
makes $\|a_0(1-h_1)\|_{\Lambda^r(C)}$ smaller than 
$\|c_0\|_{\Lambda^r(C)}$. In view of $a_0(1-h_1)=c_0$  
this forces $c_0=0$, and hence also $a_0=0$ and $b_0=0$, 
thereby proving the claim for $k=1$.

Assume now that $k>1$ and 
\[
	\psi(x,w)=w+ h_k(x) w^k + O(|w|^{k+1}),\quad 
	h_k\in\Lambda^r_\Oscr(C,\C^N). 
\]
The equation $\Phi(\psi,c)=0$ gives 
\begin{eqnarray*}
	\Phi(\psi,c)(x,w) &=& 
	\psi\bigl(x,w+(\Acal c)(x,w)\bigr) - \bigl(w+(\Bcal c)(x,w)\bigr) \\
	&=&  w+(\Acal c)(x,w) + h_k(x)(w+(\Acal c)(x,w))^k  
		- w - (\Bcal c)(x,w) + O(|w|^{k+1}) \\
	&=&  c(x,w) + h_k(x)(w+(\Acal c)(x,w))^k  + O(|w|^{k+1}) =0. 
\end{eqnarray*}
By the proof for $k=1$ we have $|(\Acal c)(x,w)|=O(|w|)$.
Therefore, $(w+(\Acal c)(x,w))^k=O(|w|^k)$ and the above identity
implies $c(x,w)=O(|w|^k)$. Thus, $c_j=0$ for $j=1,\ldots, k-1$, 
which also implies $a_j=0$ and $b_j=0$ for $j=1,\ldots, k-1$. 
\end{proof}

%
%
Assume now that $D$ is a compact strongly 
pseudoconvex domain with smooth boundary in a Stein manifold $X$,
$Y$ is a complex manifold, and $h:Z\to D$ is a topological fibre bundle
with fibre $Y$ which is holomorphic on $\mathring D=D\setminus bD$.
For $x\in D$ write $Z_x=h^{-1}(x)\cong Y$. The set
\[
	VT_z(Z) = T_z Z_{\pi(z)},\quad z\in Z 
\]
is called the vertical tangent space to $Z$ at $z$, 
and the vector bundle $VT(Z)\to Z$ with fibres $VT_z(Z)$ 
is the vertical tangent bundle of $(Z,h)$. 
When $h$ is differentiable in the variable $x\in D$, we have
$
	VT_z(Z) = \ker dh_z, \ z\in Z.
$
If the bundle $h:Z\to D$ is of H\"older--Zygmund class $\Lambda^r_\Oscr(D)$ 
(see Subsect.\ \ref{ss:Lr-bundles}) then $VT(Z)$ is of local 
class $\Lambda^r_\Oscr(Z)$, 
and for every section $f:D\to Z$ of class $\Lambda^r_\Oscr(D)$ the pullback
bundle $f^*VT(Z)\to D$ is a vector bundle of class $\Lambda^r_\Oscr(D)$.

We recall the notion of (local) dominating sprays of sections;
see \cite[Definition 5.9.1, p.\ 239]{Forstneric2017E} 
for sprays over open domains and \cite[Sect.\ 8.10]{Forstneric2017E} 
for sprays over compact domains in a complex manifold. 

%
%
\begin{definition}\label{def:fibre-spray}
Let $h:Z\to D$ be a topological fibre bundle which is holomorphic on $\mathring D$. 
A holomorphic spray of sections of $h:Z\to D$  
is a continuous map $f : D\times W\to Z$ which is 
holomorphic on $\mathring D\times W$, where $0\in W\subset \C^N$ 
is an open convex set, such that 
\begin{equation} \label{eq:spray}
        h(f(x,w)) = x\quad \text{for $x \in D$ and $w\in W$}.
\end{equation}
The section $f_0=f(\cdotp,0):D\to Z$ is called the core of $f$.
The spray $f$ is dominating on a subset $K \subset D$ 
if the vertical derivative of $f$ at $w=0$, given by
\[
    	\di_w|_{w=0} f(x,w) :  T_0 \C^N\cong \C^N  \lra  VT_{f(x,0)} Z,
\]
is surjective for all $x\in K$. We say that $f$ is dominating if this 
holds for all $x\in D$. 
\end{definition}

We shall consider fibre bundles $h:Z\to D$ 
and sprays $f:D\times W\to Z$ of classes $\Lambda^r_\Oscr(D)$ 
for $r>0$. The main ingredient in the proof of Theorem \ref{th:main}
is the following gluing lemma for such sprays. Its analogue
in spaces $\Ascr^r(D)$, $r\in\Z_+$, was first proved in 
\cite[Proposition 4.3]{DrinovecForstneric2007DMJ}. See also 
the more concise proofs in \cite[Proposition 2.4]{DrinovecForstneric2008FM} 
and \cite[Proposition 5.9.2]{Forstneric2017E}. 

%
%
%
%
\begin{lemma}[Gluing sprays of class $\Lambda^r_\Oscr$]
\label{lem:gluing}
Let $(A,B)$ be a Cartan pair in a Stein manifold
(see Def.\ \ref{def:Cartan-pair}) and set $C=A\cap B$, $D=A\cup B$.
Assume that $h:Z\to D$ is a fibre bundle of class $\Lambda^r_\Oscr(D)$.
Given a convex domain $0\in W_0\subset\C^N$ and a spray of sections 
$f:A\times W_0\to Z$ of class $\Lambda^r_\Oscr(A\times W_0)$ 
which is dominating on $C$, there is a ball $W\subset\C^N$ 
with $0\in W \subset W_0$ satisfying the following conditions. 
\begin{enumerate}[\rm (a)]
\item 
For every holomorphic spray of sections
$g : B \times W_0 \to Z$ of class $\Lambda^r_\Oscr(B \times W_0)$
which is sufficiently close to $f$ in $\Lambda^r(C \times W_0)$
there exists a spray of sections $f' : D \times W \to Z$
of class $\Lambda^r_\Oscr(D\times W)$, close to $f$ in 
$\Lambda^r(A\times W)$ (depending on the $\Lambda^r$
distance between $f$ and $g$ on $C\times W_0$), 
whose core $f'_0$ is homotopic to $f_0=f(\cdotp,0)$ on $A$
and is homotopic to $g_0=g(\cdotp,0)$ on $B$.
\item
If $f$ and $g$ agree to order $k\in \N$ along $C\times\{0\}$, 
then $f'$ can be chosen to agree to order $k$ with $f$ 
along $A\times\{0\}$ and with $g$ along $B \times\{0\}$.
\end{enumerate}
The analogous result holds for families of sprays depending
continuously on a parameter $p\in P$ in a compact Hausdorff space.
If in addition the sprays $f$ and $g$ agree over $C$ for values of $p$ 
in a closed subset $Q\subset P$ which is a strong 
neighbourhood deformation retract, 
then $f'$ can be chosen to agree with $f$ and $g$ 
on $A$ and $B$, respectively, for $p\in Q$.
\end{lemma}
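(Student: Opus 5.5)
We follow the standard scheme of \cite[Proposition 5.9.2]{Forstneric2017E}: construct a transition map between the two sprays over $C$, split it by Lemma \ref{lem:splitting}, and glue. The only new inputs are Theorem \ref{th:complement}, Theorem \ref{th:dibar} and Lemma \ref{lem:splitting}.

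\emph{Step 1: a splitting of the vertical tangent bundle.} Since $f$ is dominating on $C$, the differential $\di_w|_{w=0}f$ defines a vector bundle epimorphism $C\times\C^N\to f_0^*VT(Z)|_C$ of class $\Lambda^r_\Oscr(C)$. Its kernel $E'$ is a subbundle of class $\Lambda^r_\Oscr(C)$. By Theorem \ref{th:complement}, applied on the strongly pseudoconvex domain $C$, there is a complementary subbundle $E''\subset C\times\C^N$ of class $\Lambda^r_\Oscr(C)$. Then $\di_w|_{w=0}f$ restricts to an isomorphism $E''\to f_0^*VT(Z)|_C$.

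\emph{Step 2: the transition map.} Write points of $C\times\C^N$ as $(x,w'+w'')$ with $w'\in E'_x$ and $w''\in E''_x$. By the implicit function theorem with parameters, in the Banach spaces $C_\epsilon$ of the proof of Lemma \ref{lem:splitting}, we solve for $g$ sufficiently close to $f$ on $C\times W_0$ the equation
\[
	f\bigl(x,w'+\phi(x,w)\bigr)=g(x,w),\qquad \phi(x,w)\in E''_x ,
\]
for $(x,w)\in C\times W_1$, where $W_1\subset W_0$ is a smaller ball. This is possible because $f(x,w'+\cdot)$ restricted to $E''_x$ is a local biholomorphism near $0$ onto a neighbourhood in the fibre $Z_x$. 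The resulting $\gamma(x,w)=(x,w'+\phi(x,w))$ has the form \eqref{eq:gamma}, is of class $\Lambda^r_\Oscr(C\times W_1)$, satisfies $f\circ\gamma=g$, and is $\Lambda^r$-close to $\Id$, with the distance controlled by $\|f-g\|_{\Lambda^r(C\times W_0)}$.

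Checking the regularity is the main obstacle. One has to show that the nonlinear operator $(f,g)\mapsto\phi$ is well defined and smooth between Zygmund spaces. Local charts of the $\Lambda^r$ bundle $Z$ over $C$ are only $\Lambda^r$ in $x$, but they are holomorphic in the fibre variable. In those charts the map becomes a composition of power series in $w$ with $\Lambda^r(C)$ coefficients and holomorphic maps in the fibre. We would therefore invoke Lemma \ref{lem:composition} and Remark \ref{rem:composition}, together with the Cauchy-estimate argument used for $C_\epsilon$ in Lemma \ref{lem:splitting}, to get smoothness of the relevant Nemytskii operators. We would then apply the Banach implicit function theorem exactly as in \eqref{eq:Phi}.

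\emph{Step 3: splitting and gluing.} Apply Lemma \ref{lem:splitting} to $\gamma$, with $W_1$ in place of $W$. This gives $\alpha_\gamma$ on $A\times\epsilon W_1$ and $\beta_\gamma$ on $B\times\epsilon W_1$ with $\gamma\circ\alpha_\gamma=\beta_\gamma$ on $C\times\epsilon W_1$. Then on $C$,
\[
	f\circ\alpha_\gamma = f\circ\gamma^{-1}\circ\beta_\gamma = g\circ\gamma^{-1}\ldots
\]
More precisely, $f\circ\gamma=g$ yields $f\circ\alpha_\gamma$ and $g\circ\beta_\gamma^{-1}\circ\ldots$; to match the standard convention we set $f':=f\circ\gamma\circ\alpha_\gamma$, which gives $f'=f\circ\beta_\gamma$. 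With $\gamma$ oriented so that $g=f\circ\gamma$, the pieces $f\circ\alpha_\gamma$ on $A$ and $g\circ\beta_\gamma^{\prime}$ on $B$ must agree on $C$; if necessary we replace $\gamma$ by its inverse, which is also of class $\Lambda^r_\Oscr$ and close to $\Id$. Define $f'=f\circ\alpha_\gamma$ on $A\times W$ and $f'=g\circ\beta_\gamma$ on $B\times W$, with $W=\epsilon W_1$. These pieces agree on $C\times W$, so $f'$ is a spray of class $\Lambda^r_\Oscr(D\times W)$; membership in the class is a local property by Definition \ref{def:local}.

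Closeness of $f'$ to $f$ on $A$ follows from $\alpha_\gamma$ being close to $\Id$ and depending smoothly on $\gamma$. The homotopies of cores come from the paths $t\alpha_{\gamma}+(1-t)\Id$ and $t\beta_\gamma+(1-t)\Id$, whose sprays keep $w$ in $W_0$.

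For (b), if $f$ and $g$ agree to order $k$ then $\phi(x,w)=O(|w|^{k+1})$, so $\gamma$ agrees with $\Id$ to order $k$. The last clause of Lemma \ref{lem:splitting} then transfers this to $\alpha_\gamma$ and $\beta_\gamma$, and hence to $f'$.

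For the parametric version, all constructions are smooth in $(f,g)$, so they depend continuously on $p$. When $f=g$ over $C$ for $p\in Q$, we get $\gamma=\Id$ and hence $\alpha_\gamma=\beta_\gamma=\Id$ there.
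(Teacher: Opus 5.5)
Your proposal follows the paper's route. You complement $E'=\ker\di_w f|_{w=0}$ in $C\times\C^N$ by Theorem \ref{th:complement}, build a transition map as in \cite[Lemma 5.9.3]{Forstneric2017E}, split it with Lemma \ref{lem:splitting}, and amalgamate. Three points need fixing.

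First, the orientation of $\gamma$ in Step 2 is the wrong one for gluing. From $f\circ\gamma=g$ and $\gamma\circ\alpha_\gamma=\beta_\gamma$ you only get $f\circ\beta_\gamma=g\circ\alpha_\gamma$ on $C$. This pairs $f$ with the map defined over $B$ and $g$ with the one defined over $A$, so your intermediate attempts in Step 3 cannot glue, and replacing $\gamma$ by $\gamma^{-1}$ is forced rather than optional. The paper takes the transition map with $f=g\circ\gamma$ on $C\times W$, so that $f\circ\alpha=g\circ\gamma\circ\alpha=g\circ\beta$ on $C\times\epsilon W$.

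You can obtain this orientation directly by solving $g(x,w'+\phi(x,w))=f(x,w)$ with $\phi(x,w)\in E''_x$. For $g$ close enough to $f$ on $C\times W_0$, $\di_w g|_{w=0}$ restricted to $E''$ is still an isomorphism onto $g_0^*VT(Z)|_C$, so the same implicit function argument applies. If you keep the inversion instead, you must check, by a further implicit function argument in the spaces $C_\epsilon$, two things: that $\gamma^{-1}$ is of class $\Lambda^r_\Oscr$ on a smaller ball, and that it agrees with $\Id$ to order $k$ whenever $\gamma$ does.

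Second, in (b) the correct conclusion is $\phi(x,w)-w''=O(|w|^{k+1})$, not $\phi(x,w)=O(|w|^{k+1})$; the latter would make $\gamma$ far from $\Id$.

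Third, in the parametric case it is not true that all constructions are smooth in $(f,g)$. The complement $E''$ is produced by Theorem \ref{th:complement} and is not a function of $f$. To make $\gamma_p$ continuous in $p$ you need a continuous family $E''_p$ complementary to $E'_p=\ker\di_w f_p|_{w=0}$. This is exactly what Theorem \ref{th:complement-par} supplies, and the paper invokes it for this purpose. No relative condition on $E''_p$ is needed for $p\in Q$, since $f_p=g_p$ on $C$ forces $\gamma_p=\Id$ for any choice of complement. Your conclusion $\alpha_\gamma=\beta_\gamma=\Id$ there is therefore fine.
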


\begin{proof}[Sketch of proof]
The proof is analogous to that of 
\cite[Proposition 5.9.2, p.\ 240]{Forstneric2017E},
which applies to function spaces $\Ascr(D)$.
As noted in \cite[Remark 5.8.3 (B), p.\ 238]{Forstneric2017E},
the same proof applies in any Banach spaces on which there is a
linear bounded solution operator for the $\dibar$-equation 
on the level of $(0,1)$-forms. By Theorem \ref{th:dibar}, 
this holds for the Zygmund spaces $\Lambda^r$ with $r>0$. 
Nevertheless, some steps must be adjusted using results in  
Sections \ref{sec:approximation} and \ref{sec:vectorbundles}.
We indicate the necessary changes. 

The first step is to find a ball $0\in W\Subset W_0$ and a map 
$\gamma:C\times W\to C\times \C^N$ of the form 
\[ 
	\gamma(x,w) = (x,w + c(x,w)), \quad x\in C,\ w\in W,
\] 
with $c\in \Lambda^r_\Oscr(C\times W)$ close to $0$
(depending on the $\Lambda^r(C\times W_0)$ distance 
between $f$ and $g$), such that 
\begin{equation}\label{eq:fggama} 
	 f = g\circ \gamma \quad \text{holds on $C \times W$}.
\end{equation}
Such a map $\gamma$ is called a transition map between the sprays $f$ and $g$. 
(For maps of class $\Ascr^r$ with $r\in \Z_+$, 
a solution is given by \cite[Lemma 5.9.3, p.\ 240]{Forstneric2017E}.)  
In the process of constructing $\gamma$, 
we must split the trivial bundle $C\times \C^N$
by the subbundle $E'=\ker \di_w f|_{w=0}$,
the kernel of the vertical derivative of the spray $f$ at $w=0$. 
In our case, $E'$ is of class $\Lambda^r_\Oscr(C)$, 
and a splitting is furnished by Theorem \ref{th:complement} in the 
basic case and by Theorem \ref{th:complement-par} in the parametric case. 
The proof of \cite[Lemma 5.9.3]{Forstneric2017E} 
then applies without further changes. 

In the second step, pick a number $0<\epsilon<1$
and apply Lemma \ref{lem:splitting} to split $\gamma$ in the form 
\[
   \gamma\circ\alpha = \beta
   \ \ \text{on $C\times \epsilon W$},
\]
with $\alpha$ and $\beta$ as in \eqref{eq:alphabeta}. 
From this and \eqref{eq:fggama} it follows that 
\begin{equation}\label{eq:falphagbeta}
	f\circ \alpha = g\circ \beta \ \ \text{on $C\times \epsilon W$}.
\end{equation}
Hence, the two sides of the above equation amalgamate 
to a spray $f':D\times \epsilon W\to Z|_D$ satisfying the lemma. 
The proof in the parametric case follows the same scheme.
\end{proof}

%
%
%
%
\section{Proof of Theorem \ref{th:main}}\label{sec:proofmain}

We begin by explaining the proof of the basic
(nonparametric) case of Theorem \ref{th:main}.
The parametric version in Theorem \ref{th:mainpar}
includes the 1-parametric case, stated in Theorem \ref{th:main}. 

Thus, our task is to prove that any continuous section 
$f_0\in \Gamma(\overline \Omega,Z)$ of the fibre bundle 
$h:Z\to \overline \Omega$ of class $\Lambda^r_\Oscr(\overline \Omega)$ 
with an Oka fibre is homotopic to a section 
$f\in \Gamma^r_\Oscr(\overline \Omega,Z)$ which is holomorphic
on $\Omega$.

Choose a smooth strongly plurisubharmonic
function $\rho$ on a neighbourhood $U\subset X$ of $\overline\Omega$
such that 
\[
	\Omega=\{x\in U:\rho(x)<0\},\quad  
	d\rho_x\ne 0 \ \ \text{for every $x\in b\Omega =\{\rho=0\}$.}
\]
Pick $c<0$ such that $\rho$ has no critical values on the interval $[c,0]$.
Set 
\[
	A_0=\{\rho\le c\},\qquad 
	A'=\{\rho\le 0\}=\overline \Omega.
\]
Given an open cover $\Ucal=\{U_j\}$ of $\overline{A'\setminus A}$ 
consisting of holomorphic coordinate charts $U_j\subset X$,  
\cite[Lemma 5.10.3]{Forstneric2017E} gives compact, smoothly bounded, 
strongly pseudoconvex domains 
\begin{equation}\label{eq:sequenceAbis}
	A_0 \subset A_1 \subset \cdots \subset A_m=A'=\overline \Omega
\end{equation}
for some $m\in\N$ such that for every $k=0,1,\ldots,m-1$ we have 
$A_{k+1}=A_k\cup B_k$, where $(A_k,B_k)$ is a special Cartan pair
(see Definition \ref{def:Cartan-pair}) and $B_k\subset U_j$ for some $j=j(k)$.  
Hence, we may assume that the bundle $Z\to \overline \Omega$
is trivial over $B_k$ for all $k=0,1,\ldots,m-1$.

By the Oka principle on open Stein manifolds 
\cite[Theorem 5.4.4]{Forstneric2017E}, we may assume that 
$f_0$ is holomorphic on a neighbourhood of $A_0$. 
It suffices to show that for every special 
Cartan pair $(A,B)$ in $\overline \Omega$ such that the bundle
$Z\to \overline \Omega$ is trivial over $B$, 
we can approximate a section $f\in \Gamma^r_{\Oscr}(A,Z)$ as closely 
as desired by sections $\tilde f\in \Gamma^r_{\Oscr}(D,Z)$, 
where $D=A\cup B$. The theorem then follows by a finite induction,
using the sequence \eqref{eq:sequenceAbis} and starting with 
the section $f_0$ on $A_0$. The existence of a homotopy from $f_0$ 
to $f=f_m$ is obvious since there is no change of topology from $A_0$
to $A'=\overline \Omega$.

Fix a special Cartan pair $(A,B)$ and a section $f\in \Gamma^r_{\Oscr}(A,Z)$. 
The construction of a section $\tilde f \in \Gamma^r_{\Oscr}(D,Z)$ 
approximating $f$ in $\Gamma^r_{\Oscr}(A,Z)$ proceeds in three steps.
\begin{enumerate}[\rm (1)]
\item We embed $f$ as the core of a dominating spray of sections
$F:A\times W\to Z$ of class $\Lambda^r_{\Oscr}(A\times W,Z)$
(see Definition \ref{def:fibre-spray}), where $0\in W\subset \C^N$ 
is a ball. The construction of $F$ is explained in the sequel. 
\item 
Set $C=A\cap B$ and fix a number $\delta\in (0,1)$. 
Since the sets $C\subset B$ are convex in a local holomorphic coordinate
on a neighbourhood of $B$, the bundle $h$ is trivial over $B$, 
and the fibre $Y$ of $h$ is an Oka manifold, 
we can use Theorem \ref{th:approximation} to approximate $F$ 
as closely as desired in $\Lambda^r(C\times \delta W)$
by holomorphic sprays of sections $G:B\times \delta W \to Z$.
\item 
Assuming that the approximation in the previous step 
is close enough, we apply Lemma \ref{lem:gluing} to glue $F$ 
and $G$ into a spray $\wt F\in \Gamma^r_\Oscr(D\times \delta' W)$ 
for some $\delta'\in (0,\delta)$ which approximates $F$ 
in $\Lambda^r(A\times \delta' W)$. The section
$\tilde f:= \wt F(\cdotp,0) \in \Gamma^r_{\Oscr}(D,Z)$ 
then approximates $f$ in $\Gamma^r_{\Oscr}(A,Z)$. 
\end{enumerate}

This completes the proof, modulo the construction of a 
dominating spray in step (1). For this, we follow 
\cite[proof of Proposition 8.10.2, p.\ 388]{Forstneric2017E}
(the original reference is \cite[Proposition 4.1]{DrinovecForstneric2008FM}), 
where a result of this kind was proved for classes $\Ascr^r$ with $r\in\Z_+$.
Here is a sketch.

By Remark \ref{rem:TheoremA}, there is a vector bundle epimorphism 
\[
	L:A\times \C^N\longrightarrow f_0^* VT(Z)
\] 
of class $\Lambda^r_\Oscr(A\times\C^N)$ for some $N\in \N$.

\noindent {\bf Claim:} Given $L$ as above, there is a dominating spray 
$F:A\times W\to Z$ of class $\Lambda^r_{\Oscr}(A\times W,Z)$,
where $0\in W\subset \C^N$ is a ball, such that 
$F(\cdotp,0)=f_0$ and 
\begin{equation}\label{eq:L}
    	\di_w|_{w=0} F(x,w) = L(x,\cdotp): \C^N  \to  VT_{f(x,0)} Z
	\ \ \text{for all $x\in A$}.
\end{equation}

To see this, choose a sequence of compact, smoothly bounded, 
strongly pseudoconvex domains 
\[ 
	A_0 \subset A_1 \subset \cdots \subset A_m=A
\]
as in \eqref{eq:sequenceAbis} such that $A_0$ is contained in the interior
of $A$ and for every $k=0,1,\ldots,m-1$ we have 
$A_{k+1}=A_k\cup B_k$, where $(A_k,B_k)$ is a special Cartan pair
and the bundle $Z$ is trivial over $B_k$.  
The existence of a holomorphic spray $F_0:A_0\times W_0\to Z|_{A_0}$
with the core $F_0(\cdotp,0)=f_0|_{A_0}$ and satisfying \eqref{eq:L} 
is standard; see \cite[Proposition 8.10.2]{Forstneric2017E}. 
We inductively find sprays $F_k:A_k\times W_k \to Z|_{A_k}$ 
of classes $\Lambda^r_{\Oscr}(A_k\times W_k,Z)$ with the
core $f_0$ and satisfying \eqref{eq:L} for $x\in A_k$ $(k=1,\ldots,m)$, 
where $W_0\supset W_1\supset\cdots\supset W_m$ 
are balls in $\C^N$ centred at $0$. Every induction 
step is of the same kind and proceeds as follows.

We first approximate $F_k$ over $C_k=A_k\cap B_k$ 
by a spray $G_k:B_k\times V_k \to Z|_{B_k}$ of class 
$\Lambda^r_{\Oscr}(B_k\times V_k,Z)$ satisfying
\eqref{eq:L} for points $x\in B_k$, where $0\in V_k\subset W_k$ 
is a smaller ball; see \cite[Lemma 8.10.3]{Forstneric2017E} and note
that its proof also applies in classes $\Lambda^r$. 
In particular, $F_k$ and $G_k$ agree along  
$C\times \{0\}$ to the second order. Next, we apply 
Lemma \ref{lem:gluing} to glue $F_k$ and $G_k$ 
into a spray $F_{k+1}$ over $A_{k+1}=A_k\cup B_k$
with the core $f_0$ and satisfying condition \eqref{eq:L} for all 
points $x\in A_{k+1}$.
This is done by first finding a map $\gamma_k$
of the form \eqref{eq:gamma} and of class 
$\Lambda^r_\Oscr(C_k\times V_k)$ which agrees with the identity
to the second order along $C_k\times\{0\}$ and satisfies 
$F_k\circ \gamma_k = G_k$ on $C_k\times V_k$; see \eqref{eq:fggama}.
(The ball $V_k$ is allowed to shrink. The construction of such a map 
$\gamma_k$ was explained in the proof of Lemma \ref{lem:gluing}.) 
Next, we apply Lemma \ref{lem:splitting} to 
find sprays $\alpha_k$ and $\beta_k$ over $A_k$ and $B_k$
such that $\alpha_k \circ \gamma_k =\beta_k$ 
(see \eqref{eq:splitspray}),
and $\alpha_k$ and $\beta_k$ agree with the identity to the second
order along $A_k\times \{0\}$ and $B_k\times \{0\}$,
respectively. As in \eqref{eq:falphagbeta}, this yields
the spray $F_{k+1}$ over $A_{k+1}=A_k\cup B_k$, 
defined by the identity
\[
	F_k \circ \alpha_k  = G_k \circ \beta_k  \ \ 
	\text{on $C_k\times \epsilon V_k$},
	\quad 0<\epsilon<1.
\]
By the construction, $F_{k+1}$ has the core $f_0$
and satisfies condition \eqref{eq:L} on the set $A_{k+1}$. 
Taking $W_{k+1}=\epsilon V_k$ completes the induction step 
and proves the Claim. 

This completes the proof of the basic case of Theorem \ref{th:main}.

%
%
We now state a general parametric version of Theorem \ref{th:main}.
Recall that $\Gamma(\overline \Omega,Z)$ denotes the space of
continuous sections of a topological fibre bundle $h:Z\to \overline \Omega$. 
If the bundle is of class $\Lambda^r_\Oscr(\overline \Omega)$ then 
$\Gamma^r_\Oscr(\overline \Omega,Z)$ denotes the space of
sections of the same class.

\begin{theorem}\label{th:mainpar}
Assume that $\Omega$ is as in Theorem \ref{th:main},
$r>0$, $h:Z\to \overline \Omega$ is a fibre bundle of class 
$\Lambda^r_\Oscr(\overline \Omega)$ with Oka fibre, 
$P$ is a compact Hausdorff space, and $Q$ is a closed subset 
of $P$ which is a strong neighbourhood deformation retract.
Let $f:P\to \Gamma(\overline \Omega,Z)$ be a continuous map
such that $f|_Q:Q\to \Gamma^r_\Oscr(\overline \Omega,Z)$.
Then there is a homotopy 
$f_s:P\to \Gamma(\overline \Omega,Z)$, $s\in [0,1]$,
which is fixed on $Q$ such that $f_0=f$ and 
$f_1:P\to  \Gamma^r_\Oscr(\overline \Omega,Z)$.
\end{theorem}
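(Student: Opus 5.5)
The plan is to run the basic case of Theorem \ref{th:main} with continuous dependence on $p\in P$ and with everything kept fixed for $p\in Q$. We use the same exhaustion \eqref{eq:sequenceAbis} $A_0\subset A_1\subset\cdots\subset A_m=\overline\Omega$ by special Cartan pairs $(A_k,B_k)$, with $Z$ trivial over each $B_k$.

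\textbf{Step 0 (preparation).} First we homotope $f$ on a neighbourhood of $A_0$. By the parametric Oka principle for sections of holomorphic bundles with Oka fibres over open Stein domains \cite[Theorem 5.4.4]{Forstneric2017E}, we may assume that $f(p)$ is holomorphic on a neighbourhood of $A_0$ for all $p\in P$. This homotopy can be kept fixed on $Q$, because $f|_Q$ is already holomorphic on $\Omega\supset A_0$. A cutoff on $\overline\Omega\setminus A_0$ along a collar lets us extend it to a homotopy of continuous sections over $\overline\Omega$.

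\textbf{Induction step.} Suppose we are given a continuous family $f_p\in\Gamma^r_\Oscr(A,Z)$, $p\in P$, which is already of class $\Gamma^r_\Oscr(\overline\Omega,Z)$ for $p\in Q$. The aim is to produce a continuous family $\tilde f_p\in\Gamma^r_\Oscr(D,Z)$ that approximates $f_p$ on $A$ and equals $f_p|_D$ for $p\in Q$. This is done in three moves.
\begin{enumerate}[\rm (i)]
\item Embed $f_p$ as the core of a continuous family of dominating sprays $F_p:A\times W\to Z$. For this we use the parametric Theorem A (Theorem \ref{th:TheoremApar}) applied to the continuous family $f_p^*VT(Z)$, which gives epimorphisms $L_p$. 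We then repeat the Claim's inductive construction with the parametric gluing Lemma \ref{lem:gluing}. For $p\in Q$ we may take $F_p$ defined over $\overline\Omega$ by applying the construction there first and then extending using the $Q$-relative statements.
\item Approximate $F_p$ on $C\times\delta W$ by sprays $G_p$ over $B$, continuous in $p$. Since $Z|_B\cong B\times Y$ with $Y$ Oka and $C\subset B$ convex, the parametric Oka property (the parametric form of CAP) combined with Theorem \ref{th:approximation-par} gives $G_p$ holomorphic on $B\times\delta W$. These can be chosen with $G_p=F_p$ for $p\in Q$, using that $Q$ is a strong neighbourhood deformation retract, as in the proof of Theorem \ref{th:approximation-par}.
\item Glue $F_p$ and $G_p$ with the parametric, $Q$-relative Lemma \ref{lem:gluing} and take cores.
\end{enumerate}
Extension to all of $\overline\Omega$ as continuous sections, and the homotopy over $A$ from $f_p$ to $\tilde f_p$, come from the closeness in the fibrewise convex structure near the graphs.

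\textbf{Assembling the homotopy.} At each stage $D=A_{k+1}$ deformation retracts onto $A_k$ in the sense of the exhaustion, since there are no critical points. So the approximations are connected to $f$ by homotopies of continuous sections of $Z\to\overline\Omega$. These homotopies are fixed on $Q$ and agree with the given sections outside a collar. Concatenating the $m$ steps gives $f_s$ with $f_1:P\to\Gamma^r_\Oscr(\overline\Omega,Z)$.

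\textbf{Main obstacle.} I expect step (ii) to be the hardest: approximating a whole $P$-family of sprays on $C$ by families holomorphic on $B$, relative to $Q$. Continuous dependence requires the parametric Oka property for maps to $Y$, i.e.\ the parametric version of CAP, which holds for Oka manifolds by \cite[Theorem 5.4.4]{Forstneric2017E}. That result has to be combined with Theorem \ref{th:approximation-par} to pass from $\Lambda^r_\Oscr(C)$ to $\Oscr(C)$. The difficulty is doing this while fixing the data on $Q$ and keeping the approximation in $\Lambda^r(C\times\delta W)$.

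I would handle it by first approximating in $\Oscr(C')$ on a slightly larger convex compact $C'$, using Cauchy estimates to get $\Lambda^r$ closeness on $C$. Then I would interpolate near $Q$ via a cutoff in $p$ and the fibrewise convex structure of a Stein neighbourhood of the graphs, as in the patching argument of Theorem \ref{th:approximation-par}.
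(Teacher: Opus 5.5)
Your outline is the paper's outline. The paper proves Theorem \ref{th:mainpar} only by saying that the scheme of Theorem \ref{th:main} should be run with the parametric tools: Theorems \ref{th:approximation-par}, \ref{th:TheoremApar}, \ref{th:complement-par} and the parametric forms of Lemmas \ref{lem:splitting} and \ref{lem:gluing}. Your Step 0, the $Q$-relative sprays and the assembly of the homotopy are a reasonable way to fill that in.

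The real difficulty is in your step (ii), and it is not the parametric or $Q$-relative bookkeeping. A map can be approximated in the $\Lambda^r$ \emph{norm} by maps that are holomorphic (hence smooth) near $C$ only if it lies in the little H\"older--Zygmund class, i.e.\ the $\Lambda^r$-closure of smooth maps. For example, $f(z)=(1-z_1)^{1/2}$ lies in $\Lambda^{1/2}_\Oscr(\overline{\mathbb B})$ on the unit ball of $\C^n$. Comparing values at $e_1$ and at $(1-t)e_1$ as $t\to 0$ shows that $\|f-g\|_{\Lambda^{1/2}(\overline{\mathbb B})}$ is bounded below by a positive constant for every $g\in\Cscr^1(\overline{\mathbb B})$. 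So neither the dilation argument behind Theorem \ref{th:approximation-par} nor your ``approximate in $\Oscr(C')$ and use Cauchy estimates'' gives sprays $G_p$ that are $\Lambda^r(C\times\delta W)$-close to $F_p$, unless $F_p$ (written in the trivialisation of $Z|_B$) is little along $bA\cap B$. If it is not, the transition map $\gamma$ with $F_p=G_p\circ\gamma$ is not $\Lambda^r$-close to $\Id$, and Lemma \ref{lem:splitting} cannot be applied.

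For the same reason, your induction step as formulated is false. Suppose $f_p\in\Gamma^r_\Oscr(A,Z)$ fails to be little at a point of $bA\cap\mathring D\subset\Omega$. Then every $\tilde f_p\in\Gamma^r_\Oscr(D,Z)$ is holomorphic, hence smooth, near that point, so $\|f_p-\tilde f_p\|_{\Lambda^r(A)}$ stays bounded away from $0$. To close the induction you must carry a stronger hypothesis.

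Along $bA\cap\Omega$, littleness is plausibly preserved: you start from holomorphic data, and the constructions in Lemmas \ref{lem:Cousin} and \ref{lem:splitting} respect the closed subspace of little maps. At points of $C\cap b\Omega$ the situation is different. A section built to be little in one $\Lambda^r$ chart of $Z$ is in general not little in the chart over $B$, because the transition maps of $Z$ are only of class $\Lambda^r$ up to $b\Omega$. There you would need an additional idea, for instance one of the following:
\begin{itemize}
\item keep the constructions near $b\Omega$ in compatible charts;
\item produce $G_p\in\Lambda^r_\Oscr(B\times\delta W)$ that carries the same non-little part, instead of holomorphic approximants.
\end{itemize}
This gap comes from Theorem \ref{th:approximation-par} and from the reduction at the start of the paper's proof of Theorem \ref{th:main}. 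Your proposal is therefore exactly as complete as the paper's sketch, and shares its weak point.
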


The proof follows the same scheme as that of Theorem \ref{th:main},  
using the parametric versions of the tools 
in Sections \ref{sec:vectorbundles} and \ref{sec:gluing}.
(See also \cite[Sect.\ 5.13]{Forstneric2017E}.) 
We leave the details to the reader.

%
%
%
%
\section{The Oka principle for vector bundles and principal
bundles of class $\Lambda^r_\Oscr$}
\label{sec:OPVB}

In this section, we prove Theorem \ref{th:OPVB}.
We follow \cite[proof of Theorem 5.3.1]{Forstneric2017E},
which is due to Grauert \cite{Grauert1958MA}. 
See also Cartan's exposition of Grauert's Oka principle in \cite{Cartan1958}.

\begin{proof}[Proof of Theorem \ref{th:OPVB}]
Denote by $G(m,N)$ the Grassmann manifold 
of all complex $m$-dimensional subspaces of $\C^N$.
A topological vector bundle $E \to \overline\Omega$ of rank $m$ is 
the pullback $f^*\U$ of the universal bundle $\U\to G(m,N)$ of rank $m$
by a continuous map $f:\overline\Omega\to G(m,N)$ for a suitable $N$.
(We take $N$ big enough such that $E$ embeds as a topological
vector subbundle of the trivial bundle $\overline\Omega \times \C^N$.)
Since $G(m,N)$ is a complex homogeneous manifold, and hence an 
Oka manifold by Grauert \cite{Grauert1957II}, 
Theorem \ref{th:main} shows that $f$ is 
homotopic to a map $F:\overline\Omega \to G(m,N)$ 
of class $\Lambda^r_\Oscr(\overline\Omega)$.
The pullback $F^*\U\to \overline\Omega$ is then a 
vector bundle of class $\Lambda^r_\Oscr(\overline\Omega)$
which is topologically isomorphic to $E\cong f^*\U$. 
This proves part (i) of the theorem.

To prove the second statement, let $E\to \overline\Omega$ and 
$E' \to \overline\Omega$ be vector bundles of class
$\Lambda^r_\Oscr(\overline\Omega)$ and rank $m$. 
There are an open cover $\{U_j\}$ of ${\overline\Omega}$ 
by smoothly bounded domains and vector bundle isomorphisms 
\[
	\theta_j: E|_{\overline U_j} \stackrel{\cong}{\lra} \overline U_j\times\C^m,
	\qquad
	\theta'_j: E'|_{\overline U_j} \stackrel{\cong}{\lra} 
	\overline U_j\times\C^m
\]
of class $\Lambda^r_\Oscr(\overline U_j)$. Set $U_{i,j}=U_i\cap U_j$. Let
\[
    g_{i,j} :  \overline U_{i,j} \to GL_m(\C), \qquad 
    g'_{i,j} : \overline U_{i,j} \to GL_m(\C)
\]
denote the fibrewise holomorphic transition maps of 
class $\Lambda^r_\Oscr(\overline U_{i,j})$ so that 
\[
    \theta_i \circ\theta_j^{-1}(x,v)= \bigl(x,g_{i,j}(x)v\bigr),
    \quad x \in \overline U_{i,j},\ v\in\C^m,
\]
and likewise for $E'$. A complex vector bundle isomorphism 
$\Phi : E \to E'$ is given by a collection of complex vector bundle 
isomorphisms 
\[ 
	\Phi_j: \overline U_j\times\C^m\to \overline U_j\times \C^m
\] 
of the form
\[
    \Phi_j(x,v)=\bigl(x,\phi_j(x)v\bigr),
    \quad x\in \overline U_j, \ v\in \C^m, 
\]
with $\phi_j(x)\in GL_m(\C)$ for $x\in \overline U_j$, 
satisfying the compatibility conditions
\begin{equation}\label{eq:bundle-P}
    \phi_i   = g'_{i,j}\phi_j g_{i,j}^{-1} 
    		= g'_{i,j} \phi_j g_{j,i} \quad {\rm on}\ \overline U_{i,j}.
\end{equation}
Let $h:Z\to {\overline\Omega}$ denote the fibre bundle of
class $\Lambda^r_\Oscr(\overline\Omega)$
with fibre $G=GL_m(\C)$ and transition maps \eqref{eq:bundle-P}. 
This means that $Z|_{\overline U_j}\cong \overline U_j\times G$ 
for each $j$, an element $(x,v) \in \overline U_j\times G$ 
for $x\in \overline U_{i,j}$ is identified with
$(x,v')\in \overline U_i\times G$ where $v'= g'_{i,j}(x)\, v \, g_{j,i}(x)$,
and no other identifications are made. 
A collection of maps $\phi_j : \overline  U_j\to G$ satisfying
conditions \eqref{eq:bundle-P} is then a section ${\overline\Omega}\to Z$. 
This shows that complex vector bundle isomorphisms $E\to E'$ 
correspond to sections of $Z\to {\overline\Omega}$, with 
isomorphisms of class $\Lambda^r_\Oscr(\overline\Omega)$
corresponding to sections of the same class. 
Hence, part (ii) follows from Theorem \ref{th:main}.
\end{proof}

Similarly one can prove the following analogue of 
\cite[Theorem 8.2.1]{Forstneric2017E} due to Grauert \cite{Grauert1958MA}.
In the proof, one should use the analogue of Lemma \ref{lem:Cartan} for an
arbitrary complex Lie group $G$; see Remark \ref{rem:Liegroup}.

\begin{theorem}\label{th:OPprincipalbundles}
Let $\Omega$ be as in Theorem \ref{th:OPVB}. 
For every complex Lie group $G$, the isomorphism classes of 
principal $G$ bundles on $\overline\Omega$ 
of class $\Lambda^r_\Oscr(\overline\Omega)$
are in bijective correspondence with the topological 
isomorphism classes of principal $G$ bundles.
\end{theorem}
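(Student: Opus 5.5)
The plan is to follow the proof of Theorem \ref{th:OPVB} and \cite[Theorem 8.2.1]{Forstneric2017E}. Both surjectivity and injectivity of the natural map from $\Lambda^r_\Oscr(\overline\Omega)$ isomorphism classes of principal $G$ bundles to topological ones will be reduced to Theorem \ref{th:main}, applied to suitable fibre bundles with fibres that are complex homogeneous manifolds, hence Oka by Grauert.

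\emph{Injectivity.} Let $P\to\overline\Omega$ and $P'\to\overline\Omega$ be principal $G$ bundles of class $\Lambda^r_\Oscr(\overline\Omega)$. Represent them by transition maps $g_{i,j},g'_{i,j}\in\Lambda^r_\Oscr(\overline U_{i,j},G)$ on a common finite cover by smoothly bounded sets. An isomorphism $P\to P'$ is a collection of maps $\phi_j:\overline U_j\to G$ satisfying $\phi_i=g'_{i,j}\phi_j g_{j,i}$, exactly as in \eqref{eq:bundle-P}. Such collections are the sections of the fibre bundle $Z\to\overline\Omega$ with fibre $G$ and these transition maps, which act on $G$ by left and right translations. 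Since $G$ acts by biholomorphisms on the fibre and the cocycles are of class $\Lambda^r_\Oscr$, the bundle $Z$ is of class $\Lambda^r_\Oscr(\overline\Omega)$ in the sense of Subsect.\ \ref{ss:Lr-bundles}. A topological isomorphism gives a continuous section, and Theorem \ref{th:main} deforms it to a section of class $\Lambda^r_\Oscr(\overline\Omega)$, which is the required isomorphism.

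\emph{Surjectivity.} This is the harder half, since a classifying space $BG$ is not a finite-dimensional complex manifold. I would try the following route first. Given a topological principal $G$ bundle $P_0$, its transition functions can be taken continuous. Alternatively, $P_0$ can be extended slightly to a neighbourhood of $\overline\Omega$, where Grauert's classical Oka principle on the Stein neighbourhood $U$ (which deformation retracts to $\overline\Omega$) gives a holomorphic principal $G$ bundle topologically isomorphic to it. Its restriction to $\overline\Omega$ is of class $\Lambda^r_\Oscr(\overline\Omega)$, because holomorphic maps on a neighbourhood restrict to smooth maps.

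To make the extension step honest, I would choose $U$ a sublevel set $\{\rho<c\}$ with no critical values of $\rho$ in $[0,c]$, so that $\overline\Omega\hookrightarrow U$ is a homotopy equivalence and topological bundles on $U$ correspond to those on $\overline\Omega$ by restriction. Then \cite[Theorem 8.2.1]{Forstneric2017E} applies on the Stein manifold $U$.

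The main obstacle is to check that this shortcut yields isomorphism classes matching the statement. One also has to verify that $Z$ in the injectivity part genuinely satisfies the hypotheses of Theorem \ref{th:main}. In particular, its transition maps must be $\Lambda^{r,loc}_\Oscr$ in $x$ and holomorphic in the fibre variable; this follows from Lemma \ref{lem:composition} and Remark \ref{rem:composition}. If a self-contained argument inside the $\Lambda^r$ category is preferred, one instead reruns the Cartan-pair induction of Theorem \ref{th:TheoremA}, using Lemma \ref{lem:Cartan} for $G$ as in Remark \ref{rem:Liegroup}. On each special Cartan pair, the transition function is then split on $C$ after a homotopy to the identity, which is available because $C$ is contractible.
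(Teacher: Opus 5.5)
Your proof is correct, and for the existence half it takes a different route from the paper. The injectivity half is the paper's argument: isomorphisms are the sections of the $\Lambda^r_\Oscr$ bundle with fibre $G$ and transition maps $v\mapsto g'_{i,j}\,v\,g_{j,i}$ as in \eqref{eq:bundle-P}, and Theorem \ref{th:main} applies because $G$ is homogeneous, hence Oka. For existence, the paper only says the result is proved ``similarly'' using the analogue of Lemma \ref{lem:Cartan} for $G$ (Remark \ref{rem:Liegroup}). This points to a Grauert-type Cartan-pair induction in the style of Theorems \ref{th:TheoremA} and \ref{th:main}: start from a holomorphic structure on an interior sublevel set $A_0$, then push it out to $b\Omega$ across convex bumps, splitting $G$-valued transition maps on each $C=A\cap B$ in class $\Lambda^r_\Oscr$. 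That is essentially the alternative you mention at the end. Your main route instead applies Grauert's classical theorem on a Stein neighbourhood $U=\{\rho<c\}$ of $\overline\Omega$ and restricts to $\overline\Omega$, so no gluing in class $\Lambda^r_\Oscr$ is needed. This is valid, and the ``obstacle'' you flag is not one: surjectivity only needs one $\Lambda^r_\Oscr$ representative in each topological class. Even a retraction $r:U\to\overline\Omega$ suffices, since $(r^*P_0)|_{\overline\Omega}=P_0$. Your argument is shorter and gives representatives that are holomorphic on a neighbourhood of $\overline\Omega$. The same shortcut would also prove Theorem \ref{th:OPVB}(i) without Theorem \ref{th:main}. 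The paper's in-category gluing stays parallel to the rest of its machinery. It is also the mechanism you would need for relative versions, where a structure of class $\Lambda^r_\Oscr$ is already prescribed up to the boundary and must be kept.
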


%
%
\begin{remark}\label{rem:Grassmann}
Grassmann manifolds $G(m,N)$ play a major role in the theory of 
complex vector bundles as the classifying spaces. Being projective, 
they are K\"ahler manifolds. An explicit formula
for a K\"ahler metric on $G(m,N)$ was given by Lu Qi-Keng in 
1963, see \cite{Lu-Qi-Keng1963I,Lu-Qi-Keng1963II}.
\end{remark}

%
%
%
%

\medskip \noindent
{\bf Acknowledgements.} 
Research was supported by the European Union 
(ERC Advanced grant HPDR, 101053085) and 
grants P1-0291 and N1-0237 from ARIS, Republic of Slovenia. 

I wish to thank Andrei Teleman for asking the question
which led to the main results of the paper (private communication,
January 2026), and for helpful communication regarding
the H\"older--Zygmund spaces. I also thank Xianghong Gong
for sharing with me his expertise on H\"older--Zygmund spaces.
A part of the background work on the paper originates in my 
joint papers \cite{DrinovecForstneric2007DMJ,DrinovecForstneric2008FM}
with Drinovec Drnov\v sek, whom I thank for this indirect contribution.
A part of the work was done during my visit to 
the University of Adelaide in February 2026. I wish to thank 
Finnur L\'arusson for the invitation and the mentioned 
institution for its hospitality.  

%
%

\end{document}